\documentclass{amsart}
\synctex=1
\usepackage{amsfonts}
\usepackage{amsmath}
\usepackage{amssymb}
\usepackage{amsthm}
\usepackage{esint}
\usepackage{yhmath}
\usepackage{enumerate}
\usepackage[noadjust]{cite}
\usepackage{accents}
\usepackage{mathtools}
\usepackage{thmtools}
\usepackage{thm-restate}

\newcommand{\AND}{\mathrm{\; and \;}}

\newcommand{\WHEN}{\text{ when }}

\newcommand{\sps}[1]{\left( #1 \right)}

\newcommand{\f}[2]{\tfrac{#1}{#2}}
\newcommand{\ff}[2]{\frac{#1}{#2}}
\newcommand{\n}[1]{\lVert #1 \rVert}
\newcommand{\ns}[1]{\left\lVert #1 \right\rVert}
\renewcommand{\a}[1]{\lvert #1 \rvert}
\newcommand{\as}[1]{\left\lvert #1 \right\rvert}
\newcommand{\eval}[1]{\left. #1 \right\rvert}
\ifdefined\stretchybydefault

\newcommand{\floor}[1]{\left \lfloor #1 \right \rfloor}

\else

\newcommand{\floor}[1]{\lfloor #1 \rfloor}

\fi
\newcommand{\tn}[1]{{\left\vert\kern-0.25ex\left\vert\kern-0.25ex\left\vert #1 
    \right\vert\kern-0.25ex\right\vert\kern-0.25ex\right\vert}}
\let\j\jap

\newcommand{\loc}{{\mathrm{loc}}}
\DeclareMathOperator*{\esssup}{ess\,sup}

\DeclareMathOperator{\Char}{char}

\DeclareMathOperator{\supp}{supp}

\DeclareMathOperator{\RealPart}{Re}
\renewcommand{\Re}{\RealPart}

\DeclareMathOperator{\curl}{curl}
\DeclareMathOperator{\grad}{grad}
\DeclareMathOperator{\divergence}{div}
\renewcommand{\div}{\divergence}

\newcommand{\R}{\mathbb{R}}

\newcommand{\C}{\mathbb{C}}

\newcommand{\g}{\mbox{\fontfamily{pzc}\selectfont g}}

\providecommand{\comment}[1]{}

\newcommand{\cp}{\circ}

\newcommand{\wc}{\rightharpoonup}

\renewcommand{\bar}{\overline}

\let\ggg\gg
\renewcommand{\ll}{\lesssim}
\renewcommand{\gg}{\gtrsim}
\let\al\alpha

\let\z\zeta
\let\g\gamma
\renewcommand{\k}{\kappa}
\let\d\delta
\let\e\epsilon

\let\th\theta
\let\Si\Sigma
\let\si\sigma
\let\ti\tilde
\let\ld\lambda
\let\lp\Delta
\let\na\nabla
\let\Ld\Lambda

\let\cd\cdot
\let\om\omega
\let\Om\Omega

\newcommand{\E}{\mathbb{E}}

\renewcommand{\Pr}{\mathbb P}

\newcommand{\pd}{\partial}
\newcommand{\pb}{\bar\partial}
\newcommand{\thnorm}[1]{|\kern-1pt|\kern-1pt| #1 |\kern-1pt|\kern-1pt|}
\theoremstyle{remark}
\newtheorem*{remark*}{Remark}
\theoremstyle{definition}
\newtheorem*{openproblem*}{Open Problem}
\newtheorem*{dfn*}{Definition}

\theoremstyle{plain}
\newtheorem*{theorem*}{Theorem}
\newtheorem*{proposition*}{Proposition}
\newtheorem*{conjecture*}{Conjecture}
\newtheorem{theorem}{Theorem}[section]

\newtheorem*{lemma*}{Lemma}
\newtheorem{lemma}[theorem]{Lemma}
\newtheorem*{corollary*}{Corollary}

\newcommand{\qq}{\quad\quad}

\let\we\wedge
\newcommand{\sm}{\setminus}

\binoppenalty=\maxdimen
\relpenalty=\maxdimen

\begin{document}
\title[Determination of an unbounded magnetic potential]{Unique determination of a magnetic Schr\"odinger operator with unbounded magnetic potential from boundary data}
\author{Boaz Haberman}
\thanks{This material is based upon work supported the National Science Foundation Graduate Research Fellowship under Grant No. DGE 1106400 and additionally by the NSF under Grant No. DMS-1440140 while the author was in residence at the Mathematical Sciences Research Institute in Berkeley, California, during the Fall 2015 semester.}
\begin{abstract}
We consider the Gel'fand-Calder\'on problem for a Schr\"odinger operator of the form $-(\na + iA)^2 + q$, defined on a ball $B$ in $\R^3$. We assume that the magnetic potential $A$ is small in $W^{s,3}$ for some $s>0$, and that the electric potential $q$ is in $W^{-1,3}$. We show that, under these assumptions, the magnetic field $\curl A$ and the potential $q$ are both determined by the Dirichlet-Neumann relation at the boundary $\pd B$. The assumption on $q$ is critical with respect to homogeneity, and the assumption on $A$ is nearly critical. Previous uniqueness theorems of this type have assumed either that both $A$ and $q$ are bounded or that $A$ is zero.
\end{abstract}
\maketitle
\section{Boundary data for Schr\"odinger operators}
Consider a Schr\"odinger Hamiltonian of the form 
\begin{align*}
L_{A,q}u &= -(\na + iA)^2u + q u.
\end{align*}
Here $A$ represents a magnetic vector potential and $q$ represents an electric scalar potential. 

Let $\Om \subset \R^n$ be an bounded open set. Define the Dirichlet-Neumann relation by
\[\Ld_{A,q}^\Om = \{(\eval{u}_{\pd \Om}, \eval{(\pd_\nu + i \nu \cd A) u}_{\pd \Om}): u \in H^1(B) \AND L_{A,q} u = 0\},\]
where $\nu$ is the outward unit normal to $\pd \Om$.  The Gel'fand-Calder\'on problem~\cite{Gelfand1954,Calderon1980} is to determine the magnetic field $\curl A$ and the electric potential $q$ from the Dirichlet-Neumann relation $\Ld_{A,q}$. This is impossible unless the problem has a uniqueness property, namely that $\Ld_{A,q}$ uniquely determines $\curl A$ and $q$.

We are interested in proving uniqueness under minimal {\em a priori} regularity assumptions on $A$ and $q$. To avoid unnecessary technical complications, we take $\Om$ to be a ball in $\R^3$ and assume that the coefficients $A$ and $q$ are supported away from $\pd \Om$. 

\begin{restatable}{maintheorem}{thetheorem}
\label{thetheorem}
Fix $s>0$. Let $B = B(0,1)$ be the unit ball in $\R^3$. Suppose that $A_i$ and $q_i$ are supported in the smaller ball $\f 1 2 B$. If, for each $i = 1,2$, the magnetic potential $A_i$ is small in the $W^{s,3}$ norm, the electric potential $q_i$ is in $W^{-1,3}$, and $\Ld_{A_1,q_1} = \Ld_{A_2,q_2}$, then $\curl A_1 = \curl A_2$ and $q_1 = q_2$.
\end{restatable}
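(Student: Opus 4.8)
The plan is to follow the Sylvester--Uhlmann strategy --- reduce to a bilinear integral identity, insert complex geometric optics (CGO) solutions, and pass to a high-frequency limit --- but executed in the low-regularity framework of Haberman--Tataru so that the scaling-critical hypotheses can be accommodated.

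I would first record the gauge invariance $\Ld_{A,q} = \Ld_{A+\na\phi,q}$ for any $\phi$ vanishing near $\pd B$; since each $A_i$ is supported in $\f12 B$ (hence extends by zero to $\R^3$), this both explains why only $\curl A$ and, after a gauge choice, $q$ can be recovered, and it will let me normalize $A_1-A_2$ at the end. Next, from $\Ld_{A_1,q_1} = \Ld_{A_2,q_2}$ and the $H^1$-solvability of $L_{A_i,q_i}u = 0$ (needed for the $\Ld$'s to be defined), I would derive the identity
\[
\int_B \Bigl[\, i(A_1-A_2)\cdot\bigl(u_1\overline{\na u_2} - \overline{u_2}\,\na u_1\bigr) + \bigl(|A_1|^2 - |A_2|^2 + q_1 - q_2\bigr)\, u_1\overline{u_2} \,\Bigr]\,dx = 0,
\]
valid whenever $L_{A_1,q_1}u_1 = 0$ in $B$ and $u_2$ solves the formal-adjoint equation for $(A_2,q_2)$; the pairing with $q_i\in W^{-1,3}$ makes sense because $u_1\overline{u_2}\in W^{1,3/2}_{\loc}$ and $W^{-1,3} = (W^{1,3/2}_0)^*$.

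The heart of the matter is the construction of CGO solutions $u_j = e^{x\cdot\z_j}(e^{\Phi_j} + r_j)$ with $\z_j\in\C^3$, $\z_j\cdot\z_j = 0$, $|\z_j| = \tau\to\infty$, and $\z_1 + \overline{\z_2} = i\xi$ for a given frequency $\xi$. Conjugating $L_{A_j,q_j}$ by $e^{x\cdot\z_j}$ yields $-\Delta - 2\z_j\cdot\na$ plus a perturbation whose dominant part is the first-order term $-2i(\z_j\cdot A_j)$, of size $\sim\tau\n{A_j}_{L^3}$ --- this is exactly critical against the $\dot X^{\pm1/2}_{\z_j}$ resolvent bounds and cannot be treated as a small perturbation. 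I would absorb it into the amplitude by choosing the phase $\Phi_j$ to solve the transport equation $\z_j\cdot\na\Phi_j = -i\,\z_j\cdot A_j$ (and analogously for $u_2$), whose solution is given by a $\bar\pd^{-1}$-type operator in the two complex directions determined by $\z_j$; the remaining terms (including $q_j\in W^{-1,3}$, $|A_j|^2$, and the commutators with $\Phi_j$) are then handled perturbatively, using the smallness of $\n{A_j}_{W^{s,3}}$ to close a Neumann series for $r_j$ in $\dot X^{1/2}_{\z_j}$. I expect the main obstacle to be exactly this step: showing that the transport operator maps $W^{s,3}$ boundedly into an $L^\infty\cap\dot X$-type space with bounds uniform in $\z_j$ --- this endpoint is false at $s=0$, which is the source of the ``nearly critical'' restriction $s>0$ --- and simultaneously arranging enough smoothing and smallness that the nonlinear remainder closes. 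Because $q_j$ is only in $W^{-1,3}$, the remainder cannot be made to decay for every $\z_j$; following Haberman--Tataru I would average over a $\tau$-ball of admissible parameters and pass to the limit along a sequence of good frequencies for which $\n{r_j}_{\dot X^{1/2}} \to 0$.

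Finally, I would insert the CGO solutions into the bilinear identity and let $\tau\to\infty$ along the good sequence. The $|A_j|^2$ and $q_j$ contributions are lower order, with $u_1\overline{u_2}\to e^{ix\cdot\xi}$ up to the transport corrections, while the first-order term, thanks to the transport equation for $\Phi_j$, collapses to a nonzero multiple of $\widehat{\curl(A_1-A_2)}(\xi)$; since the residual direction $\overline{\z_2}-\z_1$ can be rotated within $\xi^\perp$, this forces $\curl A_1 = \curl A_2$. Then $A_1 - A_2 = \na\phi$ with $\phi$ locally constant near $\pd B$, so by gauge invariance I may replace $A_2$ by $A_2 + \na\phi$ and assume $A_1 = A_2$; the identity reduces to $\int_B (q_1-q_2)\,u_1\overline{u_2}\,dx = 0$, and a second CGO limit (now with trivial transport phase) gives $\widehat{q_1-q_2}(\xi) = 0$ for every $\xi$, hence $q_1 = q_2$.
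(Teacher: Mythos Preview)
Your overall architecture---integral identity, CGO solutions with a transport-equation amplitude to absorb the $O(\tau)$ magnetic term, $\dot X^{1/2}_\z$ spaces, and averaging over parameters---matches the paper. The gap is in the endgame: you plan to ``pass to the limit along a sequence of good frequencies for which $\n{r_j}_{\dot X^{1/2}} \to 0$'', and this is precisely what does \emph{not} work once $A\neq 0$. With a nontrivial amplitude $a=e^{-i\phi}$, the right-hand side of the remainder equation contains terms such as $\lp(\chi a)$ and $A\cdot\na a$ whose $X^{-1/2}_\z$ norms are, after averaging, bounded by fixed constants depending on $\n{A}_{W^{s,3}}$ but do not decay as $\tau\to\infty$; so $\n{\psi}_{\dot X^{1/2}_\z}$ is only $O(1)$ on the good set, never $o(1)$. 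The paper says this explicitly: the compactness mechanism you are invoking requires the operator norm $\n{A}_{X^{1/2}_\z\to X^{-1/2}_\z}$ to decay, and it does not, even for smooth $A$.

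The paper's substitute is Banach--Alaoglu plus analyticity. For $\curl A$ one shows that the main term in the identity has size $\tau$ while the cross errors are $O(\tau^{1/2})$, so $\a{J(r,U)}\ll\tau^{-1/2}$ on a good set $\ti V_K\subset[2^K,2^{K^2}]\times O(3)$ of uniformly positive measure, where $J(r,U)=U(e_1+ie_2)\cdot(A_1-A_2)^\wedge(rUe_3)$. Since the good set varies with $\tau$ you cannot fix a direction and send $\tau\to\infty$; instead one takes a weak-$*$ limit of the $\tau$-averaged indicators $\eta_K$, obtains a nonzero $\eta$ with $\int\eta\,\a{J}=0$, and then analyticity of $(A_1-A_2)^\wedge$ propagates $J=0$ from a positive-measure set to all of $\R_+\times O(3)$. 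Your claim that ``the residual direction can be rotated within $\xi^\perp$'' presupposes access to all directions, which the averaging does not provide. Two further corrections: the averaging must be over $O(3)$ as well as over $\tau$---the $\pb_e^{-1}$ amplitude bounds (for $\n{\phi}_\infty$, $\n{\na\phi}_{L^2(B)}$, etc.) hold only in $L^p(O(3))$-mean over the frame, not uniformly---and in the second step the transport phase is not trivial: the $a_i$ still depend on the common $A$, the $\psi_i$ are still only bounded, and what actually tends to zero on average are the specific error quantities $\n{(q_1-q_2)a}_{X^{-1/2}_\z}$ and $\n{q_1-q_2}_{X^{1/2}_\z\to X^{-1/2}_\z}$, not $\n{\psi}$ itself; one then repeats the same Banach--Alaoglu\,/\,analyticity argument to conclude $q_1=q_2$.
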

One physical motivation for studying this problem comes from quantum mechanics. For compactly supported potentials, the map $\Ld_{A,q}$ contains the same information as the scattering matrix at a fixed energy level. The inverse scattering problem is to determine a localized (short-range) potential from observations made at spatial infinity. The inverse scattering problem can also be posed for potentials that are exponentially decreasing rather than compactly supported. 

Unique determination of a bounded electric potential $q$ from the Dirichlet-to-Neumann map in the absence of a magnetic potential was proven by Sylvester and Uhlmann~\cite{SylvesterUhlmann1987} (see also~\cite{NachmanSylvesterUhlmann1988}). The proof is based on a density argument using complex geometrical optics (CGO) solutions in the spirit of~\cite{Calderon1980}.

The Sylvester-Uhlmann method was adapted to the case of a nonzero magnetic potential in~\cite{Sun1993}, where uniqueness was proven for $A\in C^2$ and $q \in L^\infty$, subject the requirement that $\n{\curl A}_\infty$ be small. The basic method we use in this paper is the same as that in~\cite{Sun1993}; in particular, we retain a smallness condition on the magnetic potential.

For smooth $A$, the smallness condition was removed in~\cite{NakamuraSunUhlmann1995} using a pseudodifferential conjugation technique from~\cite{NakamuraUhlmann1994}. This was improved to $A \in C^1$ in~\cite{Tolmasky1998} using symbol smoothing. In~\cite{Panchenko2002}, it was shown that, by imposing a Coulomb gauge, the result of~\cite{Sun1993} could be extended to small Dini continuous $A$ (this includes the case where $A$ is $\al$-H\"older for some  $\al>0$). The smallness condition was removed in~\cite{Salo2004} using pseudodifferential conjugation. In~\cite{KrupchykUhlmann2014}, the Coulomb gauge condition and pseudodifferential conjugation were eliminated using an argument based on Carleman estimates with slightly convex weights, and uniqueness was proven for $A \in L^\infty$. Our result requires that $A$ is small and slightly more differentiable than in~\cite{KrupchykUhlmann2014}. On the other hand, we require much less integrability for $A$ and $q$, so that our conditions on $A$ and $q$ are much closer to being scale-invariant. It does not seem that the method in~\cite{KrupchykUhlmann2014} for removing the smallness condition on $A$ extends to the case of unbounded potentials. However, we believe that a pseudodifferential conjugation argument could be used to remove the smallness condition for the result in this paper.

Another approach to the problem in the spirit of~\cite{Faddeev1976} is based on the $\pb$ method of~\cite{BealsCoifman1985}. Using this approach, the inverse scattering problem for small $A$ and $q$ in $e^{-\g\j x} C^\infty$ was solved in~\cite{KhenkinNovikov1987}. Uniqueness for $A = 0$ and $q \in e^{-\g\j{x} } L^\infty$ was proven in~\cite{Novikov1994}. Uniqueness for $(A,q) \in e^{-\g\j{x}} C^\infty$ with no smallness condition was proven in~\cite{EskinRalston1995}. A proof of uniqueness for $A = 0$ and $q \in e^{-\g\j x} L^\infty$ using a density argument in the spirit of~\cite{SylvesterUhlmann1987} was given by~\cite{Melrose1995,UhlmannVasy}. This density argument was modified to include $A \in e^{-\g\j{x}} W^{1,\infty}$ by~\cite{PaivarintaSaloUhlmann2010}.

Since the Laplacian has units $(\text{length})^{-2}$, the $L^\infty$ norms of $A$ and $q$ are not dimensionless quantities. This is undesirable from a physical point of view. For example, assuming that $A$ and $q$ are bounded excludes even subcritical potentials with $\a{x}^{-1}$ singularities at the origin (for example, a localized Coulomb-type potential).  A scale-invariant assumption in $n$ dimensions is that $A$ be in $L^n$ and that $q$ be in $L^{n/2}$ or $W^{-1,n}$ (by Sobolev embedding, $L^{n/2} \subset W^{-1,n}$). 

The Sylvester-Uhlmann argument can be adapted to the case of unbounded potentials by using $L^p$ Carleman estimates, which are analogous to Strichartz estimates~\cite{Strichartz1977} for dispersive equations. The $L^p$ Carleman estimates originate in the theory of unique continuation. They are based on Fourier restriction theorems (in particular, the Stein-Tomas theorem~\cite{Stein1993,Tomas1975} and its variants). This connection was first noticed in this context by~\cite{Hormander1983}, and was further developed in~\cite{Jerison1986,KenigRuizSogge1987,ChanilloSawyer1990,ChiarenzaRuiz1991,RuizVega1991,KochTataru2005}). 

Chanillo~\cite{Chanillo1990}, using the weighted inequalities of~\cite{ChanilloSawyer1990}, proved uniqueness in the inverse boundary value problem for $A = 0$ and compactly supported $q$  with small norm in the scale-invariant Fefferman-Phong classes $F_{>(n-1)/2}$ (including, in particular, potentials of small weak $L^{n/2}$ norm). Chanillo's paper also includes an argument of Jerison and Kenig proving uniqueness for $q \in L^{n/2+}$ with no smallness condition. This was extended to include the scale-invariant case $q \in L^{n/2}$ by Lavine and Nachman (see~\cite{DosSantosFerreiraKenigSalo2013} for details). 

A closely-related problem is Calder\'on's problem, which is to recover the coefficient in the equation $\div (\g \na u) = 0$ from the Dirichlet-to-Neumann map $\Ld_\g$. In~\cite{SylvesterUhlmann1987}, this problem is reduced to the problem of recovering a Schr\"odinger potential $q$, where $q = \g^{-1/2} \lp \g$. Unless $\g$ has two derivatives, the potential $q$ will end up having negative regularity. 

In~\cite{Brown1996,PaivarintaPanchenkoUhlmann2003,BrownTorres2003} it was shown that the Sylvester-Uhlmann argument carries through for conductivities with $3/2$ derivatives. In~\cite{HabermanTataru2013}, the author and Tataru showed uniqueness for $\g \in C^1$ or $\g$ with small Lipschitz norm using an averaging argument. In~\cite{NguyenSpirn2014}, a more involved averaging argument was used to prove uniqueness in three dimensions for $\g \in H^{3/2+}$. In~\cite{Haberman2015a}, the author used arguments similar to those of~\cite{NguyenSpirn2014}, combined with the $L^p$ Carleman estimates of~\cite{KenigRuizSogge1987}, to show uniqueness for $\g \in W^{1,n}$ in dimensions $n=3,4$. This corresponds to recovering a Schr\"odinger potential $q \in W^{-1,n}$.

In two dimensions, the problem has a fairly different character, and we refer the reader to~\cite{SylvesterUhlmann1986,Nachman1996,BrownUhlmann1997,AstalaPaivarinta2006,KhenkinNovikov1987,Novikov1992,Sun1993a,Bukhgeim2008,Blasten2011,ImanuvilovYamamoto2012,GuillarmouSaloTzou2011,Lai2011}.

The main contribution of this paper is in the construction of CGO solutions. These are solutions to the Schr\"odinger equation $L_{A,q} u = 0$ of the form $u = e^{x \cd \z}(a + \psi)$. To construct such solutions, we need to understand the conjugated Laplacian $\lp_\z$, defined by
\[\lp_\z = e^{-x \cd \z} \lp e^{x\cd \z},\]
where $\z \in \C^n$ and $\tau = \a{\Re \z}$ is large. In particular, we would like the show that  the operator $L_{A,q,\z}$, defined by
\begin{equation}
\label{laqz1}
L_{A,q,\z} = e^{-x \cd \z} L_{A,q}e^{x\cd \z} = -\lp_\z - 2i A \cd( \z +\na ) + \dotsb
\end{equation}
is invertible on some function spaces. Thus, we need a lower bound for $\lp_\z$ that can absorb the lower order terms. 

Estimates for operators like $\lp_\z$ arise in the unique continuation problem for operators of the form $-\lp + A \cd \na  + V$. The weak unique continuation property for operators of the form $-\lp + V$, where $V \in L^{n/2}_\loc$, follows from the Carleman estimate of~\cite{KenigRuizSogge1987}
\[\n{e^{-\tau x_1} u}_{p'} \ll \n{e^{-\tau x_1}\lp u}_{p},\]
where $1/p+1/p'=1$ and $1/p-1/p'=2/n$. This estimate is equivalent to an estimate of the form
\[\n{v}_{p'} \ll \n{\lp_{\tau x_1} v}_p\]
for the conjugated Laplacian $\lp_{\tau x_1}$. Similarly, given $A\in L^q_\loc$, unique continuation for the operator $-\lp + A \cd \na$ would follow from a gradient Carleman estimate of the form
\begin{equation}
\label{gradientcarleman}
\n{e^{\tau \phi} \na u}_{L^r} \ll \n{e^{\tau \phi} \lp u}_{L^{p}},
\end{equation}
where $1/p-1/r = 1/q$.

Barcelo, Kenig, Ruiz and Sogge~\cite{BarceloKenigRuizSogge1988} showed that no such gradient Carleman estimate can hold for linear weights of the form $\phi = x_1$ unless $r = p = 2$. In contrast, they proved unique continuation for $A \in L^{(3n-2)/2}_\loc$ and $V \in L^{n/2+}_\loc$ by establishing a gradient Carleman estimate~\eqref{gradientcarleman} for the convex weight $\phi = x_1 + x_1^2$. On the other hand, they showed that for any weight $\phi$, the estimate~\eqref{gradientcarleman} cannot hold uniformly in $\tau$ and $u$ unless the exponents $p$ and $r$ satisfy the condition $1/p-1/r \leq 2/(3n-2)$. This means that the Carleman method cannot be directly applied when $1/q > 2/(3n-2)$.

Nevertheless, it was shown in~\cite{Wolff1992} that the operator $\lp + A \cd \na + q$ has the weak unique continuation property for $A \in L^{n}$ and $V \in L^{n/2}$. A strong unique continuation result of a similar type was proven in~\cite{KochTataru2001}. The idea is that the gradient Carleman estimate~\eqref{gradientcarleman} can be rescued by localizing it to a very small set. Wolff showed that, in a sense, the exponential weight effectively localizes the problem. Unfortunately, it is not clear how this approach could be used to establish uniform bounds for an operator of the form~\eqref{laqz1}, as the argument in~\cite{Wolff1992} only works for a single function $u$.

That localization has a smoothing effect is consistent with the uncertainty principle, since localization in physical space at the scale $\mu^{-1}$ corresponds to averaging in Fourier space at the scale $\mu$. This averaging smooths out the singular behavior of $\lp_\z$ at the characteristic set, so that the distance in Fourier space from the characteristic set $\Char \lp_\z$ is effectively bounded below by $\mu$.  

We take a ``dual'' perspective, by noting that as the modulation $d(\xi,\Char \lp_\z)$ grows, the operator $\lp_\z$ becomes more and more elliptic. This fact holds without localizing in physical space, and so we use this high-modulation gain directly in order to overcome the failure of the gradient Carleman estimate. 

To keep track of the modulation, we use Bourgain-type spaces~\cite{Bourgain1993} with norm $\n{\cd}_{\dot X^{b}_\z}$ given by
\[\n{u}_{\dot X^b_\z} = \n{\lp_\z^b u}_{L^2}.\]
Solvability of~\eqref{laqz1} will follow from a bilinear estimate of the form
\[\a{\j{A\cd (\na + \z) u, v} } \ll \n{u}_{\dot X^{1/2}_\z} \n{v}_{\dot X^{1/2}_\z}.\]
The $\dot X^{1/2}_\z$ norm localizes $\hat u$ and $\hat v$ near the characteristic set $\Si_\z$, which lies in the plane 
\[(\Re\z )^\perp  = \{\xi: \xi \cd \Re(\z) = 0\}\]
 Thus, the worst-case scenario occurs when $\hat A(\xi)$ also concentrates on the plane $(\Re \z)^\perp$.

Using an averaging argument based on Plancherel's theorem, we will show that $\hat A$ cannot concentrate on too many planes through the origin. This will show that $A\cd (\na+\z)$ is a bounded map from $\dot X^{1/2}_\z$ to $\dot X^{-1/2}_\z$ for most values of $\Re \z$. Since $\Re \z$ is, to a large extent, a free parameter, this is enough to run a version of the argument in~\cite{Sun1993}.

We now give an outline of the paper. Sections~\ref{integralidentity}-\ref{theoperator} contain standard material due to~\cite{SylvesterUhlmann1987,Sun1993,EskinRalston1995}. In Section~\ref{dyadic}, we define a dyadic decomposition in frequency and modulation, which will be used extensively throughout the paper. In Section~\ref{xsb}, we introduce the Bourgain spaces $\dot X^b_\z$ and $X^b_\z$ and recall some basic estimates for these spaces from~\cite{HabermanTataru2013,Haberman2015a}. In Section~\ref{averaging}, we review some averaging estimates from~\cite{HabermanTataru2013,Haberman2015a} and prove an additional averaging estimate which follows from the Carleson-Sj\"olin theorem. 

In Section~\ref{amplitude}, we prove new estimates for the amplitude $a$ of the CGO solutions. The amplitude has the form
\[a = \exp(\pb_e^{-1} A),\]
where $\pb_e = (e_1 + i e_2) \cd \na$ for some orthonormal vectors $\{e_1,e_2\}$ in $\R^3$. Since $A$ is only assumed to be in $W^{s,3}$, the amplitude $a$ may behave very badly. Note, however, that if $\pb^{-1}_e$ were replaced by $\a{\na}^{-1}$, then $a$ would be bounded in $L^\infty\cap W^{1,3}_\loc$. By averaging, we show that, for many choices of $e_1$ and $e_2$, the behavior of $a$ is acceptable. In particular, we show that expressions of the form $a q$ (where $q \in W^{-1,3}$) and $\lp a$ are bounded in $X^{-1/2}_\z$. Establishing this is a bit delicate and constitutes the main technical difficulty in this paper relative to previous work. 

We note that it is not too difficult to produce CGO solutions with remainders $\psi$ whose $\dot X^{1/2}_\z$ norm grows like $o(\tau^{1/2})$. This was accomplished in the author's dissertation~\cite{Haberman2015} and is sufficient to determine the magnetic potential $A$. This is because the main term in the integral identity~\eqref{weakmagnetic} has size $\tau$, so errors of order $o(\tau)$ are acceptable. However, once it is shown that the magnetic potentials $A_1$ and $A_2$ coincide, the main term in the integral identity~\eqref{weakmagnetic} has size 1, so the error terms should be of order $o(1)$. If the $\dot X^{1/2}_\z$ norm of the remainders $\psi$ were to grow like $o(\tau^{1/2})$, we would not be able to control the error terms in the integral identity without assuming that $q \in L^\infty$.

In Section~\ref{solvabilityoflaqz}, which contains results from the author's dissertation~\cite{Haberman2015}, we prove estimates for the operator norms of the terms in $L_{A,q,\z}+\lp_\z$. Since $\lp_\z$ maps $\dot X^{1/2}_\z$ isometrically to $\dot X^{-1/2}_\z$, the operator $L_{A,q,\z}$ has a bounded inverse $L_{A,q,\z}^{-1}: \dot X^{-1/2}_\z \to \dot X^{1/2}_\z$ as long as the operator norm $\n{L_{A,q,\z} + \lp_\z}_{\dot X^{1/2}_\z \to \dot X^{-1/2}_\z}$ is sufficiently small. 

In~\cite{Haberman2015a}, the author showed that multiplication by a potential $q$ in $W^{-1,3}$ is bounded in this operator norm by combining the $L^p$ Carleman estimates of~\cite{KenigRuizSogge1987} with an averaging argument. In the present work, we also consider first-order terms such as $A \cd \na$. These terms are more difficult to control, since the behavior of $A \cd \na$ is much worse than the behavior of $q$ when $A$ concentrates at low frequencies. To remedy this, we use the fact that when the frequency of $A$ is sufficiently low, the curvature of the characteristic set does not play an important role. 

In this section of the paper we encounter some logarithmic divergences, which is why we need a regularity assumption on $A$. It is likely that this limitation can be removed, at least for $A \in L^{3+}$, by using a refined version of the pseudodifferential conjugation technique in~\cite{NakamuraUhlmann1994}. This technique should also eliminate the smallness assumption on $A$. We hope to address this problem in future work.

In Section~\ref{proof}, we show that our averaged estimates are sufficient to run the Sylvester-Uhlmann argument. In~\cite{Haberman2015a}, the author concluded the proof of uniqueness in the case $A = 0$ using a compactness argument from~\cite{NguyenSpirn2014}. This argument relies on the decay of the operator norm $\n{q}_{X^{1/2}_\z \to X^{-1/2}_\z}$ as $\tau \to \infty$. It fails for the magnetic Schr\"odinger equation, because the operator norm $\n{A}_{X^{1/2}_\z \to X^{-1/2}_\z}$ does not decay as $\tau \to \infty$, even for smooth $A$. Instead, we use the Banach-Alaoglu theorem to show that the Fourier transforms of $\curl A$ and $q$, which are analytic, vanish on a set of positive measure. 

\section{An integral identity} 
\label{integralidentity}
We now give a very rough outline of how to show that $\Ld_{A_1,q_1}=\Ld_{A_2,q_2}$ implies that $(\curl A_1,q_1)=(\curl A_2,q_2)$ using the Sylvester-Uhlmann strategy. The first step is to write the condition that $\Ld_{A_1,q_1} = \Ld_{A_2,q_2}$  as an integral identity.
\begin{lemma}
If $\Ld_{A_1, q_2} = \Ld_{A_2,q_2}$, then the integral identity
\begin{equation}
\label{weakmagnetic}
\int [i(A_1-A_2)\cd (u_1 \na  u_2 - u_2 \na  u_1) + (A_1^2 -A_2^2 + q_1-q_2)u_1  u_2]\,dx = 0
\end{equation}
holds for any $u_i \in H^1(B)$ solving $L_{A_1,q_1}u_1 = 0$ and $L_{- A_2, q_2} u_2 = 0$ in $B$.
\end{lemma}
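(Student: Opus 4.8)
The plan is to recast \eqref{weakmagnetic} as the equality of a single bilinear form evaluated with the two sets of coefficients, and then to extract that equality from $\Ld_{A_1,q_1} = \Ld_{A_2,q_2}$; this is the classical Sylvester--Uhlmann/Sun integral-identity argument~\cite{SylvesterUhlmann1987,Sun1993}, and at the present regularity only the bookkeeping changes. First I would introduce, for $u,v \in H^1(B)$, the \emph{unconjugated} bilinear form
\[
\B_{A,q}[u,v] = \int_B \big[(\na + iA)u \cd (\na - iA)v + q\,u v\big]\,dx,
\]
which is the form naturally associated with the formal transpose $L_{-A,q}$ of $L_{A,q}$ (this is why $u_2$ is taken to solve $L_{-A_2,q_2}u_2 = 0$ rather than $L_{A_2,q_2}u_2=0$). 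I would check it is bounded on $H^1(B)\times H^1(B)$: in three dimensions $H^1(B)\hookrightarrow L^6(B)$, so $uv\in L^3$ and $u\na v,\, v\na u\in L^{3/2}$, and these pair against $A\in W^{s,3}\subset L^3$ and $q\in W^{-1,3}$; the compact support of $A,q$ in $\f12 B$ makes all of this unambiguous. A one-line computation, using commutativity of the dot product, also gives the symmetry $\B_{A,q}[u,v] = \B_{-A,q}[v,u]$.

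Next I would record Green's identity in weak form: $L_{A,q}u=0$ means exactly that $\B_{A,q}[u,v] = 0$ for all $v\in H^1_0(B)$, so for such $u$ the functional $v\mapsto\B_{A,q}[u,v]$ descends to the trace $v|_{\pd B}\in H^{1/2}(\pd B)$ and defines the conormal trace through $\langle (\pd_\nu + i\nu\cd A)u,\, v|_{\pd B}\rangle_{\pd B} = \B_{A,q}[u,v]$ for $v \in H^1(B)$; thus $\Ld_{A,q}$ is a relation in $H^{1/2}(\pd B)\times H^{-1/2}(\pd B)$ recording precisely these pairings. Then I would expand the two quadratic forms and cancel the common $\na u_1\cd\na u_2$ term, obtaining
\[
\B_{A_1,q_1}[u_1,u_2] - \B_{A_2,q_2}[u_1,u_2] = \int_B \big[i(A_1-A_2)\cd(u_1\na u_2 - u_2 \na u_1) + (A_1^2 - A_2^2 + q_1 - q_2)u_1 u_2\big]\,dx,
\]
which is the left side of \eqref{weakmagnetic}. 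Hence it suffices to prove $\B_{A_1,q_1}[u_1,u_2] = \B_{A_2,q_2}[u_1,u_2]$.

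To get this, I would use $\Ld_{A_1,q_1} = \Ld_{A_2,q_2}$ to choose $\ti u\in H^1(B)$ with $L_{A_2,q_2}\ti u = 0$ sharing the Cauchy data of $u_1$, i.e.\ $\ti u|_{\pd B} = u_1|_{\pd B}$ and $(\pd_\nu + i\nu\cd A_2)\ti u|_{\pd B} = (\pd_\nu + i\nu\cd A_1)u_1|_{\pd B}$. Using $L_{A_1,q_1}u_1 = 0$, then this equality of Cauchy data, then $L_{A_2,q_2}\ti u = 0$,
\[
\B_{A_1,q_1}[u_1,u_2] = \langle(\pd_\nu + i\nu\cd A_1)u_1,\, u_2|_{\pd B}\rangle_{\pd B} = \langle(\pd_\nu + i\nu\cd A_2)\ti u,\, u_2|_{\pd B}\rangle_{\pd B} = \B_{A_2,q_2}[\ti u,u_2].
\]
Finally $w := \ti u - u_1 \in H^1_0(B)$, and by the symmetry above $\B_{A_2,q_2}[w,u_2] = \B_{-A_2,q_2}[u_2,w]$, which vanishes because $L_{-A_2,q_2}u_2 = 0$ and $w$ has zero trace. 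Thus $\B_{A_2,q_2}[\ti u,u_2] = \B_{A_2,q_2}[u_1,u_2]$, which combined with the displays above gives \eqref{weakmagnetic}.

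I expect the only real obstacle to be checking that this construction is legitimate at the stated regularity: that $\B_{A,q}$ is bounded on $H^1(B)\times H^1(B)$ and that the conormal trace $(\pd_\nu + i\nu\cd A)u$ is a genuine element of $H^{-1/2}(\pd B)$, so that membership in $\Ld_{A,q}$ actually carries the information used above. This is exactly where the compact support of $A$ and $q$ away from $\pd B$ (so that $L_{A,q} = -\lp$ near the boundary, where classical trace theory applies) and the three-dimensional Sobolev embedding are used; the remainder is routine.
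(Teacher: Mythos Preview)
Your proof is correct and is essentially the same as the paper's: your form $\B_{A,q}$ coincides with the paper's $Q_{A,q}$ (expand $(\na+iA)u\cd(\na-iA)v$), and the chain $\B_{A_1,q_1}[u_1,u_2]=\B_{A_2,q_2}[\ti u,u_2]=\B_{A_2,q_2}[u_1,u_2]$ via the symmetry $\B_{A,q}[u,v]=\B_{-A,q}[v,u]$ mirrors the paper's manipulation exactly, with your boundary pairing $(\pd_\nu+i\nu\cd A)u$ reducing to the paper's $\pd_\nu u$ since $A$ vanishes near $\pd B$. Your added remarks on $H^1\times H^1$ boundedness of the form at the stated regularity are a welcome elaboration the paper leaves implicit.
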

\begin{proof}
Define the bilinear form $Q_{A,q}$ by
\begin{align*}
Q_{A,q}(u,v) & = \int_B (\na u \cd \na v + i A \cd(u \na v - v \na u) + (A^2 + q) u v)\,dx.
\end{align*}
If $u$ and $v$ and functions in $H^1(B)$ and $u$ is a weak solution to the equation $L_{A,q}u =0$ in $B$, then
\begin{equation}
\begin{aligned}
0& = \int_{B}( -\div \na u -i\div (Au) - i A\cd \na u + A^2 + q) v\,dx \\
&= Q_{A,q}(u,v) -\int_{\pd B} \pd_\nu u \cd v \,dx,
\end{aligned}
\label{weakmagneticdef}
\end{equation}
where $\nu$ is the outward unit normal to $\pd B$. Thus we have the identity
\begin{equation}
\label{biid}
Q_{A,q}(u,v) = \j{\eval{\pd_\nu u}_{\pd B}, \eval{\bar v}_{\pd \Om}}_{L^2(\pd B)}.
\end{equation}

Suppose we are given functions $u_1$ and $u_2$ in $H^1(B)$ satisfying the equations $L_{A_1,q_1} u_1 =0$ and $L_{-A_2,q_2} u_2 =0$. The assumption that $\Ld_{A_1,q_2} = \Ld_{A_2,q_2}$ implies that there is some $v_2$ in $H^1(B)$ such that $L_{A_2,q_2} v_2 = 0$ and
\begin{align*}
\eval{u_1-v_2}_{\pd \Om} = 0,\qq\eval{\pd_\nu (u_1-v_2)}_{\pd B} =0.
\end{align*}
Thus, by the identity~\eqref{biid}, we derive that
\begin{align*}
Q_{A_1, q_1}(u_1, u_2) & = \j{\eval{\pd_\nu u_1}_{\pd B}, \eval{\bar u_2}_{\pd \Om}}_{L^2(\pd B)}\\
&=\j{\eval{\pd_\nu v_2}_{\pd B}, \eval{\bar u_2}_{\pd \Om}}_{L^2(\pd B)}\\
&= Q_{A_2,q_2}(v_2, u_2) 
\end{align*}
On the other hand, by the definition of $Q_{A,q}(u,v)$, we have $Q_{A,q}(v,u) = Q_{-A,q}(u,v)$. Thus, using the identity~\eqref{biid} again, we derive
\begin{align*}
Q_{A_2,q_2}(v_2, u_2)&=Q_{- A_2,  q_2}( u_2,v_2)\\
&= Q_{-A_2,q_2}(u_2, u_1) \\
&=Q_{A_2,q_2}(u_1,u_2).
\end{align*}
We conclude that $Q_{A_1,q_1}(u_1,u_2) - Q_{A_2,q_2}(u_1,u_2) = 0$, which is~\eqref{weakmagnetic}.

\end{proof}

To use this integral identity, we construct CGO solutions $u_1$ and $u_2$ to the equations $L_{A_1,q_1} u_1 = 0$ and $L_{-A_2,q_2} u_2 = 0$. The CGO solutions $u_i$ are approximately complex exponentials $e^{x\cd \z_i}$, where 
\begin{gather*}
\z_1\sim \tau(e_1 + ie_2)\\
\z_2  \sim -\tau(e_1 + ie_2)\\
\z_1 + \z_2  = ik
\end{gather*}
for some arbitrary vectors $e_1, e_2, k \in \R^3$ satisfying 
\begin{gather*}
e_1 \perp e_2 \perp k\\
\a{e_1}=\a{e_2} =1.
\end{gather*}
Substituting the CGO solutions $u_i \sim e^{x\cd \z_i}$ into the integral identity~\eqref{weakmagnetic} gives
\begin{align*}
0 \sim -2i\tau (e_1 + ie_2) \cd \int_\Om (A_1 - A_2) e^{ik\cd x}\,dx + \int_\Om (A_1^2 - A_2^2 + q_1 - q_2) e^{ik\cd x}\,dx.
\end{align*}
Taking the limit as $\tau \to \infty$, we have
\[0 = (e_1 + ie_2) \cd \widehat{A_1 - A_2}(k)\]
for every pair $\{e_1, e_2\}$ of orthonormal vectors perpendicular to $k$. This implies that $\curl A_1 = \curl A_2$. In particular, by Poincar\'e's lemma, there is a gauge transform $\psi$ such that $A_1 - A_2 = \na \psi$.

The Dirichlet-Neumann relation is invariant under such gauge transforms.
\begin{lemma}
\label{gaugeinvariance}
Suppose $\psi$ is a function supported in $\f 1 2 B$ such that $\psi \in W^{1,3}(B)\cap L^\infty(B)$. Then $\Ld_{A + \na \psi, q} = \Ld_{A, q}$.
\end{lemma}
\begin{proof}
We have
\[e^{-i\psi}L_{A,q} e^{i \psi} = -(\na + iA + i\na \psi)^2 + q.\]
Thus the map $u \mapsto e^{-i \psi} u$ is a bijection between solutions to $L_{A,q} u = 0$ and solutions to $L_{A + \na \psi,q} u =0$. Since $\psi$ is supported in $\f 1 2 B$, multiplication by $e^{i\psi}$ does not change the boundary data, so the conclusion of the lemma follows.
\end{proof}

By the gauge-invariance of the Dirichlet-Neumann relation, we have $\Ld_{A_2,q_2} = \Ld_{A_1,q_2}$. Since we assumed that $\Ld_{A_2,q_2} = \Ld_{A_1,q_1}$, this implies that $\Ld_{A_1,q_1} = \Ld_{A_1,q_2}$. Now construct CGO solutions to the equations $L_{A_1,q_1} u_1 = L_{-A_1,q_2}u_2 = 0$. Substituting the $u_i$ into the integral identity~\eqref{weakmagnetic} again gives
\[0 \sim \int_{\Om} (q_1 - q_2) e^{ik \cd x}\,dx,\]
and we can conclude that $q_1 = q_2$.

\section{A transport equation}
\label{transportequation}
When the magnetic potential $A$ is nonzero, the form of the CGO solutions will depend on $A$. We construct solutions of the form
\begin{equation}
\label{phasesolution}
u = e^{x \cd \z} (a + \psi),
\end{equation}
where $a = e^{-i\phi}$ for a suitable function $\phi$ depending on $\z$. The remainder $\psi$ must solve the equation
\begin{equation}
\label{cgowithphase}
L_{A,q,\z} \psi = -\lp a - 2  \z \cd \na a - i(\na \cd A) a - 2i A \cd \na a - 2 i \z \cd A a + A^2a + qa,
\end{equation}
where the operator $L_{A,q,\z}$ is defined by
\begin{align*}
L_{A,q,\z} & = e^{-x \cd \z} L_{A,q} e^{x \cd \z} \\
& = -(\na + iA+\z)^2 + q.
\end{align*}
In order to eliminate the terms of order $\tau$ on the right hand side of~\eqref{cgowithphase}, we choose $\phi$ so that $a$ solves (roughly speaking) a transport equation of the form
\[ \z \cd \na a = -i \z \cd A a.\]
Equivalently, the function $\phi$ satisfies an equation of the form
\[\z \cd \na \phi = \z\cd A .\]
Since $\z = \tau (e_1 + i e_2)$, where $e_1$ and $e_2$ are orthonormal vectors, this a $\pb$ equation for $\phi$ in the plane determined by $e_1$ and $e_2$. 

Given $e=e_1 + i e_2$, where $e_1$ and $e_2$ are orthogonal unit vectors, define
\[\pb_e = e_1 \cd \na + i e_2 \cd \na.\]
We now assume for simplicity that $e_1$ and $e_2$ are the standard basis vectors. In this case, the operator $\pb$ is given by
\[\pb = \pd_1 + i \pd_2.\]
Let $f$ be a function defined on the complex plane, which we identify with $\R^2$ by writing $z = z_1 + iz_2$. The equation
\[\pb u = f\]
is of Cauchy-Riemann type. If $f$ is smooth and compactly supported, then it has a solution given by the formula
\[\pb^{-1} f(w) = \frac{1}{2\pi} \int\frac{f(w-z)}{z}\,dz_1\,dz_2.\]
The kernel $(2 \pi z)^{-1}$ is locally integrable, so it has good mapping properties.
\begin{lemma}
\label{dbli}
If $f: \C \to \R$ is supported in $B(0,1/2)$, then
\[\n{\j{w}\pb_e^{-1}f(w)}_{L^\infty} \ll \n{f}_{L^\infty}.\]
\end{lemma}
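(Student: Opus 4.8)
The plan is to reduce the statement to the model case $\pb_e=\pb$, for which the excerpt provides an explicit kernel, and then to bound the resulting Cauchy transform directly, using local integrability of its kernel near $w=0$ and the separation between $w$ and the support of $f$ when $w$ is large.

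First I would reduce to $\pb^{-1}$. For a general orthonormal pair $e_1,e_2$, the operator $\pb_e$ is conjugated to $\pb=\pd_1+i\pd_2$ by an isometry of the $(z_1,z_2)$-plane, and both the weight $\j{w}$ and the norm $\n{\cd}_{L^\infty}$ are invariant under such isometries, so it suffices to prove the estimate for
\[\pb^{-1}f(w)=\frac1{2\pi}\int\frac{f(w-z)}{z}\,dz_1\,dz_2.\]
Since $f$ is supported in $B(0,1/2)$, the integrand vanishes unless $z\in B(w,1/2)$, so for every $w$
\[\a{\pb^{-1}f(w)}\le\frac{\n{f}_{L^\infty}}{2\pi}\,I(w),\qquad I(w):=\int_{B(w,1/2)}\frac{dz}{\a z}.\]

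Next I would estimate $I(w)$ in two regimes. When $\a w\le 1$ one has $B(w,1/2)\subset B(0,3/2)$, so $I(w)\le\int_{B(0,3/2)}\a z^{-1}\,dz=3\pi$ (in fact, since $\a z^{-1}$ is radially nonincreasing, a layer-cake argument gives the sharper uniform bound $I(w)\le\int_{B(0,1/2)}\a z^{-1}\,dz=\pi$ for all $w$); combined with $\j{w}\le\sqrt 2$ this proves the claim for $\a w\le1$. When $\a w>1$, any $z\in B(w,1/2)$ satisfies $\a z\ge\a w-\tfrac12>\tfrac12\a w$, so $I(w)\le\tfrac{2}{\a w}\a{B(0,1/2)}=\tfrac{\pi}{2\a w}$ and hence $\a{\pb^{-1}f(w)}\le\n{f}_{L^\infty}/(4\a w)$; together with $\j{w}\le\sqrt2\,\a w$ for $\a w>1$, this gives $\j{w}\,\a{\pb^{-1}f(w)}\lesssim\n{f}_{L^\infty}$ in that regime as well.

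There is no real obstacle in this argument; the only point deserving a moment's care is that the bound on $I(w)$ be uniform in $w$, which is why I isolate it above. Putting the two regimes together yields the asserted estimate $\n{\j{w}\pb^{-1}f(w)}_{L^\infty}\lesssim\n{f}_{L^\infty}$.
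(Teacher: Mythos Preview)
Your proof is correct and follows essentially the same approach as the paper: both bound the Cauchy transform by $\n{f}_{L^\infty}\int_{B(w,1/2)}\a{z}^{-1}\,dz$ and then split into the regimes $\a{w}\le 1$ (where the ball is contained in $B(0,3/2)$) and $\a{w}>1$ (where $\a{z}\ge\a{w}/2$ on the support). The only addition is your explicit reduction from $\pb_e$ to $\pb$ via an isometry, which the paper leaves implicit.
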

\begin{proof}
Write
\begin{equation*}
\a{\pb_e^{-1} f(w)} \ll \n{f}_{L^\infty} \int \chi_{B(0,1/2)}(z-w) \a{z}^{-1}\,dz_1\,dz_2
\end{equation*}
When $\a{w} \leq 1$, we estimate the integral by 
\[\int_{B(0,3/2)} \a{z}^{-1}\,dz_1\,dz_2 \sim 1.\]
When $\a{w} > 1$, we have $\a{z} \geq \a{w}/2$ in the region of integration, so we estimate instead by
\[\a{w}^{-1} \int \chi_{B(0,1/2)}(z-w) dz_1\,dz_2 \sim \a{w}^{-1}.\]
\end{proof}

When we substitute CGO solutions of the form $u_i = e^{x \cd \z_i} (e^{i \phi_i} + \psi)$ into the integral identity~\eqref{weakmagnetic}, the main term has the form
\[-i(\z_1 - \z_2) \cd \int_\Om (A_1 - A_2) e^{i(\phi_1 - \phi_2)} e^{ix\cd k}\,dx.\]
The next lemma, due to~\cite{EskinRalston1995}  says that we can remove the factor $e^{i (\phi_1-\phi_2)}$ from this integral and recover the Fourier transform.
\begin{lemma}
\label{undophase}
Let $e_1, e_2, k \in \R^n$ be arbitrary vectors satisfying $\a{e_1}=\a{e_2}=1$ and $e_1\cd e_2 = e_1\cd k= e_2\cd k=0$. Let $A \in C_0^\infty(\R^n)$, and let $\phi =  \pb_e^{-1}(e\cd  A)$. Then
\[(e_1+ie_2) \cd \int A e^{-i\phi} e^{ix\cd k} \,dx = (e_1 + ie_2)\cd \int A e^{i x\cd k} \,dx.\]
\end{lemma}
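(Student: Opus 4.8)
The plan is to exhibit the vector field $(e_1+ie_2)\cd A\,e^{-i\phi}$ as an exact derivative in the $e$-plane, so that the difference of the two integrals collapses to a boundary term that vanishes by the support assumption on $A$. First I would reduce to the case where $e_1,e_2$ are the first two standard basis vectors and $k$ is a multiple of $e_3$ (or, in $\R^n$, lies in $(e_1^\perp\cap e_2^\perp)$); this is just a rotation, and both sides transform correctly, so no generality is lost. Writing $z=z_1+iz_2$ for the coordinates in the $e$-plane and $z'$ for the remaining coordinates, the factor $e^{ix\cd k}$ depends only on $z'$ and is therefore inert under $\pb=\pd_1+i\pd_2$. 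So it suffices to prove, for each fixed $z'$,
\[
\int_{\R^2}(\pb\phi)(z)\,e^{-i\phi(z)}\,dz_1\,dz_2 = \int_{\R^2}(\pb\phi)(z)\,dz_1\,dz_2,
\]
since by construction $\pb\phi = e\cd A = (e_1+ie_2)\cd A$ in these coordinates.

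The key observation is that $(\pb\phi)e^{-i\phi} = i\,\pb(e^{-i\phi})$, so the left-hand integrand is $i\,\pb(e^{-i\phi}-1)$ (subtracting the constant $1$ is harmless and makes the next step cleaner, since $e^{-i\phi}-1$ decays). Likewise the right-hand integrand is $\pb\phi$. Thus the claimed identity is equivalent to
\[
\int_{\R^2}\pb\bigl(i(e^{-i\phi}-1)-\phi\bigr)\,dz_1\,dz_2 = 0,
\]
i.e. the $\pb$-derivative of a single compactly-supported-enough function integrates to zero over $\R^2$. This is a pure divergence fact: $\int_{\R^2}\pb g = \int_{\R^2}(\pd_1+i\pd_2)g = 0$ whenever $g$ decays suitably at infinity, by the divergence theorem (or directly from Fubini, integrating $\pd_1 g$ in $z_1$ and $\pd_2 g$ in $z_2$).

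The one genuine point that needs care—and which I expect to be the main obstacle—is the decay of $g := i(e^{-i\phi}-1)-\phi$ at infinity, which is what licenses throwing away the boundary term. Here is where the hypotheses are used: $A\in C_0^\infty$, so $e\cd A$ is smooth and compactly supported, and Lemma~\ref{dbli} gives $\phi(w)=\pb_e^{-1}(e\cd A)(w) = O(\japs{w}^{-1})$, hence $\phi\to 0$ at infinity with an integrable-type rate, and $\na\phi$ decays at least as fast (by differentiating the Cauchy kernel representation, or by the analogous pointwise bound). Since $|e^{-i\phi}-1-(-i\phi)|\ll|\phi|^2 = O(\japs{w}^{-2})$, the function $g$ is $O(\japs{w}^{-2})$ with $\na g = O(\japs{w}^{-2})$ as well, which is comfortably enough to kill the circle-at-infinity contribution in the divergence theorem on a disk of radius $R\to\infty$ (the boundary integral is $O(R\cdot R^{-2})=O(R^{-1})$). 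Assembling: rotate to standard position, slice in $z'$, rewrite $(\pb\phi)e^{-i\phi}$ as $i\,\pb(e^{-i\phi}-1)$, subtract the $\pb\phi$ term to get $\pb g$, and invoke $\int_{\R^2}\pb g = 0$ via the decay just established; finally reassemble the $z'$-integral with the (bounded, oscillatory) factor $e^{ix\cd k}$, which survives untouched. That completes the proof.
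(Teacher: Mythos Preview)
Your proof is correct and follows essentially the same route as the paper's: rotate to standard coordinates, slice in the variables orthogonal to the $e$-plane, rewrite $(e\cdot A)e^{-i\phi}=i\,\pb_e(e^{-i\phi})$, and use the divergence theorem together with the decay $\phi=O(\langle w\rangle^{-1})$ from Lemma~\ref{dbli}. The only organizational difference is that you subtract the linear Taylor term first to form a single function $g=i(e^{-i\phi}-1)-\phi=O(\langle w\rangle^{-2})$ and then apply the divergence theorem once, whereas the paper applies the divergence theorem to $e^{-i\phi}$ and then expands $e^{-i\phi}=1-i\phi+O(\langle w\rangle^{-2})$ on the boundary circle; the content is the same. (One small remark: the decay you need for the boundary term is that of $g$ itself, not $\nabla g$; your computation $O(R\cdot R^{-2})$ already uses only $g=O(\langle w\rangle^{-2})$, so the aside about $\nabla g$ is unnecessary.)
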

\begin{proof}
We assume that $e_1$ and $e_2$ are the first two standard basis vectors. Since
\[(e_1+ie_2) \cd A e^{-i \phi} = i \pb_e (e^{-i \phi}),\]
and $k = (0,0,k')$, we may write
\begin{align}\label{fub}
(e_1 + ie_2) \cd \int A e^{-i \phi(x)} e^{i x\cd k}\,dx & = i\int \pb_e(e^{-i \phi(x_1,x_2,x')}) e^{ ix'\cd k'}\,dx_1\,dx_2\,dx'.
\end{align}
By the divergence theorem, we have
\begin{align}
\label{thisthing}
\int (\pd_1 + i \pd_2)(e^{-i \phi(x_1,x_2,x')})\,dx_1\,dx_2  & = \lim_{R \to \infty} \int_{\pd B(0,R)}(\nu_1 + i \nu_2) e^{-i \phi(x_1,x_2,x')}\,dS,
\end{align}
where $\nu$ is the outward unit normal on the circle $\pd B(0,R)$. By Lemma~\ref{dbli}, the estimate $\a{\phi} = O(1/\j{x_1 + ix_2})$ holds uniformly in $x'$. Thus we have a Taylor expansion of the form
\[e^{-i \phi} = 1 - i \phi + O(\j{x_1 + ix_2}^{-2}).\]
Substituting the Taylor series into the right hand side of~\eqref{thisthing}, we find that
\begin{align*}
\int_{\pd B(0,R)}(\nu_1 + i \nu_2) e^{i \phi(x_1,x_2,x')}\,dS & = \int_{\pd B(0,R)}(\nu_1 + i \nu_2) \,dS - i\int_{\pd B(0,R)} (\nu_1 + i \nu_2)  \phi\,dS \\
&\qq+ \int_{\pd B(0,R)} O(R^{-2}) \,dS\\
&=\int_{B(0,R)} \pb_e(1) \,dx_1 \,dx_2 -i \int_{B(0,R)}  \pb_e \phi\,dx_1\,dx_2 + O(R^{-1})
\end{align*}
Taking the limit as $R \to \infty$ we obtain the identity
\begin{align*}
\int (\pd_1 + i \pd_2)(e^{i \phi(x_1,x_2,x')})\,dx_1\,dx_2 & = -i\int \pb_e \phi\,dx_1\,dx_2 \\
&=-i \int e \cd A\,dx_1\,dx_2
\end{align*}
Substituting the identity into~\eqref{fub}, this proves the lemma.
\end{proof}
\section{The operator $\lp_\z$}
\label{theoperator}
In order to construct solutions to the equation~\eqref{cgowithphase} for the remainder $\psi$, we consider operators of the form
\[\lp_\z = e^{-x\cd \z} \lp e^{x\cd \z}.\]
The complex vector $\z \in \C^3$ is given by
\[\z = \tau (e_1 + i \eta),\]
where $\tau > 0$, $\a{e_1} = 1$, $\a{\eta} \leq 1$ and $\eta \perp e_1$. The symbol of $\lp_\z$ is
\[p_\z(\xi) = (i\xi + \z)^2 = -(\xi+\tau \eta)^2 +2 i\tau e_1 \cd \xi + \tau^2.\]
The characteristic set $\Si_\z$ is the intersection of the plane perpendicular to $e_1$ and a sphere centered at $-\tau \eta$.
\[\Si_\z = \{\xi: \xi\cd e_1 = 0, \a{\xi + \tau \eta} = \tau\}.\]
We will refer to the distance from this set as the {\em modulation}. The symbol $p_\z$ is elliptic at high modulation and vanishes simply on $\Si_\z$.
\begin{equation}
\label{pbehavior}
\a{p_\z} \sim \begin{cases}
\tau d(\xi, \Si_\z) &\WHEN d(\xi,\Si_\z) \leq \tau/8\\
\tau^2 + \a \xi^2 &\WHEN d(\xi,\Si_\z) \geq \tau/8
\end{cases}
\end{equation}

\section{Dyadic projections}
\label{dyadic}
If $m$ is a smooth function on $\R^n$, then $m(D)$ will denote the Fourier multiplier with symbol $m(\xi)$.

Let $\chi \in C^\infty_0([0,1])$ be a smooth function such that $\chi = 1$ on $[0,3/4]$. For each dyadic integer $\ld = 2^k$,  define the Littlewood-Paley projection $P_{\leq \ld}$ onto frequencies of magnitude $\a{\xi} \leq \ld$ by $P_{\leq \ld} = \chi(\a{D}/\ld)$. Similarly, define the projection $P_{>\ld}$ onto frequencies of magnitude $\a{\xi} \gg \ld$ by $P_{>\ld} = I-P_{\leq \ld}$, and define the projection $P_{\ld}$ onto frequencies of magnitude $\a{\xi} \sim \ld$ by $P_\ld = P_{\leq \ld} - P_{\leq \ld/2}$. 

Note that $I = \sum_{\ld} P_\ld$. Thus we can decompose a function $f$ into a sum of dyadic pieces $f_\ld = P_\ld f$.

We can use the Littlewood-Paley decomposition to characterize the Besov spaces $B^s_{p,q}$. Given $s \in \R$, $p \in (1,\infty)$, and $q \in [1,\infty]$, the Besov space $B^s_{p,q}$ is characterized by the norm
\[\n{u}_{B^s_{p,q}} = \n{u_{\leq 1}}_p + \sps{\sum_{\ld > 1} \n{u_\ld}_p^q}^{1/q}.\]
For any integer $k$, the Littlewood-Paley square function estimate implies that $B^k_{p,2} \subset W^{k,p} \subset B^k_{p,p}$ when $p\geq 2$ and that $B^k_{p,p} \subset W^{k,p} \subset B^k_{p,2}$ when $p \leq 2$. When $s$ is not an integer, the Sobolev space $W^{s,p}$ is usually defined in such a way that $W^{s,p} = B^s_{p,p}$.

Given a pair $\{e_1,e_2\}$ of orthonormal vectors, we set $e = e_1 + i e_2$ and define partial Littlewood-Paley projections 
\begin{align*}
P^{e_1}_{\leq \ld} & = \chi (\a{D \cd e_1}/\ld) \\
P^{e}_{\leq \ld} & =\chi(\a{D \cd e}/\ld).
\end{align*}
We define $P^{e_1}_\ld$ and $P^e_{\ld}$ in a similar way.

Next, we define projections $Q_\nu^\z$ to regions where $d(\xi,\Si_\z) \sim \nu$. Let $\z$ be a complex vector of the form $\z = \tau(e_1 + i \eta)$. We first define the projection $C_{\leq \nu}^\z$ by
\[C_{\leq \nu}^\z = \chi((\a{D^\perp + \tau e_2} - \tau)/\nu),\]
where $\xi^\perp = \xi - (\xi\cd e_1) e_1$. We then define the projection $Q_{\leq \nu}^\z$ by
\[Q_{\leq \nu}^\z = P^{e_1}_{\leq \nu} C_{\leq \nu}^\z.\]
Finally, we define the projection $Q_\nu^\z$ by $Q_\nu^\z = Q_{\leq \nu}^\z - Q_{\leq \nu/2}^\z$ as before.

When the choice of $\z$ is clear from context, we will suppress the dependence of the $Q$ projections on $\z$. Similarly, we will write $P^1$ instead of $P^{e_1}$ or $P^{Ue_1}$.

Define projections $Q_l$ and $Q_h$ onto low and high modulation by 
\begin{align*}
Q_l^\z & = Q_{\leq \tau/8}^\z \\
Q_h^\z & = Q_{> \tau/8}^\z.
\end{align*}
Note that the projection $Q_h^\z$ projects to the region where $\lp_\z$ is elliptic. 

\section{The $X_\z^{b}$ spaces}
\label{xsb}
Given $b \in (-1,1)$, define the homogeneous $\n{\cd}_{\dot X^{b}_\z}$ norm by
\[\n{u}_{\dot X^b_\z} = \n{\lp_\z^b u}_2,\]
and define the inhomogeneous $\n{\cd}_{X^b_\z}$ norm by
\[\n{u}_{X^b_\z} = \n{(\a{\lp_\z} + \tau)^b u}_2.\]
By the symbol estimates~\eqref{pbehavior}, we have the low-modulation $L^2$ estimate
\begin{align}
\label{l2estimate}
\n{Q_\mu u}_{X^b_\z}  & \sim (\mu \tau)^b \n{Q_\mu u}_2,
\end{align}
which holds for $\mu \leq \tau/8$, and the high-modulation $L^2$ estimate
\begin{align}
\label{hmestimate} 
\n{Q_h u}_{X^b_\z} & \sim \n{u}_{H^b_\tau},
\end{align} 
where the semiclassical $\n{\cd}_{H^b_\tau}$ norm is defined by
\[\n{u}_{H^b_\tau} = \n{(\a{D}+\tau)^b u}_2.\]
The $X^b_\z$ spaces behave well under localization, as we see from the following lemma.
\begin{lemma}[\cite{HabermanTataru2013}]
\label{loclemma}
If $\phi$ is a Schwartz function, then
\begin{align}
\label{loc}
\n{\phi u}_{X^{1/2}_\z} & \ll_\phi \n{u}_{\dot X^{1/2}_\z}\\
\label{locdual}
\n{\phi u}_{\dot X^{-1/2}_\z} &\ll_\phi \n{u}_{X^{-1/2}_\z}.
\end{align}
\end{lemma}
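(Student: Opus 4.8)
The plan is to prove the two inequalities~\eqref{loc} and~\eqref{locdual} by duality; in fact~\eqref{locdual} follows from~\eqref{loc} by pairing against a test function and using that $\lp_\z^{1/2}$ and $\lp_\z^{-1/2}$ are formal adjoints (up to harmless symmetry of the symbol), so I would concentrate on~\eqref{loc}. Since $\n{\phi u}_{X^{1/2}_\z} \sim \n{(\a{\lp_\z}+\tau)^{1/2}(\phi u)}_2$ and $\n{u}_{\dot X^{1/2}_\z} = \n{\lp_\z^{1/2} u}_2$, the content is that multiplication by a fixed Schwartz $\phi$ maps the homogeneous space into the inhomogeneous one with a constant depending only on $\phi$. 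I would split $u = Q_l u + Q_h u$ using the low/high modulation projections of Section~\ref{dyadic}. On the high-modulation piece the operator $\lp_\z$ is elliptic, $\a{p_\z} \sim \tau^2 + \a{\xi}^2$, so by~\eqref{hmestimate} the $X^{1/2}_\z$ norm is comparable to the semiclassical $\n{\cd}_{H^{1/2}_\tau}$ norm, and since $\phi$ is Schwartz, multiplication by $\phi$ is bounded on $H^{1/2}_\tau$ uniformly in $\tau \geq 1$ (the symbol $(\a{\xi}+\tau)^{1/2}$ is a slowly varying weight and $\hat\phi$ is rapidly decreasing, so one gets an almost-diagonal kernel bound in the frequency variable). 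This handles $\phi Q_h u$ against $\n{Q_h u}_{\dot X^{1/2}_\z} \lesssim \n{u}_{\dot X^{1/2}_\z}$.

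The essential case is the low-modulation piece $\phi Q_l u$, where the $\dot X^{1/2}_\z$ norm degenerates: by~\eqref{l2estimate} it measures $u$ in $L^2$ with weight $(\mu\tau)^{1/2}$ on the shell $Q_\mu$, $\mu \leq \tau/8$, and one must control $\phi Q_l u$ with the same kind of weight, including the region $\mu \lesssim 1$ where the weight is $\sim \tau^{1/2}$. The point is that multiplying by $\phi$ spreads frequencies by $O(1)$ in every direction, in particular transverse to the characteristic set $\Si_\z$, so it can move mass from a shell $Q_\mu$ with small $\mu$ to shells $Q_{\mu'}$ with $\mu' \lesssim \mu + O(1)$; but since the modulation weight $(\mu\tau)^{1/2}$ is comparable on all shells with $\mu \lesssim 1$ (all give weight $\sim \tau^{1/2}$) and is, for $\mu \gtrsim 1$, a slowly varying function of $\mu$ at unit scale, the $O(1)$ frequency spreading changes the weight by at most a bounded factor. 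Concretely I would estimate $\n{(\a{\lp_\z}+\tau)^{1/2}(\phi Q_\mu u)}_2$ by writing the multiplier in terms of the distance to $\Si_\z$, decomposing $\Si_\z$ into $O(\tau/\mu)$ caps of size $\sim\sqrt{\mu\tau}$ (after rescaling, $\Si_\z$ is a codimension-two sphere of radius $\tau$ and the $\mu$-neighborhood is the relevant object), and observing that convolution with $\hat\phi$ redistributes $\hat u$ among boundedly many neighboring shells with kernel decaying faster than any polynomial in the shell separation, so the weighted $\ell^2$ sum over shells is controlled by Schur's test. Summing the geometric-type series in $\mu$ (the Schwartz decay of $\hat\phi$ beats the at-most-polynomial growth of the weight ratio) gives $\n{\phi Q_l u}_{X^{1/2}_\z} \lesssim_\phi \n{Q_l u}_{\dot X^{1/2}_\z} \lesssim \n{u}_{\dot X^{1/2}_\z}$.

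I expect the main obstacle to be the bookkeeping near the bottom of the modulation range, $\mu \lesssim 1$: there the homogeneous weight $(\mu\tau)^{1/2}$ on the source and the inhomogeneous weight $(\a{\lp_\z}+\tau)^{1/2} \sim \tau^{1/2}$ on the target genuinely differ, and one must check that the finitely many low shells together contribute only $\lesssim \tau^{1/2}\n{Q_{\lesssim 1}u}_2 \lesssim \n{Q_{\lesssim 1}u}_{\dot X^{1/2}_\z}$ — i.e. that $\phi$ does not create frequency content on $\Si_\z$ itself out of nearby but positively-separated frequencies in a way that the homogeneous norm cannot see. This is exactly the place where one uses that $\hat u$ on the very lowest shell $Q_{\mu_0}$ with $\mu_0 \sim \tau^{-1} \cdot (\text{cutoff})$ still carries weight $\gtrsim 1$ relative to $L^2$ after multiplication by $\sqrt{\mu_0\tau}$, combined with the trivial bound $\n{\phi v}_2 \lesssim \n{v}_2$; the homogeneous $\dot X^{1/2}_\z$ norm, being a genuine $L^2$-type norm with an a.e. positive weight (no zero at $\Si_\z$ except on a measure-zero set), dominates $\tau^{1/2}$ times the $L^2$ mass of the part of $u$ already at unit distance from $\Si_\z$, and one checks $\phi$ cannot manufacture more than this. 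Apart from that, the argument is the standard ``multiplication by a Schwartz function is almost the identity at unit frequency scale'' principle, adapted to the anisotropic shells $Q_\mu^\z$, and is already essentially the content of the cited~\cite{HabermanTataru2013}.
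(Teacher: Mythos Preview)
The paper does not give a proof of this lemma; it is quoted from~\cite{HabermanTataru2013}. So there is no in-paper argument to compare against, and your plan must stand on its own.

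Your high-modulation treatment and the duality reduction are both fine. The overall strategy of controlling how convolution with $\hat\phi$ redistributes mass between modulation shells is also the right picture. The gap is precisely where you flag the difficulty, in the region $\mu\ll 1$. You write that one must check
\[
\tau^{1/2}\n{Q_{\ll 1} u}_2 \ll \n{Q_{\ll 1} u}_{\dot X^{1/2}_\z},
\]
but this inequality is false: on the set $\{d(\xi,\Si_\z)<\e\}$ the weight $\a{p_\z}^{1/2}$ is at most $(\e\tau)^{1/2}$, so the homogeneous norm is \emph{dominated by}, not bounded below by, $\tau^{1/2}\n{Q_{\ll 1} u}_2$. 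There is no ``lowest shell'' $\mu_0\sim\tau^{-1}$ in the homogeneous space; the weight genuinely vanishes on $\Si_\z$. The trivial bound $\n{\phi v}_2\ll\n v_2$ is therefore useless here, because it is the smallness of $\n{\phi v}_2$ relative to $\n v_2$ (not a comparability) that you need.

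What actually saves you is that $\Si_\z$ has codimension two, so $\a{p_\z}^{-1/2}\sim (\tau\, d(\xi,\Si_\z))^{-1/2}$ is locally integrable near $\Si_\z$. Concretely, the kernel
\[
K(\xi,\eta)=(\a{p_\z(\xi)}+\tau)^{1/2}\,\hat\phi(\xi-\eta)\,\a{p_\z(\eta)}^{-1/2}
\]
is bounded on $L^2$; the unweighted Schur test fails in the $\xi$-integral (you get $\sim\mu^{-1/2}$ at modulation $\mu\ll 1$), but a weighted Schur test with weight $d(\cdot,\Si_\z)^{-s}$ for any $s\in(\tfrac12,\tfrac32)$ succeeds, using only that $\hat\phi$ is rapidly decreasing and that $d(\cdot,\Si_\z)^{-\alpha}$ is locally integrable in $\R^3$ for $\alpha<2$. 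Equivalently, one can prove the endpoint local smoothing bound $\tau^{1/2}\n{\phi Q_{\leq\mu} u}_2 \ll (\mu\tau)^{1/2}\n{Q_{\leq\mu} u}_2$ for $\mu\leq 1$ by a $TT^*$ argument, estimating the kernel of $Q_{\leq\mu}$ on the support of $\phi$ via the Bessel decay of the Fourier transform of arclength on the circle $\Si_\z$; this gives the missing factor of $\mu$ and the sum over dyadic $\mu\leq 1$ then closes. Either route replaces the incorrect inequality above and completes your outline.
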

By a scaling argument, Lemma~\ref{loclemma} implies the Agmon-H\"ormander-type estimate
\begin{equation}
\label{agmonhormander}
\tau^{1/2} R^{-1/2} \n{u}_{L^2(B(0,R))} \ll \n{u}_{\dot X^{1/2}_\z}.
\end{equation}
Using the estimate~\eqref{agmonhormander}, it is not hard to show that the space $\dot X^{1/2}_\z$ is a Banach space and embeds continuously into $\tau^{-1/2} L^2(\R^3, \j x^{-1-\d}\,dx)$ for any $\d>0$.

The next lemma gives Strichartz-type estimates for the $X^b_\z$ spaces.
\begin{lemma}[\cite{KenigRuizSogge1987,Haberman2015a}]
Suppose $\nu\leq \tau/8$. Then for any $f \in \dot X^{1/2}_\z(\R^3)$, we have
\begin{align}
\label{bandstrichartz}
\n{Q_\nu f}_{6} &\ll (\nu/\tau)^{1/3} \n{f}_{\dot X^{1/2}_\zeta}.\\
\label{strichartz}
\n{f}_6& \ll \n{f}_{\dot X_\zeta^{1/2}}.
\end{align}
\end{lemma}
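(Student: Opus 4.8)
The plan is to treat the band estimate \eqref{bandstrichartz} as the main point and to deduce \eqref{strichartz} from it by a dyadic sum together with an elliptic Sobolev bound on the high-modulation part. The band estimate itself is a Stein--Tomas--type $L^{2}\to L^{6}$ inequality for functions whose Fourier transform is concentrated in a thin band around the characteristic circle $\Si_\z$, and it will be proved by a cap decomposition adapted to the curvature of $\Si_\z$.

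\emph{Reduction of \eqref{bandstrichartz}.} Fix a dyadic $\nu\leq\tau/8$. On the Fourier support of $Q_\nu$ one has $d(\xi,\Si_\z)\sim\nu$, so $\a{p_\z}\sim\tau\nu$ by \eqref{pbehavior}; since $\lp_\z^{1/2}$ is a Fourier multiplier whose symbol has modulus $\a{p_\z}^{1/2}$, and $Q_\nu$ is bounded on $L^2$ and commutes with it, one gets $\n{Q_\nu f}_{\dot X^{1/2}_\z}\sim(\tau\nu)^{1/2}\n{Q_\nu f}_2$ and $\n{Q_\nu f}_{\dot X^{1/2}_\z}\ll\n{f}_{\dot X^{1/2}_\z}$. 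Therefore it suffices to prove the frequency-localized estimate $\n{Q_\nu f}_6\ll\nu^{5/6}\tau^{1/6}\n{Q_\nu f}_2$. Now the Fourier support of $Q_\nu f$ is the $\nu$-tube around $\Si_\z$: a slab of thickness $\sim\nu$ in the $e_1$ direction intersected with a $\nu$-neighborhood, inside the plane $e_1^\perp$, of the circle of radius $\tau$. I would partition this tube into $\sim(\tau/\nu)^{1/2}$ boxes $\theta$ of dimensions $\sim\nu\times\nu\times(\tau\nu)^{1/2}$, each adapted to an arc of $\Si_\z$ of length $(\tau\nu)^{1/2}$ --- the scale on which the circle departs from its tangent line by $\nu$ --- and decompose $Q_\nu f=\sum_\theta g_\theta$ with $g_\theta$ Fourier-supported in $\theta$. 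On a single box, Bernstein's inequality gives $\n{g_\theta}_6\ll\a{\theta}^{1/2-1/6}\n{g_\theta}_2=(\tau^{1/2}\nu^{5/2})^{1/3}\n{g_\theta}_2=\nu^{5/6}\tau^{1/6}\n{g_\theta}_2$ since $\a{\theta}=\tau^{1/2}\nu^{5/2}$. Reassembling the $g_\theta$ and using $\sum_\theta\n{g_\theta}_2^2\sim\n{Q_\nu f}_2^2$ (finite overlap and Plancherel) then yields the claim.

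\emph{Main difficulty.} The obstacle is the reassembly step: recombining the $L^6$ norms of the $g_\theta$ without losing a power of $\tau/\nu$. A bare triangle inequality loses $(\tau/\nu)^{1/4}$, and there is no $L^6$ square-function estimate for the circle available as a substitute. This is exactly where the curvature of $\Si_\z$ must be exploited: because consecutive caps are genuinely rotated, for each $\theta$ only $O(1)$ of the sums $\theta+\theta'$ overlap a given region, which is the $L^6$ Zygmund phenomenon for the planar circle. I would carry out the reassembly either by a bilinear $TT^{*}$ argument that uses this bounded-overlap property, in the spirit of \cite{KenigRuizSogge1987}, or by quoting the sharp $L^{2}\to L^{6}$ extension estimate for the planar circle in the two curved directions and combining it with a flat Bernstein gain of $\nu^{1/3}$ in the transverse $e_1$ direction; the net power $\nu^{1/2}\cdot\nu^{1/3}\cdot\tau^{1/6}=\nu^{5/6}\tau^{1/6}$ is precisely what is required. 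This is the argument of \cite{Haberman2015a}.

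\emph{Deduction of \eqref{strichartz}.} Write $f=Q_l f+Q_h f$. Since $Q_l f=\sum_{\nu\leq\tau/8}Q_\nu f$, the band estimate \eqref{bandstrichartz} and the convergent geometric series $\sum_{\nu\leq\tau/8}(\nu/\tau)^{1/3}\ll1$ give $\n{Q_l f}_6\ll\n{f}_{\dot X^{1/2}_\z}$. On the Fourier support of $Q_h$ one has $\a{p_\z}\sim\tau^2+\a{\xi}^2$ by \eqref{pbehavior}, so $\lp_\z^{1/2}$ has symbol of modulus $\sim\tau+\a{\xi}\gtrsim\a{\xi}$, and the Sobolev embedding $\dot H^1(\R^3)\hookrightarrow L^6$ gives $\n{Q_h f}_6\ll\n{\a{D}Q_h f}_2\ll\n{\lp_\z^{1/2}Q_h f}_2\leq\n{f}_{\dot X^{1/2}_\z}$. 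Adding the two contributions proves \eqref{strichartz}. (Alternatively, \eqref{strichartz} is the $TT^{*}$ form of the uniform Sobolev inequality $\n{u}_6\ll\n{\lp_\z u}_{6/5}$ of \cite{KenigRuizSogge1987}.)
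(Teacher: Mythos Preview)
The paper does not prove this lemma; it is quoted from \cite{KenigRuizSogge1987,Haberman2015a} and stated without argument. Your sketch is correct and is essentially the argument one finds in those references: reduce \eqref{bandstrichartz} to $\n{Q_\nu f}_6\ll \nu^{5/6}\tau^{1/6}\n{Q_\nu f}_2$, then use Bernstein in the flat $e_1$ direction together with the two-dimensional Stein--Tomas extension estimate for the circle to get the required constant; \eqref{strichartz} then follows by dyadic summation at low modulation and Sobolev embedding at high modulation, exactly as you wrote.

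One minor comment: the cap decomposition you set up at the outset is not really needed once you decide (in your ``Main difficulty'' paragraph) to invoke the $L^2\to L^6$ extension theorem for the circle directly---that estimate already contains the cap reassembly. The cap decomposition is the right starting point only if you intend to carry out the C\'ordoba-style bilinear argument by hand. Also, your parenthetical that \eqref{strichartz} is the $TT^*$ form of the KRS uniform Sobolev inequality is morally right but not literally: $TT^*$ for $\abs{p_\z}^{-1/2}(D):L^2\to L^6$ gives boundedness of $\abs{p_\z}^{-1}(D):L^{6/5}\to L^6$, whereas KRS bounds $p_\z^{-1}(D)$; the multiplier $p_\z/\abs{p_\z}$ is discontinuous on $\Si_\z$, so passing between the two requires a little care (though it does go through).
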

We also have the dual estimates
\begin{align}
\label{dualbandstrichartz}
\n{Q_\nu f}_{X^{-1/2}_\z} & \ll (\nu/\tau)^{1/3} \n{f}_{6/5}\\
\label{dualstrichartz}
\n{f}_{X^{-1/2}_\z} & \ll \n{f}_{6/5}.
\end{align}

\section{Averaging estimates}
\label{averaging}
We will need to average various norms with respect to parameters $(\tau, U)$, which will be chosen from the set $[2,\infty) \times O(3)$. In order to distinguish this averaging from integration over physical space, we will use probabilistic notation.

Let $(X,\si)$ be a finite measure space such that $\si(X)>0$. Let $Z$ be an integrable function on $X$. We write the average of $Z$ over $X$ as
\begin{align*}
\E[Z \mid X] =\si(X)^{-1} \int_X Z\,d\si.
\end{align*}
Similarly, for a measurable subset $Y$ of $X$, we write
\begin{align*}
\Pr[Y\mid X] & = \frac{\si(Y\cap X)}{\si(X)}.
\end{align*}
Define the $L^p$ average of $Z$ over $X$ by
\[\E^p[Z\mid X] = \frac{ \n{Z}_{L^p(X,d\si)}}{\n{1}_{L^p(X,d\si)}}.\]
Unless otherwise specified, the set $X$ will be the orthogonal group $O(3)$, and $\si$ will be normalized Haar measure on $O(3)$. 

Given a measurable function $Z$ on $[\tau_*,2\tau_*] \times O(3)$, define 
\[\E^p_{\tau_*}[Z] = \E^p[Z\mid [\tau_*,2\tau_*] \times O(3)],\]
where the average here is taken with respect to the measure $m$ on $[2, \infty) \times O(3)$ given by
\[dm(\tau,U) = (\tau \log \tau)^{-1} \,d\tau\,d\si(U).\]
For a positive integer $K$, define
\[\ti\E^p_K[Z] = \E^p[Z\mid [2^K,2^{K^2}] \times O(3)]\]
and
\[\ti \Pr_K[Y] = \Pr[Y\mid [2^K,2^{K^2}] \times O(3)],\]
Note that the quantity $m([2^K,2^{K^2}] \times O(3))$ is given by 
\begin{equation}
\label{logk}
\int_{2^K}^{2^{K^2}} (\tau \log \tau)^{-1} \,d\tau \,d\si(U) \sim \log K.
\end{equation}
On each dyadic interval $[\tau^*,2\tau^*]$, the weight $(\tau \log \tau)^{-1}$ is approximately constant. Thus we can estimate $\ti \E_k^p[Z]$ by
\[\ti \E_K^p[Z]^p \sim (\log K)^{-1} \sum_{2^K \leq \tau_* < 2^{K^2}} (\log \tau_*)^{-1} \E_{\tau_*}^p[Z]^p.\]

We will need the following property of the Haar measure: if $f$ is an integrable function on $S^{2}$, then for any fixed $\th\in S^{2}$ we have the identity
\begin{equation}
\label{haarformula}
\E[f(U\cd \th) \mid O(3)]= \E[f(\om)\mid S^2],
\end{equation}

We will use the averaging in $\tau$ to take advantage of the extra decay in expressions of the form $(\ld /\tau)^\al \n{P_\ld f}_p$, where $\ld \ll \tau$. Namely, if $\al > 0$ and $p \in [2,\infty)$, then we have frequency convolution estimate
\begin{equation}
\label{loggain}
\ti\E_K^p\left[\sum_{\ld \ll \tau} (\ld/\tau)^{\al} (\log \tau)^{1/p} \n{P_\ld f}_p\right] \ll  (\log K)^{-1/p} \n{f}_p.
\end{equation}
To see this, we recall the normalization~\eqref{logk} and use Young's inequality, which gives
\begin{align*}
\ns{\sum_{\ld \ll \tau} (\ld/\tau)^{\al} \n{P_\ld f}_p}_{L^p([2^K,2^{K^2}], \tau^{-1} \,d\tau)} & \ll \sps{\sum_\ld \n{P_\ld f}^p_p}^{1/p} .
\end{align*}
By the Besov embedding $L^p \subset B^0_{p,p}$, the right hand side is bounded by $\n{f}_p$.
\begin{lemma}
\label{avgestimates}
Let $p \in [2,\infty]$ and let $1/p'=1-1/p$. Let $\mu,\nu,\ld$ be dyadic integers, such that $\mu,\nu\ll \ld$. Then
\begin{align}
\label{pavg}
\E^p[\j{\ld/\nu}^{1/p}\j{\ld/\mu}^{1/p} \n{P^{Ue_1}_{\leq \mu} P^{Ue_2}_{\leq \nu} P_\ld u}_p] &\ll \n{ u}_p\\
\label{pavgdual}
\E^{p'}[\j{\ld/\nu}^{1/p}\j{\ld/\mu}^{1/p} \n{P^{Ue_1}_{\leq \mu} P^{Ue_2}_{\leq \nu} P_\ld u}_{p'}]& \ll \n{ u}_{p'}
\end{align}
If $p \in [2,4]$, then we also have
\begin{equation}
\label{qavg}
\E^p_{\tau*}[(1 + \log_+ (\tau/\nu))^{5(1/p-1/2)} \j{\ld/\nu}^{3/p-1/2}\n{Q_{\leq \nu}^{\tau(Ue_1+iUe_2)} P_\ld u}_p]  \ll \n{u}_p
\end{equation}
\end{lemma}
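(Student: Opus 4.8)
The plan is to reduce all three inequalities to a single mechanism: the projection operators $P^{Ue_1}_{\leq \mu}$, $P^{Ue_2}_{\leq \nu}$, $C^{\z}_{\leq \nu}$ are Fourier multipliers supported near a hyperplane (or, for $C$, a neighborhood of a sphere of radius $\tau$), and averaging over $U \in O(3)$ spreads the directions of these hyperplanes uniformly over $S^2$ by~\eqref{haarformula}. So the heart of the matter is that a fixed frequency $\xi$ with $\a\xi \sim \ld$ lies in the slab $\{|\omega\cd\xi|\leq\nu\}$ only for a proportion $\sim \nu/\ld$ of directions $\omega\in S^2$; combined with Plancherel this gives the gain $\langle\ld/\nu\rangle^{-1}$ in $L^2$, and interpolation against the trivial $L^\infty\to L^\infty$ (or $L^1\to L^1$) bound with no gain produces the exponent $1/p$ in~\eqref{pavg} and its dual~\eqref{pavgdual}.

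For~\eqref{pavg}: fix the frequency annulus $|\xi|\sim\ld$ and think of $u=P_\ld u$. By Minkowski and duality it suffices to bound $\E^p[\cdot]$ of the single slab projection $P^{Ue_1}_{\leq\mu}P^{Ue_2}_{\leq\nu}$; since $\mu,\nu\ll\ld$ these are genuinely different directions on $S^2$ and, after using~\eqref{haarformula} to replace the $O(3)$-average by an $S^2$-average of each of $Ue_1$ and $Ue_2$, I would first establish the $p=2$ case by writing $\|P^{\omega_1}_{\leq\mu}P^{\omega_2}_{\leq\nu}P_\ld u\|_2^2$ via Plancherel, integrating in $\omega_1,\omega_2$, and bounding the resulting kernel $\int_{S^2}\chi(|\omega\cd\xi|/\nu)\,d\omega \lesssim \langle\ld/\nu\rangle^{-1}$ for $|\xi|\sim\ld$ (and similarly for $\mu$). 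Together with the obvious pointwise-in-$U$ bound $\|P^{Ue_1}_{\leq\mu}P^{Ue_2}_{\leq\nu}P_\ld u\|_p\lesssim\|u\|_p$ at $p=\infty$, the real-interpolation identity $\E^p[\cdot]^p$ interpolates between $p=2$ and $p=\infty$ and yields~\eqref{pavg}; the Littlewood-Paley square function then removes the restriction to a single annulus $\ld$. The dual statement~\eqref{pavgdual} is the adjoint of the same operator and follows either by duality in $L^p(O(3))$ together with self-adjointness of the $P$'s, or by the same interpolation run between $p'=2$ and $p'=1$.

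For~\eqref{qavg}: here $Q^{\z}_{\leq\nu}=P^{e_1}_{\leq\nu}C^{\z}_{\leq\nu}$ with $\z=\tau(Ue_1+iUe_2)$, and the extra geometric feature is that $C^{\z}_{\leq\nu}$ localizes $\xi^\perp$ to a $\nu$-neighborhood of the sphere $\{|\xi^\perp+\tau e_2|=\tau\}$. This is a curved hypersurface, so instead of the flat slab estimate one should use the Carleson–Sj\"olin / Stein–Tomas type bound (the paper flagged this: ``an additional averaging estimate which follows from the Carleson–Sj\"olin theorem''). I would split the $U$-average into the $P^{e_1}_{\leq\nu}$ part, which by~\eqref{pavg} at the relevant exponent supplies $\langle\ld/\nu\rangle^{1/p}$, and the $C^{\z}_{\leq\nu}$ part, which after averaging $Ue_2$ over $S^2$ becomes an average of spherical-shell multipliers of thickness $\nu$ and radius $\tau$; for $p\in[2,4]$ the $L^2\to L^2$ average gains the curvature-improved factor accounting for the exponent $3/p-1/2$ rather than $1/p$, and then interpolation with $p=2$ (no loss) against the $p=4$ endpoint — where the logarithmic factor $(1+\log_+(\tau/\nu))$ enters to absorb an overlap/Knapp loss in the endpoint estimate — gives the stated bound. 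The $\E^p_{\tau_*}$ average (rather than $\E^p$ over $O(3)$ alone) is harmless since the $\tau$-integral over a dyadic block $[\tau_*,2\tau_*]$ only costs a constant.

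The main obstacle I expect is the curved estimate in~\eqref{qavg}: getting the precise exponent $3/p-1/2$ with only a logarithmic (not power) loss requires a genuine Fourier-restriction input for the sphere of radius $\tau$ together with a careful accounting of how many of the spherical shells (as $Ue_2$ varies) can pass through a fixed frequency $\xi$ with $|\xi|\sim\ld$ and $d(\xi,\Si_\z)\lesssim\nu$ — this is where the Carleson–Sj\"olin theorem and the $\log_+(\tau/\nu)$ factor are forced. The flat estimates~\eqref{pavg}–\eqref{pavgdual}, by contrast, are routine Plancherel-plus-interpolation once the slab-measure bound $\int_{S^2}\chi(|\omega\cd\xi|/\nu)\,d\omega\lesssim\langle\ld/\nu\rangle^{-1}$ is in hand.
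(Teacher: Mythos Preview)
Your proposal is essentially correct and follows the paper's approach: Plancherel at $p=2$ plus interpolation against the trivial $L^\infty$/$L^1$ bounds for~\eqref{pavg}--\eqref{pavgdual}, and for~\eqref{qavg} an $L^2$ endpoint combined with an $L^4$ endpoint using Carleson--Sj\"olin, then interpolation.

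Two points where the paper's execution differs from what you sketched. First, for the $L^2$ case of~\eqref{qavg} the paper does \emph{not} treat the $\tau$-average as inert: it passes to polar coordinates $u=\tau\omega\in\R^3$, so that the joint $(\tau,U)$-average becomes a Euclidean volume integral and the support of the integrand is visibly a box of dimensions $\tau(\j{\ld/\nu}^{-1}\times\j{\ld/\nu}^{-1}\times 1)$, giving $\j{\ld/\nu}^{-2}$ in one stroke. Your $U$-only argument (treating $\tau$ as fixed) also works, but requires separately bounding the arc-length of the constraint on $Ue_2$ along the great circle orthogonal to $Ue_1$. Second, at the $p=4$ endpoint the Carleson--Sj\"olin theorem is used as a \emph{pointwise} bound $\n{C^{\z}_{\leq\nu}}_{L^4\to L^4}\ll(1+\log_+(\tau/\nu))^{5/4}$, uniform in $(\tau,U)$; the averaging is then applied only to the slab factor $P^{Ue_1}_{\leq\nu}$ via the already-proven~\eqref{pavg}, which supplies $\j{\ld/\nu}^{-1/4}$. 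There is no averaging of $C$ at $p=4$. Finally, your square-function remark is unnecessary: the lemma is stated for a single fixed dyadic $\ld$.
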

\begin{proof} 
When $p=2$, all of these estimates follow from Plancherel's theorem and Fubini. To prove the first estimate~\eqref{pavg} when $p=2$, we write
\begin{align*}
\E^2[\n{P^{Ue_1}_{\leq \nu} P^{Ue_2}_{\leq \mu}P_\ld u}_2]^2 & \sim \int_{\R^n} \E\as{\phi\sps{\frac \xi \ld} \chi\sps{\frac{\xi\cd (Ue_1)}{\mu},\frac{\xi \cd( Ue_2)}{\nu}}}^2 \a{\hat u(\xi) }^2\,d\xi
\end{align*}
Here $\phi$ is supported on an annulus, and $\chi$ is supported on a square. Since $U$ is orthogonal, we have $\xi \cd (Ue_i) = (U^{-1} \xi) \cd e_i$. Thus we can compute the last integral using the identity~\eqref{haarformula}:
\begin{align*}
\E\as{\phi\sps{\frac \xi \ld} \chi\sps{\frac{(U^{-1}\xi)\cd e_1}{\mu},\frac{(U^{-1}\xi) \cd e_2}{\nu}}}^2&\ll \sup_{\a{\xi} \sim \ld} \E\as{\chi\sps{\frac{\a{\xi} \om \cd e_1}{\mu},\frac{\a{\xi} \om \cd e_2}{\nu}}}^2.
\end{align*}
The quantity on the right is bounded by the area of the intersection of the unit sphere with a rectangle centered at the origin of size proportional to $(\mu/\ld) \times (\nu/\ld)\times 1$. Since the area of such a region is bounded by $\j{\ld/\mu}^{-1} \j{\ld/\nu}^{-1}$, we have
\[\E^2[\n{P^{Ue_1}_{\leq \nu} P^{Ue_2}_{\leq \mu}P_\ld u}_2]^2\ll  \j{\ld/\mu}^{-1} \j{\ld/\nu}^{-1} \n{u}_2^2.\]
The $p=2$ case of the last estimate~\eqref{qavg} is proven in the same way. Since $\a{p_\z(\xi)} \ll \tau \nu$ on the Fourier support of $Q_\nu$, it suffices to show that 
\begin{equation}
\label{etsgoal}
\E_{\tau_*}[Z(\tau,U)] \ll \j{\ld/\nu}^{-1},
\end{equation}
where 
\begin{align*}
Z(\tau,U) &= \sup_{\a\xi \sim \ld} \as{\chi\sps{\frac{\a{-\a\xi^2 - 2 \tau \xi\cd (U e_2)} + 2\tau \a{\xi \cd (U e_1)}}{\tau\nu}}}^2
\end{align*}
and $\chi$ is compactly supported. Using the identity~\eqref{haarformula} again yields
\begin{align}
\label{etssup}
\E_{\tau_*} [Z] & \ll \sup_{\a{\xi}\sim\ld} \frac{1}{\tau_*} \int_{\tau_*}^{2\tau_*}\int_{S^2}\as{\chi\sps{\frac{\a{-\a\xi^2 - 2 \tau \a \xi \om \cd e_2} + 2\tau \a{\a \xi \om \cd e_1}}{\tau\nu}}}^2 \,dS(\om)\,d\tau .
\end{align}
View $(\tau,\om)$ as polar coordinates on $\R^3$, and change variables to $u = \tau\om$. In the annular region $\{\a u \in [\tau_*,2\tau_*]\}$, the volume element $du$ is bounded below by $\tau_*^{2} \, dS(\om) \,d\tau$. Thus the integral on the right hand side of~\eqref{etssup} is bounded by
\[\frac{1}{\tau_*^3} \int_{\a u \in [\tau_*,2\tau_*]}\as{\chi\sps{\frac{\a{\xi}( \a{-\a\xi - 2 u \cd e_2} + 2 \a{ u \cd e_1})}{\tau\nu}}}^2 \,du.\]
The integrand is supported on a rectangle of size proportional to $\tau(\j{\ld/\nu}^{-1} \times \j{\ld/\nu}^{-1} \times 1)$. So the integral is bounded by the quantity $\j{\ld/\nu}^{-2}$, which establishes~\eqref{etsgoal}. This shows that
\[\E^2_{\tau^*}[\n{Q_{\leq \nu} P_\ld u}_2]^2 \ll \j{\ld/\nu}^2 \n{u}_2^2.\]
To prove the $p \neq 2$ case of the first two estimates~\eqref{pavg} and~\eqref{pavgdual}, we define an operator $T$ by
\begin{align*}
Tu(U,x) & := P^{U e_1}_{\leq \mu} P^{U e_2}_{\leq \nu} P_\ld u(x).
\end{align*}
We have shown that $T$ satisfies the $L^2$ bound
\[\n{T}_{L^2(\R^3)\to L^2(O(3) \times \R^3)} \ll \j{\ld/\nu}^{-1/2} \j{\ld/\mu}^{-1/2}.\]
On the other hand, the operator $T$ is built out of operators that are bounded on $L^\infty$ and $L^1$. Thus $T$ also satisfies the bounds
\[ \n{T}_{L^\infty(\R^3) \times L^\infty(O(3)\times \R^3)}+ \n{T}_{L^1(\R^3)\to L^1(O(3) \times \R^3)}\ll 1.\]
By interpolation, we obtain the $L^p$ bounds~\eqref{pavg} and~\eqref{pavgdual}.

To prove the $p\neq 2$ case of the last estimate~\eqref{qavg} for $Q_{\leq \nu}^{\z(\tau,U)}$, we interpolate with an $L^4$ bound. The operator $Q_{\leq \nu}$ factors as
\[Q^{\z(\tau,U)}_{\leq \nu} = C^{\z(\tau,U)}_{\leq \nu} P^{Ue_1}_{\leq \nu},\]
where the operator $C_{\leq \nu}^{\z(\tau,U)}$, defined by
\[C_{\leq \nu}^{\z(\tau,U)} = \chi((\a{D^\perp + \tau e_2} - \tau)/\nu),\]
localizes the vector $\xi^\perp = (0,\xi \cd U e_2, \xi\cd Ue_3)$ to a neighborhood of a circle of radius $\tau$ and center $(\xi\cd (Ue_1), \tau e_2)$. The Carleson-Sj\"olin theorem (\cite{CarlesonSjolin1972}) implies that $C^{\z(\tau,U)}_{\leq \nu}$ satisfies the $L^4$ bound
\begin{equation}
\label{carlesonsjolin}
\n{C_{\leq \nu}^{\z(\tau,U)}}_{L^4(\R^3) \to L^4(\R^3)} \ll (1 + \log_+(\tau/\nu))^{5/4}.
\end{equation}
This estimate (modulo rescaling and modulation) is explicit in~\cite{Cordoba1977}. Thus, by combining the $L^4$ bound~\eqref{carlesonsjolin} with the case $p=4$ of the bound~\eqref{pavg} that we have already established, we obtain the $L^4$ bound
\begin{align*}
\E^4_{\tau^*}[\n{Q_{\leq \nu}^{\z(\tau,U)} u}_4]&\ll(1 + \log_+(\tau/\nu))^{5/4} \E^4_{\tau^*}[\n{P^{U e_1}_{\ll \nu} u}_4]\\
&\ll (1 + \log_+(\tau/\nu))^{5/4} \j{\ld/\nu}^{-1/4} \n{u}_4.
\end{align*}
Interpolating this $L^4$ estimate with the $L^2$ estimate we have already established, we obtain the $L^p$ estimate~\eqref{qavg}.
\end{proof}

\section{Estimates for the amplitude}
\label{amplitude}
To analyze the behavior of $\pb^{-1}$, we introduce an auxiliary function $\eta$ to use as a mollifier.

Let $\eta: \C \to \R$ be a smooth compactly supported bump function, such that $\int_\C \eta = 1$ and
\begin{align}
\label{moments}
\int z_1^{\al_1}z_2^{\al_2} \eta(z) \,dz_1 \,dz_2 = 0,
\end{align}
for every $\al = (\al_1,\al_2)$ such that $1 \leq \al_1 + \al_2 \leq 2M$, where $M$ is some large number to be determined later. The vanishing moment condition~\eqref{moments} ensures that $\hat \eta$ satisfies 
\begin{equation}
\label{momentestimate}
\hat \eta(\xi) = 1 + O(\a{\xi}^{2M+1}).
\end{equation}
Let $e = e_1 + i e_2$, and define the function $\eta^e$ as before by $\eta^e(x) = \eta(x\cd e)$. Define the operator $\ti P^e$ by
\[\ti P^e u = \eta^e * u. \]
Let $\chi^e_\nu$ be the symbol of the Littlewood-Paley projection $P^e_{\nu}$. Since the inverse Fourier transform $\check \eta^e$ is a Schwartz function, we have
\[\a{\na^k_\xi (\chi^e_\nu \check \eta^e)(\xi)} \ll_{k,N} \nu^{-N}\]
for all nonnegative integers $k , l, N$ and uniformly in $\nu \geq 1$. It follows that, for any $p \in [1,\infty]$ and $N > 0$, the product $\ti P^e P^e_\nu$ satisfies the almost orthogonality bound
\begin{equation}
\n{\ti P^e P^e_\nu}_{L^p \to L^p} \ll_N \nu^{-N}
\label{hfhm}
\end{equation}
for all $N> 0$, and uniformly in $\nu \geq 1$. On the other hand, the vanishing property~\eqref{momentestimate} implies that
\[\a{\na_\xi^k (\chi^e_\nu(1- \check \eta^e))(\xi)} \ll_{k} \nu^{2M+1-k}\]
for all nonnegative integers $k \leq M$ and uniformly in $\nu \leq 1$. It follows that for $M$ sufficiently large, the product $(1-\ti P^e) P_\nu^e$ satisfies the almost orthogonality bound
\begin{equation}
\n{(1-\ti P^e)P^e_\nu}_{L^p \to L^p} \ll \nu^M \n{u}_{L^p}
\label{lfhm}
\end{equation}
for all $p \in [1,\infty]$ and uniformly in $\nu \leq 1$.

We apply this machinery to show that the behavior of $\pb$ near its the characteristic set can be ignored if everything is localized. Since the next lemma pertains to functions of two variables, we write $\ti P$ and $\pb$ instead of $\ti P^e$ and $\pb_e$.
\begin{lemma}
Let $p \in [1,\infty]$. If $u$ is a function on $\R^2$  supported in the unit ball $B = B(0,1)$, then
\begin{equation}
\n{\pb^{-1}u}_{\infty} \ll \n{P_{\leq 1} u}_\infty + \n{\pb^{-1} P_{>1} u}_\infty
\label{dbiloc}
\end{equation}
and
\begin{equation}
\n{\pb^{-1}u}_{L^p(B)} \ll \n{P_{\leq 1} u}_{p} +  \n{\pb^{-1} P_{>1} u}_{p}
\label{dbploc}
\end{equation}
\end{lemma}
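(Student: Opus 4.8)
The plan is to bound $\pb^{-1}u$ by writing it as a sum of three pieces. By linearity, $\pb^{-1}u = \pb^{-1}P_{\le 1}u + \pb^{-1}P_{>1}u$, and since $\n{\cdot}_{L^p(B)}\le\n{\cdot}_{L^p(\R^2)}$ the term $\pb^{-1}P_{>1}u$ is already controlled by the second term on the right of each inequality; so everything reduces to estimating $\pb^{-1}P_{\le 1}u$ in terms of $\n{P_{\le 1}u}$ and $\n{\pb^{-1}P_{>1}u}$. For this I would split off the behaviour of $\pb$ at its characteristic set $\{\xi=0\}$ with the mollifier, $\pb^{-1}P_{\le 1}u = (I-\ti P)\pb^{-1}P_{\le 1}u + \ti P\pb^{-1}P_{\le 1}u$.

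The ``elliptic'' piece $(I-\ti P)\pb^{-1}P_{\le 1}u$ does not use the support hypothesis. Decomposing $P_{\le 1}=\sum_{\nu\le 1}P_\nu$ dyadically, the symbol of $(I-\ti P)\pb^{-1}P_\nu$ is $(1-\hat\eta(\xi))(i\xi_1-\xi_2)^{-1}$ times the symbol of $P_\nu$, supported where $\a{\xi}\sim\nu$; by~\eqref{momentestimate} the factor $1-\hat\eta(\xi)=O(\a{\xi}^{2M+1})$ kills the pole of $(i\xi_1-\xi_2)^{-1}$ at the origin, so what remains is a symbol of size $O(\nu^{2M})$ with the natural derivative bounds at scale $\nu$, hence a convolution kernel of $L^1$-norm $\ll\nu^{2M}$. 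Since $\nu\le 1$, on this frequency annulus $\chi(\a{\xi})\equiv 1$ (for all but the single largest dyadic value of $\nu$, which is a harmless bounded term, handled by dividing the symbol by $\chi$ where the latter is nonzero), so $(I-\ti P)\pb^{-1}P_\nu$ equals the same operator composed with $P_{\le 1}$. Thus $\n{(I-\ti P)\pb^{-1}P_\nu f}_{L^q}\ll\nu^{2M}\n{P_{\le 1}f}_{L^q}$ for every $q\in[1,\infty]$, and summing the geometric series over dyadic $\nu\le 1$ bounds this piece by $\n{P_{\le 1}u}_{L^q}$, which suffices both for $q=\infty$ and for $q=p$.

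For the remaining piece, write $\ti P P_{\le 1}=\ti P-\ti P P_{>1}$ and commute $\pb^{-1}$ past the Fourier multipliers $\ti P$ and $P_{>1}$: this gives $\ti P\pb^{-1}P_{\le 1}u=\pb^{-1}(\ti P u)-\eta*(\pb^{-1}P_{>1}u)$. The second term is $\ll\n{\pb^{-1}P_{>1}u}$ by Young's inequality. For the first term the support hypothesis enters: $\ti P u=\eta*u$ is supported in the fixed ball $2B$, so Lemma~\ref{dbli} (after a trivial rescaling) gives $\n{\pb^{-1}(\ti P u)}_{L^\infty}\ll\n{\ti P u}_{L^\infty}$, while for $w\in B$ the value $\pb^{-1}(\ti P u)(w)$ is the convolution of $\ti P u$ against the Cauchy kernel truncated to $\a{z}\le 3$, which is integrable (the kernel $(2\pi z)^{-1}$ is locally $L^1$ in the plane), so that $\n{\pb^{-1}(\ti P u)}_{L^p(B)}\ll\n{\ti P u}_{L^p}$. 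It then remains to estimate $\n{\ti P u}$: splitting $\ti P u=\eta*P_{\le 1}u+\eta*P_{>1}u$, the first term is $\ll\n{P_{\le 1}u}$ by Young, and for the second I would use $P_{>1}u=\pb(\pb^{-1}P_{>1}u)$ to transfer the derivative onto the smooth, compactly supported mollifier, $\eta*P_{>1}u=(\pb\eta)*(\pb^{-1}P_{>1}u)$, which is $\ll\n{\pb^{-1}P_{>1}u}$. Assembling the three pieces proves both inequalities.

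I expect the last piece to be the crux: $\pb^{-1}$ is unbounded on $L^\infty$ and on $L^p(B)$ because its symbol has a pole at the characteristic set, so $\pb^{-1}P_{\le 1}u$ cannot be controlled by $\n{P_{\le 1}u}$ alone — the gain has to come from the localization of $u$, and concretely from the two mechanisms above: the many vanishing moments of $\eta$ (which remove the pole from the error operator $I-\ti P$) and the identity $\eta*P_{>1}u=(\pb\eta)*(\pb^{-1}P_{>1}u)$ (which lets the high-frequency part be absorbed into $\n{\pb^{-1}P_{>1}u}$ instead of the uncontrollable $\n{P_{>1}u}$). The one point that needs care is checking that $(I-\ti P)\pb^{-1}P_\nu$ genuinely factors through $P_{\le 1}$, which relies on $\chi\equiv 1$ near the origin.
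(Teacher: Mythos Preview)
Your proof is correct and follows essentially the same route as the paper: split using the mollifier $\ti P$ versus $I-\ti P$, use the vanishing moments~\eqref{momentestimate} to control the $(I-\ti P)$ piece, and use the compact support of $\ti P u$ together with Lemma~\ref{dbli} for the $\ti P$ piece. Two of your steps are in fact slightly cleaner than the paper's: you bound $\n{\pb^{-1}(\ti P u)}_{L^p(B)}$ directly by Young's inequality with the truncated Cauchy kernel, whereas the paper routes through $L^\infty$ via Lemma~\ref{dbli} and then applies Bernstein to return to $L^p$; and your identity $\eta*P_{>1}u=(\pb\eta)*(\pb^{-1}P_{>1}u)$ replaces the paper's dyadic sum using the almost-orthogonality bound~\eqref{hfhm}.
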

\begin{proof}
Decompose $u$ as $u = \ti P u + (1-\ti P) u$. Since $\ti P u$ is supported in $B(0,2)$, we have $\n{\pb^{-1} \ti P u}_{L^p(B)} \ll \n{\pb^{-1} \ti Pu }_\infty$. By Lemma~\ref{dbli} and Bernstein's inequality
\begin{align*}
\n{\pb^{-1} \ti P u}_\infty & \ll \n{\ti P u}_\infty\\
&\ll \n{\ti P P_{\leq 1} u}_\infty + \sum_{\nu > 1} \n{\ti P P_\nu u}_\infty \\
& \ll \n{P_{\leq 1} u}_p + \sum_{\nu > 1} \nu^{2/p} \n{\ti P P_\nu u}_p
\end{align*}
By the almost orthogonality bound~\eqref{hfhm}, we have
\begin{align*}
\sum_{\nu > 1} \nu^{2/p} \n{\ti P P_\nu u}_p & \ll \n{P_{\leq 1} u}_p + \sum_{\nu >1} \nu^{2/p - N} \n{P_\nu u}_p\\
&\ll \n{P_{\leq 1} u}_p + \sum_{\nu >1} \nu^{2/p+1 - N} \n{P_\nu \pb^{-1} u}_p\\
&\ll \n{P_{\leq 1} u}_p + \n{\pb^{-1} P_{>1} u}_p.
\end{align*}

For $(1-\ti P)u$ we use the almost orthogonality bound~\eqref{lfhm} and the fact that $(1-\ti P)$ is bounded on $L^p$ for any $p$. Thus
\begin{align*}
\n{\pb^{-1} (1-\ti P) u}_p & \ll \sum_{\nu \leq 1} \nu^{-1} \n{(1- \ti P) P_\nu u}_p + \n{\pb^{-1} P_{>1} u}_p\\
&\ll \sum_{\nu \leq 1} \nu^{M-1}  \n{P_{\leq 1} u}_p + \n{\pb^{-1} P_{>1} u}_p\\
&\ll \n{P_{\leq 1} u}_p + \n{\pb^{-1} P_{>1} u}_p.
\end{align*}
\end{proof}
Using the localization estimate~\eqref{dbploc}, we show that $\pb_{Ue}^{-1} \na f$ is bounded on average in $L^2(B)$ with a slight loss of regularity.
\begin{lemma}
\label{h1}
Let $f \in H^s(\R^3)$, where $s>0$, and suppose that $\supp f \subset B$, where $B = B(0,1)$. Then
\[\E^2[\n{\pb^{-1}_{Ue} \na f}_{L^2(B)}] \ll_s \n{f}_{H^s}.\]
\end{lemma}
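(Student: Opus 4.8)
The plan is to apply the localization estimate~\eqref{dbploc} slicewise and then reduce everything to the averaging bound~\eqref{pavg}. I would work in an orthonormal frame adapted to $U$: write $Ue = Ue_1 + iUe_2$, let $\xi^\perp$ denote the component of $\xi$ in $\Span(Ue_1,Ue_2)$, so that $P^{Ue}_{\leq \nu} = \chi(\a{D\cd Ue}/\nu)$ is the projection to $\a{\xi^\perp}\leq \nu$, and treat each component $\pb^{-1}_{Ue}\pd_j f$ of $\pb^{-1}_{Ue}\na f$ separately. Since $\supp\pd_j f\subset B$, the slice of $\pd_j f$ at each fixed value of the $Ue_3$-coordinate is supported in the two-dimensional unit ball, so~\eqref{dbploc} applies in the $\Span(Ue_1,Ue_2)$-plane (on which $\pb^{-1}_{Ue}$ acts as a convolution); squaring the resulting inequality, integrating in the $Ue_3$-variable and summing over $j$ would give
\[
\n{\pb^{-1}_{Ue}\na f}_{L^2(B)} \ll \n{P^{Ue}_{\leq 1}\na f}_{L^2} + \n{\pb^{-1}_{Ue}P^{Ue}_{>1}\na f}_{L^2},
\]
where on $\supp P^{Ue}_{>1}$ the symbol $(i\xi\cd Ue_1 - \xi\cd Ue_2)^{-1}$ of $\pb^{-1}_{Ue}$ is a bounded smooth function of $\xi^\perp$, so the second term is an honest $L^2$ multiplier bound. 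I would then decompose $f = f_{\leq 1} + \sum_{\ld>1} f_\ld$ in full frequency --- this must be done \emph{after} invoking~\eqref{dbploc}, since $P_\ld$ destroys the support hypothesis --- noting that the $f_{\leq 1}$ contribution to either term is $\ll\n{f_{\leq 1}}_2$ trivially, so that it suffices to average the contributions of the $f_\ld$, $\ld>1$.

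For the first term, the derivative costs $\j\ld$ while~\eqref{pavg} with $p=2$ and $\mu=\nu=1$ --- using that $\a{\xi^\perp}\leq 1$ forces $\a{\xi\cd Ue_1},\a{\xi\cd Ue_2}\leq 1$, so that $P^{Ue}_{\leq 1}$ factors through $P^{Ue_1}_{\leq 1}P^{Ue_2}_{\leq 1}$ --- gains a factor $\j\ld^{-1}$; hence $\E^2[\n{P^{Ue}_{\leq 1}\na f_\ld}_2]\ll\j\ld\cd\j\ld^{-1}\n{f_\ld}_2 = \n{f_\ld}_2$ and $\E^2[\n{P^{Ue}_{\leq 1}\na f}_2]\ll\n{f_{\leq 1}}_2 + \sum_{\ld>1}\n{f_\ld}_2$. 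For the second term I would further split $P^{Ue}_{>1}=\sum_{1<\nu\ll\ld}P^{Ue}_\nu$ on the Fourier support of $\hat f_\ld$, where $\a{\xi^\perp}\leq\a\xi\ll\ld$. On each $P^{Ue}_\nu$ piece $\pb^{-1}_{Ue}$ has symbol of size $\nu^{-1}$ and $\na$ has size $\ll\ld$, so $\n{\pb^{-1}_{Ue}P^{Ue}_\nu\na f_\ld}_2\ll(\ld/\nu)\n{P^{Ue}_\nu f_\ld}_2$; and since $\a{\xi^\perp}\sim\nu$ forces $\a{\xi\cd Ue_1},\a{\xi\cd Ue_2}\ll\nu$, \eqref{pavg} gives $\E^2[\n{P^{Ue}_\nu f_\ld}_2]\ll\j{\ld/\nu}^{-1}\n{f_\ld}_2$, whence $\E^2[\n{\pb^{-1}_{Ue}P^{Ue}_\nu\na f_\ld}_2]\ll(\ld/\nu)\j{\ld/\nu}^{-1}\n{f_\ld}_2\ll\n{f_\ld}_2$: the averaging gain exactly absorbs the loss of $\pb^{-1}_{Ue}\na$. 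Because the summands $\pb^{-1}_{Ue}P^{Ue}_\nu\na f_\ld$ have pairwise disjoint Fourier supports for distinct $\nu$, Pythagoras in $\nu$ combined with Minkowski's inequality for $\n{\cd}_{L^2(O(3))}$ would give $\E^2[\n{\pb^{-1}_{Ue}P^{Ue}_{>1}\na f_\ld}_2]\ll(\log\ld)^{1/2}\n{f_\ld}_2$, hence $\E^2[\n{\pb^{-1}_{Ue}P^{Ue}_{>1}\na f}_2]\ll\n{f_{\leq 1}}_2 + \sum_{\ld>1}(\log\ld)^{1/2}\n{f_\ld}_2$.

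Combining the two terms and applying Cauchy--Schwarz in $\ld$ would finish:
\[
\E^2[\n{\pb^{-1}_{Ue}\na f}_{L^2(B)}] \ll \n{f_{\leq 1}}_2 + \sum_{\ld>1}(\log\ld)^{1/2}\n{f_\ld}_2 \ll_s \n{f_{\leq 1}}_2 + \Big(\sum_{\ld>1}(\log\ld)\,\ld^{-2s}\Big)^{1/2}\n{f}_{H^s} \ll_s \n{f}_{H^s},
\]
the series converging precisely because $s>0$. The step I expect to be the crux is the high-modulation term: $\pb^{-1}_{Ue}\na$ loses a full factor $\ld/\nu$ when the frequency of $f$ concentrates in a slab of thickness $\nu$ about the plane $(Ue_3)^\perp$, and~\eqref{pavg} recovers \emph{exactly} $\j{\ld/\nu}^{-1}$ --- just enough to close, the only waste being the logarithmically many dyadic scales $\nu\in(1,\ld]$. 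It is this logarithmic loss that forces the hypothesis $s>0$ (and, downstream, the near-critical regularity assumption on $A$).
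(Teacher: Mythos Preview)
Your argument is correct and follows essentially the same route as the paper: apply~\eqref{dbploc} slicewise, decompose in full Littlewood--Paley scale $\ld$ and partial scale $\nu$, and use~\eqref{pavg} to cancel the $\ld/\nu$ loss from $\pb^{-1}_{Ue}\na$. The one difference is that you exploit $L^2$ orthogonality in $\nu$ to get $(\log\ld)^{1/2}$ whereas the paper simply uses the triangle inequality in both $\nu$ and $\ld$ to get $\log\ld$; this is a harmless refinement since either power of the logarithm is absorbed by $\ld^{-s}$.
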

\begin{proof}
First we apply the localization estimate~\eqref{dbploc}, to obtain
\begin{align*}
\n{\pb^{-1}_{Ue} \na f}_{L^2(B)} & \ll \n{P^{Ue}_{\leq 1} \na f}_{L^2} + \n{\pb_{Ue}^{-1} \na f}_{L^2}.
\end{align*}
We bound both terms using the averaging estimate~\eqref{pavg}.
\begin{align*}
\E^2[\n{P^{Ue}_{\leq 1} \na f}_{L^2}] & \ll \n{\na f_{\leq 1}}_2 + \sum_{\ld>1} \ld \E^2[\n{P_{\leq 1}^{Ue} f_\ld}_2] \\
&\ll \n{f}_2 + \sum_{\ld > 1} \n{f_\ld}_2 \\
&\ll \n{f}_{H^s}.
\end{align*}
Similarly, since $P^{Ue}_\nu f_\ld = 0$ unless $\nu \ll \ld$, we have
\begin{align*}
\E^2[\n{P^{Ue}_{>1} \pb_{Ue}^{-1} \na f}_{L^2}] & \ll \sum_{1 \leq \nu\ll \ld} (\ld/\nu) \E^2[\n{P_{\nu}^{Ue} f_\ld}_2] \\
&\ll \sum_{\ld > 1} (\log \ld) \n{f_\ld}_2 \\
&\ll \n{f}_{H^s}.
\end{align*}
\end{proof}

We now show that the $\pb^{-1}_{U e}$ operator takes compactly supported functions in the Besov space $B^0_{3,1}(\R^3)$ to bounded functions. If the $\pb^{-1}_{Ue}$ operator were replaced by $\a{D}^{-1}$, then this property would hold without any averaging.
\begin{lemma}
\label{phili}
Let $f \in B^0_{3,1}(\R^3)$, and suppose that $\supp f \subset B(0,1)$. Then
\[\E^3[\n{\pb^{-1}_{Ue} f}_\infty] \ll \n{f}_{B^0_{3,1}}.\]
\end{lemma}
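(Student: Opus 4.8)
The plan is to follow the same architecture as the proof of Lemma~\ref{h1}, using the localization estimate~\eqref{dbploc} to reduce to high $\pb$-frequency pieces, and then to run the averaging estimate~\eqref{pavg} at the endpoint $p=3$. First I would apply~\eqref{dbploc} (with $p=3$ on the left, after passing from $\n{\cd}_{L^3(B)}$ to $\n{\cd}_\infty$ via the second display of the previous lemma, or more precisely using~\eqref{dbiloc} directly) to bound
\[
\n{\pb_{Ue}^{-1}f}_\infty \ll \n{P_{\leq 1}f}_\infty + \n{\pb_{Ue}^{-1}P_{>1}f}_\infty .
\]
For the low-frequency term, Bernstein gives $\n{P_{\leq 1}f}_\infty \ll \n{P_{\leq 1}f}_3 \ll \n{f}_{B^0_{3,1}}$, with no averaging needed. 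So the work is entirely in the high-frequency term.

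For the high-frequency term I would decompose dyadically in ordinary frequency, $P_{>1}f = \sum_{\ld>1} f_\ld$, and further decompose each $f_\ld$ using the partial Littlewood–Paley projections $P^{Ue_1}_\mu P^{Ue_2}_\nu$ in the two directions spanning the plane of $e$. On the piece with $e$-frequency of size $\sim\max(\mu,\nu)$, the operator $\pb_{Ue}^{-1}$ behaves like division by that frequency, so $\n{\pb_{Ue}^{-1}P^{Ue_1}_\mu P^{Ue_2}_\nu f_\ld}_\infty \ll (\max(\mu,\nu))^{-1}\n{P^{Ue_1}_\mu P^{Ue_2}_\nu f_\ld}_\infty$; then Bernstein converts the $L^\infty$ norm of a function with frequency support in a box of dimensions roughly $\mu\times\nu\times\ld$ into $(\mu\nu\ld)^{1/3}$ times its $L^3$ norm. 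Taking the $L^3$ average over $U\in O(3)$ and invoking~\eqref{pavg} with $p=3$ replaces $\n{P^{Ue_1}_\mu P^{Ue_2}_\nu f_\ld}_3$ by $\j{\ld/\mu}^{-1/3}\j{\ld/\nu}^{-1/3}\n{f_\ld}_3$ (up to Minkowski's inequality to pull the average inside the sums). Combining, the summand is controlled by a constant times
\[
(\max(\mu,\nu))^{-1}(\mu\nu\ld)^{1/3}\j{\ld/\mu}^{-1/3}\j{\ld/\nu}^{-1/3}\n{f_\ld}_3,
\]
and after summing the geometric-type series over $1\le\mu,\nu\ll\ld$ this collapses to $\n{f_\ld}_3$ (the powers of $\ld$ cancel exactly, which is the critical Sobolev scaling built into $B^0_{3,1}$), leaving $\sum_\ld\n{f_\ld}_3 = \n{P_{>1}f}_{B^0_{3,1}}\ll\n{f}_{B^0_{3,1}}$. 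Summing in $\ld$ with no loss is exactly where the $B^0_{3,1}$ (rather than $L^3=B^0_{3,2}$) hypothesis is used.

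The main obstacle I anticipate is bookkeeping at the scaling-critical exponent: because the $\mu,\nu$ sums are at best logarithmically convergent near $\mu,\nu\sim\ld$ or near $\mu,\nu\sim 1$, one must be careful that the averaging gain $\j{\ld/\mu}^{-1/3}\j{\ld/\nu}^{-1/3}$ together with the Bernstein factor $(\mu\nu\ld)^{1/3}$ and the $\pb^{-1}$ gain $(\max(\mu,\nu))^{-1}$ really does produce an absolutely summable series in $\mu$ and $\nu$ (one should check the regime $\mu\sim\nu$ and the regime $\mu\ll\nu$ separately; in the latter the $\pb^{-1}$ gain is $\nu^{-1}$, and the $\mu$-sum is $\sum_{\mu\ll\nu}(\mu/\nu)^{1/3}\sim 1$). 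A secondary point is justifying the interchange of the $O(3)$-average with the dyadic sums via Minkowski's inequality, and making sure the endpoint $p=3$ of~\eqref{pavg} and the endpoint $p=\infty$ of Bernstein are both legitimate, which they are since~\eqref{pavg} is stated for all $p\in[2,\infty)$ and the functions in question are frequency-localized.
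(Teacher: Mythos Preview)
Your overall architecture matches the paper's proof: apply the localization estimate~\eqref{dbiloc}, decompose dyadically, use Bernstein to pass from $L^\infty$ to $L^3$, and then invoke the averaging estimate~\eqref{pavg} at $p=3$. The high-frequency analysis you sketch is correct, and the $\mu,\nu$ sums are in fact geometrically (not merely logarithmically) convergent, so your anticipated obstacle does not materialize. The paper streamlines things slightly by using the combined $e$-plane projection $P^{Ue}_\nu$ (localizing $\a{\xi\cd e}\sim\nu$) rather than separate projections $P^{Ue_1}_\mu P^{Ue_2}_\nu$, which collapses your double sum to a single sum; but your finer decomposition works equally well.

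There is, however, a genuine gap in your treatment of the first term. The lemma containing~\eqref{dbiloc} is a statement about functions on $\R^2$; when applied fiberwise to a function on $\R^3$, the projection that appears is the \emph{two-dimensional} projection $P^{Ue}_{\leq 1}$ (localizing only $\a{\xi\cd e}\leq 1$), not the full three-dimensional $P_{\leq 1}$. The function $P^{Ue}_{\leq 1}f$ has no frequency constraint in the $Ue_3$ direction, so Bernstein does \emph{not} yield $\n{P^{Ue}_{\leq 1}f}_\infty\ll\n{P^{Ue}_{\leq 1}f}_3$; indeed this bound is false even under the support hypothesis on $f$ (take $f(x)=\chi(x_1,x_2)g(x_3)$ with $g\in B^0_{3,1}(\R)\setminus L^\infty(\R)$). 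Consequently your claim ``with no averaging needed'' is incorrect: this term must be treated exactly like the high-$e$-frequency term. The paper does this by decomposing $P^{Ue}_{\leq 1}f=P^{Ue}_{\leq 1}f_{\leq 1}+\sum_{\ld>1}P^{Ue}_{\leq 1}f_\ld$, applying Bernstein to pick up a factor $\ld^{1/3}$ on each piece, and then using~\eqref{pavg} (with $\mu=\nu=1$, gain $\j{\ld}^{-2/3}$) to make the $\ld$-sum converge in $B^0_{3,1}$. In the paper's write-up both the low- and high-$e$-frequency contributions are folded into a single quantity $Z(U)=\n{f}_3+\sum_{1\leq\nu,\ld}(\ld/\nu)^{1/3}\n{P^{Ue}_{\leq\nu}P_\ld f}_3$, which is then averaged.
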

\begin{proof}
First, we show that for such $f$, we have the estimate
\begin{equation*}
\n{\pb^{-1}_{Ue} f}_\infty \ll Z(U),
\end{equation*}
where
\begin{equation*}
Z (U)= \n{f}_3 + \sum_{1 \leq \nu, \ld} (\ld/\nu)^{1/3} \n{P^{Ue}_{\leq \nu} P_\ld f}_3.
\end{equation*}
To this end, we apply the localization estimate~\eqref{dbiloc}, which gives
\begin{align*}
\n{\pb^{-1}_{Ue} f}_\infty & \ll \n{P^{Ue}_{\leq 1} f}_\infty +\n{\pb^{-1}_{Ue} P^{Ue}_{>1}f}_\infty.
\end{align*}
Next, we decompose $f$ into Littlewood-Paley pieces and apply Bernstein's inequality. We estimate $P^{Ue}_{\leq 1} f$ by
\begin{align*}
\n{P^{Ue}_{\leq 1} f}_\infty & \ll \n{P^{Ue}_{\leq 1} f_{\leq 1}}_\infty +\sum_{\nu,\ld >1} \n{P^{Ue}_{\leq 1} f_\ld}_\infty \\
&\ll \n{f}_3  +\sum_{\nu,\ld >1} \ld^{1/3} \n{P^{Ue}_{\leq 1} f_\ld}_3.\\
&\leq Z(U).
\end{align*}
We estimate $\pb^{-1}_{Ue}P^{Ue}_{>1} f$ in the same way. Note that $P_\nu^{Ue} f_\ld$ vanishes unless $\nu \ll \ld$, so
\begin{align*}
\n{\pb^{-1}_{Ue} P^{Ue}_{>1}f}_\infty & \ll \sum_{1<\nu \ll \ld} \nu^{-1} \n{P^{Ue}_\nu f_\ld}_\infty \\
&\ll \sum_{1<\nu \ll \ld} \nu^{-1/3}\ld^{1/3} \n{P^{Ue}_\nu f_\ld}_3\\
&\ll Z(U).
\end{align*}

Finally, we show that $Z(U)$ is bounded on average. This follows from the averaging estimate~\eqref{pavg}, which gives
\begin{align*}
\E^3[Z(U)] & \ll \n{f}_3 + \sum_{1 \leq \nu \leq \ld} (\nu/\ld)^{1/3} \n{P_\ld f} + \sum_{1 \leq \ld \leq \nu} (\ld/\nu)^{1/3} \n{P_\ld}_3\\
&\ll \n{f}_3 + \sum_{\ld \geq 1} \n{P_\ld f}_3\\
& \ll \n{f}_{B^0_{3,1}}.
\end{align*}
\end{proof}

To state the next lemma, we introduce the mixed-norm notation
\[\n{f}_{L^{p_1}_{x_1} L^{p_2}_{x_2} L^{p_3}_{x_3}}=\ns{\ns{\ns{f(x_1,x_2,x_3)}_{L^{p_3}_{x_3}}}_{L^{p_2}_{x_2}}}_{L^{p_1}_{x_1}}.\]
Given an orthonormal frame $\{e_1,e_2,e_3\}$, we will also use the notation
\[\n{f}_{L^{p_1}_{e_1} L^{p_2}_{e_2} L^{p_3}_{e_3}} = \n{f(x)}_{L^{p_1}_{y_1} L^{p_2}_{y_2} L^{p_3}_{y_3}},\]
where the $L^p$ norms on the right hand side are taken with respect to the coordinates $y_i = x \cd e_i$. Sometimes we will write $L^p_e$ for $L^p_{e_1} L^p_{e_2}$, where $e = e_1 + i e_2$. We will also omit to specify all of the directions $e_i$ when they can be inferred from context.

The next lemma is due to~\cite{Falconer1980}. We will use an easy consequence: if $q$ lies in the range $(3,3/(1-s))$, then
\[\E^{q}[\n{A}_{L^\infty L^2_{Ue}}] \ll \n{A}_{q}\ll \n{A}_{W^{s,3}}.\]
\begin{lemma}
\label{lil2}
Let $f \in L^{p}(\R^3)$, where $p> 3/2$. Assume $f$ is supported in a ball $B(0,1)$. Then
\begin{align*}
\E^{p}[\n{f}_{L^\infty L^1_{Ue}}] & \ll \n{f}_{p}.
\end{align*}
\end{lemma}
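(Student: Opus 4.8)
My plan is to reduce the statement to a maximal estimate for the Radon transform in $\R^3$ and then combine an elementary Fourier argument in the range $p\ge 2$ with the geometric overlap estimate of~\cite{Falconer1980} in the critical range $p\in(3/2,2)$. Replacing $f$ by $\as f$ we may assume $f\ge 0$. Unwinding the mixed-norm convention (the direction $e_3$ carries the $L^\infty$ norm and $e_1,e_2$ the two $L^1$ norms), and writing $\omega=Ue_3$, we have
\[\n{f}_{L^\infty L^1_{Ue}} = \esssup_{t\in\R} Rf(\omega,t), \qquad Rf(\omega,t) := \int_{\{x:\, x\cd\omega=t\}} f \, dS,\]
since $\{Ue_1,Ue_2\}$ is an orthonormal basis of the plane $\{x: x\cd\omega=t\}$. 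This depends on $U$ only through $\omega=Ue_3\in S^2$, so by~\eqref{haarformula} the lemma is equivalent to the bound $\E^p[Tf(\omega)\mid S^2]\ll\n f_p$ for $p\in(3/2,\infty]$ and $f$ supported in $B$, where $Tf(\omega):=\esssup_t Rf(\omega,t)$. Note that $T$ is sublinear, and that $Tf(\omega)=\sup_{t\in\R,\, 0<\d\le 2}(2\d)^{-1}\int_{\{\as{x\cd\omega-t}<\d\}} f\,dx$ (by the Lebesgue differentiation theorem in $t$, applied in both directions), so $T$ is also a maximal operator over slabs.

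The endpoints $p=\infty$ and $p=2$ are elementary. For $p=\infty$ we have $Rf(\omega,t)\le\n f_\infty\as{B\cap\{x\cd\omega=t\}}\ll\n f_\infty$. For $p=2$, decompose $f=\sum_{\ld\ge 1}f_\ld$ dyadically in frequency. By the Fourier slice identity $\widehat{Rf_\ld(\omega,\cd)}(s)=\widehat{f_\ld}(s\omega)$, the function $Rf_\ld(\omega,\cd)$ has $t$-frequencies of size $\as s\sim\ld$, so Bernstein in the $t$ variable gives $\n{Rf_\ld(\omega,\cd)}_{L^\infty_t}\ll\ld^{1/2}\n{Rf_\ld(\omega,\cd)}_{L^2_t}$; Plancherel in $t$ together with polar coordinates on $\R^3$ yields $\int_{S^2}\n{Rf_\ld(\omega,\cd)}_{L^2_t}^2\,d\si(\omega)\sim\ld^{-2}\n{f_\ld}_2^2$. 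Summing the dyadic pieces using $\sum_{\ld\ge 1}\ld^{-1}<\infty$ and Cauchy--Schwarz gives $\E^2[Tf\mid S^2]\ll\n f_2$ (the piece $\ld\le 1$ is handled similarly, using that $f$ is bounded in $L^1$). Real interpolation of the sublinear $T$ between these two bounds covers all $p\in[2,\infty]$.

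It remains to treat $p\in(3/2,2)$, and this is where the real difficulty lies. Dualizing on $S^2$: given $g\ge 0$ with $\n g_{L^{p'}(S^2)}\le 1$, a measurable selection of slabs $S_\omega=\{\as{x\cd\omega-t_\omega}<\d_\omega\}$, $\d_\omega\le 2$, nearly realizing $Tf(\omega)$ reduces matters to the estimate $\n{G}_{L^{p'}(B)}\ll\n g_{L^{p'}(S^2)}$, uniformly over such families of slabs, where $G(x)=\int_{S^2}(2\d_\omega)^{-1}g(\omega)\mathbf 1_{S_\omega}(x)\,d\si(\omega)$. The case $p'=2$ follows from Schur's test, since $\as{S_{\omega_1}\cap S_{\omega_2}\cap B}\ll\d_1\d_2/\sin\angle(\omega_1,\omega_2)$ and the kernel $1/\sin\angle(\omega_1,\omega_2)$ is integrable over $S^2$ (the same range $p\in[2,\infty]$ also follows this way, by interpolation with the trivial bound $\n G_{L^1(B)}\ll\n g_{L^1(S^2)}$). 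The obstruction in the remaining range $p'\in(2,3)$ is the triple overlap: the naive bound $\as{S_{\omega_1}\cap S_{\omega_2}\cap S_{\omega_3}\cap B}\sim\d_1\d_2\d_3/\as{\det(\omega_1,\omega_2,\omega_3)}$ produces the kernel $1/\as{\det(\omega_1,\omega_2,\omega_3)}$, which is only logarithmically non-integrable on $S^2\times S^2$ --- precisely the reason $p=3/2$ must be excluded --- and making this quantitative requires the refinement that the triple-overlap parallelepiped fails to fit inside $B$ when $\omega_1,\omega_2,\omega_3$ are close to coplanar. This refinement is the substance of~\cite{Falconer1980}, and it is the step I expect to be the main obstacle; everything else above is routine.
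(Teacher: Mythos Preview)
Your argument is correct but follows a genuinely different route from the paper. You identify $\n{f}_{L^\infty L^1_{Ue}}$ with the plane-maximal Radon transform $Tf(Ue_3)=\esssup_t Rf(Ue_3,t)$, handle $p\ge 2$ by a direct Fourier--slice/Bernstein argument, and for $p\in(3/2,2)$ dualize and invoke Falconer's slab-overlap estimate as a black box. The paper instead gives a unified, self-contained proof valid for all $p>3/2$: since $g=\as f\ge 0$, mollifying in the $Ue$-plane via $\ti P^{Ue}$ does not decrease the $L^\infty L^1_{Ue}$ norm; the compact support of $\ti P^{Ue}g$ then allows H\"older to pass from $L^1$ to $L^p$ in the plane; a Littlewood--Paley decomposition in both $\as\xi$ and $\as{\xi\cd Ue}$ followed by (vector-valued) Bernstein in the $Ue_3$ direction reduces matters to $\sum_{\nu,\ld}\ld^{1/p}\nu^{-N}\E^p[\n{P^{Ue}_\nu P_\ld g}_p]$, where the $\nu^{-N}$ comes from the almost-orthogonality bound~\eqref{hfhm}; and this sum is controlled by the averaging estimate~\eqref{pavgdual} from Lemma~\ref{avgestimates}, with the resulting dyadic sum $\sum_\ld \ld^{3/p-2}$ converging exactly when $p>3/2$. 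Your approach makes the Radon-transform geometry explicit and is cleaner for $p\ge 2$; the paper's approach has the advantage of staying entirely within its own averaging framework (the elementary estimate~\eqref{pavgdual} replaces Falconer's geometric overlap argument) and of treating the full range $p>3/2$ by a single mechanism.
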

\begin{proof}
Let $g = \a{f}$, and let $\eta$ be a mollifier as defined above. Since $g$ is nonnegative, we have
\begin{align*}
\n{g}_{L^\infty_{x_3} L^1_{x_1,x_2}} & = \esssup_{x_3} \int g(x_1,x_2,x_3) \,dx_1\,dx_2\\
&= \esssup_{x_3}\int \int \eta((y_1-x_1) + i(y_2-x_2))\,dy_1\,dy_2 g(x_1,x_2,x_3) \,dx_1\,dx_2\\ 
&=\esssup_{x_3} \int \eta* g(y_1,y_2,x_3) \,dy_1\,dy_2
\end{align*}
Thus we can replace $g$ by $\ti P g$, since
\[\n{g}_{L^\infty_{x_3} L^1_{x_1,x_2}} \leq \n{\ti P g}_{L^\infty_{x_3} L^1_{x_1,x_2}}.\]
More generally, for $U \in O(3)$, we have
\[\n{g}_{L^\infty L^1_{Ue}} \leq \n{\ti P^{Ue} g}_{L^\infty L^1_{Ue}}.\]
Now, since $\ti P^{Ue} g$ is supported in the ball $B(0,2)$, H\"older's inequality implies that
\[\n{\ti P^{Ue} g}_{L^\infty L^1_{Ue}} \ll \n{\ti P^{Ue}g}_{L^\infty L^{p}_{Ue}}.\]
Decompose $g$ as $g = \sum_{\nu,\ld\geq 1} P^{Ue}_\nu P_\ld g$. By abuse of notation, we redefine $P_1$ and $P^{Ue}_1$ as $P_1 = P_{\leq 1}$ and $P^{Ue}_1 = P^{Ue}_{\leq 1}$.  For each of these pieces, we apply Bernstein's inequality in the $U e_3$ direction, which gives
\begin{align*}
\n{\ti P^{Ue} P^{Ue}_{\nu} P_\ld g}_{L^\infty L^{p}_{Ue}}& \ll \ld^{1/p}\n{\ti P^{Ue}P_\nu^{Ue} P_\ld g}_{p}
\end{align*}
By the almost orthogonality bound~\eqref{hfhm}, this implies that
\[\sum_{\nu,\ld\geq 1} \n{\ti P^{Ue} P^{Ue}_{\nu} P_\ld g}_{L^\infty L^{p}_{Ue}}\ll \sum_{\nu,\ld \geq 1} \ld^{1/p}\nu^{-N} \n{P^{Ue}_{\nu} P_\ld g}_{p} \]
Averaging over $O(3)$ using the averaging bound~\eqref{pavg}, we obtain
\begin{align*}
\sum_{\nu,\ld\geq 1}\ld^{1/p} \nu^{-N} \E^3[\n{\ti P^{Ue} P^{Ue}_{\nu} P_\ld g}_{L^\infty L^{p}_{Ue}}]&\ll \sum_{\nu,\ld \geq 1}\ld^{1/p} \j{\ld/\nu}^{2/p-2} \nu^{-N} \n{P_\ld g}_{p} \\
&\ll \sum_{\ld\geq 1} (\ld^{1/p-N} + \ld^{3/p-2}) \n{g}_{p}\\
&\ll \n{g}_{p}.
\end{align*}
\end{proof}

We are now ready to prove estimates in the space $X^{-1/2}_\z$ for some dangerous terms which will appear on the right hand side of the equation $L_{A,q,\z} \psi = \dotsb$ for the remainder $\psi$. The next lemma is fairly straightforward to prove if $a = 1$; in that case it follows from the fact that the $X^{-1/2}_\z$ norm is controlled, on average, by the $H^{-1}$ norm. However, when $a$ is nontrivial, we will have to work harder.
\begin{lemma}
\label{aq}
Let $q \in W^{-1,3}(\R^3)\cap W^{-1,2} (\R^3)$, and suppose that for each $(\tau, U)$ in $\R_+ \times O(3)$ we are given a function $a_{\tau,U}$, such that 
\[M=\sup_{\tau, U}{ (\n{a}_{\infty} + \tau^{-1} \n{\na a}_{\infty} + \tau^{-2} \n{\na^2 a}_\infty +  \n{\na a}_2)} < \infty.\]
Then
\begin{align*}
\label{vda}
\ti\E_K^2\n{a\cd q}_{X_{\tau U (e_1 + ie_2)}^{-1/2}} &\ll M (\log K)^{-1/3} (\n{q}_{W^{-1,2}} + \n{q}_{W^{-1,3}}) + \n{q_{>2^K}}_{W^{-1,2}}.
\end{align*}
\end{lemma}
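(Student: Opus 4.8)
The goal is to estimate $\norm{a\cdot q}_{X^{-1/2}_\zeta}$, averaged over $(\tau,U)$, where $q\in W^{-1,3}\cap W^{-1,2}$ and $a$ satisfies the stated bounds. The plan is to reduce to the case where $q$ has no low-frequency part and $a$ is frozen, then exploit the two elementary bounds on $X^{-1/2}_\zeta$: on one hand, by the dual Strichartz estimate \eqref{dualstrichartz}, $\norm{f}_{X^{-1/2}_\zeta}\ll\norm{f}_{6/5}$, which together with $q\in W^{-1,3}$ controls products $aq$ after paying a derivative; on the other hand, by the high/low modulation $L^2$ estimates \eqref{l2estimate}, \eqref{hmestimate} one has $\norm{f}_{X^{-1/2}_\zeta}\ll\tau^{-1/2}\norm{f}_2$ on low modulations and $\norm{f}_{X^{-1/2}_\zeta}\ll\norm{f}_{H^{-1}_\tau}$ on high modulations, which combined with $q\in W^{-1,2}$ and the boundedness of $a$ in $L^\infty\cap\dot W^{1,2}$ gives the $\tau$-decay. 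Interpolating between these two regimes — the $L^{6/5}$ bound being the source of the critical $W^{-1,3}$ hypothesis, the $L^2$ bound being the source of the $(\log K)^{-1/3}$ gain in $\tau$ — should produce the claimed inequality.

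First I would decompose $q=q_{\le 2^K}+q_{>2^K}$. For the high-frequency part $q_{>2^K}$, the term $\norm{q_{>2^K}}_{W^{-1,2}}$ on the right-hand side is exactly the trivial bound: $\norm{a q_{>2^K}}_{X^{-1/2}_\zeta}\ll \norm{a q_{>2^K}}_{H^{-1}_\tau}\ll \norm{a q_{>2^K}}_{H^{-1}}$, and since $a$ is bounded in $L^\infty$ with one derivative in $L^2$, multiplication by $a$ is bounded on $H^{-1}$ (with $L^2$-valued output) by duality against $H^1$; one must be slightly careful because $a$ is only in $\dot W^{1,2}\cap L^\infty$ rather than $W^{1,3}$, but on the compactly supported pieces this is enough, or one localizes with a fixed cutoff and uses \eqref{locdual}. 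So the genuine work is the piece $q_{\le 2^K}$, and in fact the piece of $q$ at each dyadic frequency scale $\lambda\le 2^{K^2}$ contributing to $X^{-1/2}_\zeta$ where $\tau\in[2^K,2^{K^2}]$; the gain will come from scales $\lambda\ll\tau$.

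The core step is: for a single frequency-localized piece $q_\lambda$ and a single $(\tau,U)$, split $a q_\lambda$ according to output modulation via the projections $Q^\zeta_\mu$ ($\mu\le\tau/8$) and $Q^\zeta_h$. On the high-modulation part use $\norm{Q_h(aq_\lambda)}_{X^{-1/2}_\zeta}\ll\norm{aq_\lambda}_{H^{-1}_\tau}\ll (\lambda+\tau)^{-1}\norm{aq_\lambda}_2\ll M\tau^{-1}\lambda\norm{q_\lambda}_{2}\cdot\lambda^{-1}$-type bounds (using Bernstein and $\norm{a}_\infty\le M$), and this is summable with room to spare. On the low-modulation part, interpolate the two available estimates: $\norm{Q_\mu f}_{X^{-1/2}_\zeta}\ll(\mu\tau)^{-1/2}\norm{Q_\mu f}_2$ and $\norm{Q_\mu f}_{X^{-1/2}_\zeta}\ll(\mu/\tau)^{1/3}\norm{f}_{6/5}$ from \eqref{dualbandstrichartz}. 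Writing $q_\lambda=\lambda^{-1}\cdot(\lambda q_\lambda)$ with $\lambda q_\lambda$ essentially $\na$ of a $W^{0,3}\cap W^{0,2}$ function, the $L^{6/5}$ bound on $aq_\lambda$ costs $\norm{a}_\infty\lambda^{-1}\norm{\lambda q_\lambda}_{6/5}$ — but $q_\lambda$ is not in $L^{6/5}$ globally, so here one uses the compact support of $a$ (hence of $aq_\lambda$) and Hölder to pass from $L^3$ to $L^{6/5}$ at the cost of the support volume, converting $\norm{q}_{W^{-1,3}}$ into the controlling norm; this is precisely why $W^{-1,3}$ (critical) appears and why no smallness or extra regularity on $q$ is needed. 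Summing the $Q_\mu$ pieces over $\mu\le\tau/8$ with the geometric factor $(\mu/\tau)^{1/3}$, then summing over $\lambda$ using the Besov embedding $L^3\subset B^0_{3,\infty}$ (absorbing a $\log$ from the $\lambda$-sum), and finally averaging over $(\tau,U)$ in $[2^K,2^{K^2}]\times O(3)$ using the frequency-convolution gain \eqref{loggain} with $\alpha=1/3$ and $p=3$ (which supplies exactly $(\log K)^{-1/3}$), yields the bound.

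The main obstacle I anticipate is bookkeeping the three-parameter sum over output modulation $\mu$, input frequency $\lambda$, and the average over $\tau$ so that the logarithmic factors balance: the $L^{6/5}$ route gives a gain $(\mu/\tau)^{1/3}$ but the naive $\lambda$-sum of $\norm{q_\lambda}_3$ diverges logarithmically, and one must be sure that the $(\log\tau)^{1/p}$ weight built into \eqref{loggain} is enough to tame the $\lambda$-sum \emph{after} — not before — averaging; the factor $(1+\log_+(\tau/\nu))^{5(1/p-1/2)}$ in \eqref{qavg} is not needed here since we are not using the Carleson–Sjölin $L^4$ average, only $L^2$ and $L^{6/5}$, but if one instead routed the low-modulation estimate through \eqref{qavg} at $p=2$ one would get the same result. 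A secondary subtlety is that $a$ is only assumed bounded in $L^\infty\cap\dot W^{1,2}$ (not $W^{1,3}$), so that whenever $a$ multiplies something and one wants an $L^{6/5}$ or $H^{-1}$ bound on the product, one must keep $a$ in $L^\infty$ and never differentiate it — all derivatives must fall on $q$ — which is compatible with the way $q\in W^{-1,3}$ is used above.
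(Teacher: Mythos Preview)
There is a genuine gap in the low-modulation estimate. Your plan for $Q_l(aq_\lambda)$ with $\lambda\ll\tau$ is to bound each $\norm{Q_\mu(aq_\lambda)}_{X^{-1/2}_\zeta}$ either by $(\mu/\tau)^{1/3}\norm{aq_\lambda}_{6/5}$ via \eqref{dualbandstrichartz} or by $(\mu\tau)^{-1/2}\norm{aq_\lambda}_2$, then sum in $\mu$ and $\lambda$ and average in $\tau$ via \eqref{loggain}. But \eqref{loggain} needs a factor $(\lambda/\tau)^\alpha$ as input, and neither route produces one. Even granting compact support of $a$ (which is not assumed), the $L^{6/5}$ bound gives $\norm{aq_\lambda}_{6/5}\ll M\norm{q_\lambda}_3$, so after the $\mu$-sum one has $\norm{Q_l(aq_\lambda)}_{X^{-1/2}_\zeta}\ll M\norm{q_\lambda}_3$ with no $\tau$-decay at all; the $\lambda$-sum is then $\norm{q}_{B^0_{3,1}}$, which is uncontrolled. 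The $L^2$ route gives at best $\norm{Q_l(aq_\lambda)}_{X^{-1/2}_\zeta}\ll M\tau^{-1/2}\norm{q_\lambda}_2$, and the $\lambda$-sum over $\lambda\ll\tau$ is $O(\tau^{1/2})$. The difficulty is already visible for $a\equiv 1$.

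The missing idea, implemented in the paper's proof, is to decompose $a$ in frequency as well and exploit the Fourier-support constraint $Q_\mu(a_\nu q_\lambda)=Q_\mu(a_\nu\, Q_{\ll\max(\mu,\nu)}q_\lambda)$: once a modulation projector sits in front of $q_\lambda$, the averaging estimate \eqref{qavg} (over $(\tau,U)$, not just $\tau$) supplies a gain of order $\max(\mu,\nu)/\lambda$, and this is what converts the raw factor $\lambda$ into the required $(\lambda/\tau)^\alpha$. So your remark that \eqref{qavg} ``is not needed here'' is exactly backwards --- it is the engine of the argument. Likewise your principle ``keep $a$ in $L^\infty$ and never differentiate it'' is incorrect: the proof uses $\norm{\nabla a}_2\le M$ to handle the pieces $a_\nu$ with $\nu\ge\lambda$ (via $\norm{a_\nu}_2\ll\nu^{-1}M$ in terms $III$ and $V$), and $\tau^{-2}\norm{\nabla^2 a}_\infty\le M$ to handle the high-high interaction $a_\lambda q_{\sim\lambda}$ with $\lambda\gg\tau$. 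Those hypotheses appear in the statement because they are genuinely needed.
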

\begin{proof}
Let $\z = \tau U(e_1+ie_2)$. In what follows we will use $\n{\cd}$ to denote the $X^{-1/2}_\z$ norm. Since we are working with homogeneous norms, it is convenient to redefine all of our dyadic projections by $P_1 = P_{\leq 1}$, $Q_1 = Q_{\leq 1}$ and so on.

At high modulation, we use the high-modulation estimate~\eqref{hmestimate}
\begin{align*}
\n{Q_h (a q)} & \ll \n{a q}_{H^{-1}_\tau}.
\end{align*}
We estimate this using the definition of $H^{-1}_\tau$. For a test function $u$, we have
\begin{align*}
\a{(a q,u)} & = \a{(q,\bar a u)} \\
&\ll \sum_{1 \leq \ld \leq \tau} \a{(q_\ld,\bar  a u)} + \a{(q_{> \tau}, \bar a u)}\\
& \ll \sum_{1 \leq \ld\leq \tau} (\ld/\tau)\n{q_\ld}_{W^{-1,2}} \n{a}_\infty \tau \n{u}_2 + \n{q_{>\tau}}_{W^{-1,2}} \n{au}_{H^1} \\
&\ll M(\sum_{1 \leq \ld\leq \tau}( \ld/\tau)\n{q_\ld}_{W^{-1,2}} + \n{q_{>\tau}}_{W^{-1,2}})\n{u}_{H^1_\tau}
\end{align*}
Thus by duality, we have
\begin{align*}
\n{aq}_{H^{-1}_\tau}& \ll M\sum_{1 \leq \ld\leq \tau} (\ld/\tau)\n{q_\ld}_{W^{-1,2}} + M\n{q_{>\tau}}_{W^{-1,2}}.
\end{align*}
Applying the frequency convolution estimate~\eqref{loggain}, we have
\begin{align*}
\ti\E^2_K[\n{aq}_{H^{-1}_\tau}] & \ll M (\log K)^{-1/2} \n{q}_{W^{-1,2}} + M \n{q_{>2^K}}_{W^{-1,2}}.
\end{align*}

We decompose the low-modulation part as 
\begin{align*}
Q_l(a \cd q ) & = LL + HH ,
\end{align*}
where the low-low part is given by
\begin{align*}
LL & = Q_l(a_{\ll \tau} q_{\ll \tau})
\end{align*}
and the high-high part is given by
\begin{align*}
HH & = \sum_{\ld \ggg \tau} Q_l(a_{\ld} q_{\sim \ld}).
\end{align*}
We further decompose the low-low part as 
\[LL = I + II + III + IV + V,\]
where
\begin{align*}
I & = \sum_{1\leq\ld \ll \tau} \sum_{\mu \geq A(\ld,\tau)} Q_\mu (a_{\ll \tau}  q_{\ld}) \\
II & = \sum_{1\leq\ld \ll \tau} \sum_{\mu < A(\ld,\tau)} Q_\mu (a_{\leq \mu} q_{\ld}) \\
III & = \sum_{1\leq\ld \ll \tau} \sum_{\mu < A(\ld,\tau)} Q_\mu(a_{\geq \ld} q_\ld) \\
IV & = \sum_{1\leq\ld \ll \tau} \sum_{\mu < A(\ld,\tau)} Q_\mu(a_{(\mu,B(\mu,\ld,\tau))} q_\ld) \\
V & = \sum_{1\leq\ld \ll \tau}  \sum_{\mu < A(\ld,\tau)} Q_\mu(a_{[B(\mu,\ld,\tau),\ld)} q_\ld) .
\end{align*}
The constant $c$ is chosen so small that $Q_{\leq \tau/8} (a_\nu q_\ld) = 0$ when $\ld \ggg \tau$ and $\nu < c\ld$. The cutoffs $A(\ld,\tau)$ and $B(\mu,\ld,\tau)$ will be chosen later.

We estimate $\n{I}_{X^{-1/2}_\z}$ by the $L^2$ estimate~\eqref{l2estimate}.
\begin{align*}
\n{I} & \ll \sum_{\mu \geq A(\ld,\tau)} (\mu \tau)^{-1/2} \ld \n{a}_\infty \n{q_{\ld}}_{W^{-1,2}}\\
& \ll \n{a}_\infty \sum_{\ld\ll \tau} (\ld^2/\tau)^{1/2} A(\ld,\tau)^{-1/2} \n{q_\ld}_{W^{-1,2}}.
\end{align*}
Taking $A(\ld,\tau) = \ld^{2-2\e} \tau^{-1+2\e}$, we have
\begin{align*}
\n{I}&\ll \n{a}_\infty \sum_{\ld \ll \tau} (\ld/\tau)^{\e} \n{q_\ld}_2\\
\ti\E^2_K[\n{I}] & \ll (\log K)^{-1/2} \n{a}_\infty \n{q}_{W^{-1,2}}.
\end{align*}
For $\n{II}$ we note that multiplication by $a_{\ll \mu}$ shifts the Fourier support by at most $\mu$. Thus we have $Q_\mu(a_{\leq \mu}q_{>\mu}) = Q_\mu(a_{\leq \mu} Q_{\ll \mu}q_{>\mu})$. By the $L^2$ estimate~\eqref{l2estimate} and the averaging estimate~\eqref{qavg}, we have
\begin{align*}
\E^2_{\tau_*}[\n{II}] &\ll  \sum_{\mu \ll A(\ld,\tau_*)}(\mu \tau_*)^{-1/2} \n{a}_{\infty} \E^2_{\tau_*}[\n{ Q_{\ll \mu} q_{\ld}}_{2}] \\
&\ll M\sum_{\ld \ll \tau_*} \sum_{\mu \ll A(\ld,\tau_*)} (\mu/\tau_*)^{1/2} \n{q_\ld}_{W^{-1,2}}\\
&\ll M \sum_{\ld \ll \tau_*} (\ld/\tau_*)^{1-\e} \n{q_\ld}_{W^{-1,2}}\\
\ti\E^2_K[\n{II}] &\ll M (\log K)^{-1/2} \n{q}_{W^{-1,2}}.
\end{align*}
For $\n{III}$ we use the Strichartz estimate~\eqref{dualbandstrichartz}:
\begin{align*}
\n{III} & \ll \sum_{\ld \ll \tau}\sum_{\mu < A(\ld,\tau)} \sum_{\nu \geq \ld}(\mu/\tau)^{1/3}\ns{ a_{\nu} q_\ld}_{6/5} \\
&\ll \sum_{\ld \ll \tau}\sum_{\mu<A(\ld,\tau)}\sum_{\nu \geq \ld}(\mu/\tau)^{1/3} (\ld/\nu)  \n{\na a_\nu}_2 \n{q_\ld}_{W^{-1,3}}\\
&\ll \sum_{\ld \ll \tau}  (\ld/\tau)^{(2-2\e)/3} \n{\na a}_2 \n{q_\ld}_{W^{-1,3}} \\
\ti\E^3_K[\n{III}]&\ll M (\log K)^{-1/3} \n{q}_{W^{-1,3}}.
\end{align*}
For the terms in $\n{IV_\mu}$ we can use the identity $Q_\mu(a_{(\mu,B)} q_\ld) = Q_\mu(a_{(\mu,B)} Q_{\ll B} q_\ld)$. Thus by the $L^2$ estimate~\eqref{l2estimate} and the averaging estimate~\eqref{qavg} we have, with $B = \min{\{\ld, \mu^{1/2}\tau^{1/2-2\e} \ld^{2\e}\}}$,
\begin{align*}
\n{Q_\mu(a_{(\mu,B)}  q_\ld)} & \ll (\mu \tau)^{-1/2}\n{a}_\infty \n{Q_{\ll B} q_\ld}_2\\
\E^2_{\tau_*}[\n{Q_\mu(a_{(\mu,B)}  q_\ld)}] & \ll M(\ld/\tau_*)^{2\e} \n{q_\ld}_{W^{-1,2}}
\end{align*}
Summing over $\mu$, we obtain
\begin{align*}
\E^2_{\tau_*}[\n{IV}] & \ll M(\log \tau_*)^{1/2} \sum_{\ld \ll \tau_*}(\ld/\tau_*)^{2\e} \n{q_\ld}_{W^{-1,2}}\\
\ti\E^2_K[\n{IV}]&\ll M (\log K)^{-1/2}\n{q}_{W^{-1,2}}.
\end{align*}

For $\n{V_\mu}$, we use the identity $Q_\mu(a_\nu q_\ld) = Q_\mu(a_\nu Q_{\ll \nu} q_\ld)$. Using the Strichartz estimate~\eqref{dualbandstrichartz} and the averaging estimate~\eqref{qavg}, we obtain
\begin{align*}
\n{Q_\mu(a_\nu  q_\ld)} &\ll (\mu/\tau)^{1/3} \n{a_\nu Q_{\ll \nu} q_\ld}_{6/5} \\
&\ll (\mu/\tau)^{1/3} \n{a_\nu}_2 \n{Q_{\ll \nu} q_\ld}_3\\
&\ll (\mu/\tau)^{1/3} \nu^{-1} \n{\na a_\nu}_2 \n{Q_{\ll \nu} q_\ld}_3\\
\E^3_{\tau_*}[\n{Q_\mu(a_\nu  q_\ld)}] & \ll (\mu/\tau_*)^{1/3} (\ld/\nu)^{1/2} (\tau_*/\nu)^{2\e} M \n{q_\ld}_{W^{-1,3}}
\end{align*}
Summing over $\nu$, we have
\begin{align*}
\E^3_{\tau_*}[\n{Q_\mu(a_{[B,\ld)}  q_\ld)}] & \ll (\mu/\tau_*)^{1/3} (\ld/B)^{1/2} (\tau_*/B)^{2\e}  M \n{q_\ld}_{W^{-1,3}}\\
&\ll \mu^{1/12-\e} \tau_*^{-7/12 + 2\e + 4\e^2} \ld^{1/2 - \e - 4 \e^2}  M \n{q_\ld}_{W^{-1,3}}\\
&\ll (\mu/\ld)^{1/12-\e} (\ld/\tau_*)^{7/12 - 2 \e - 4\e^2} M \n{q_\ld}_{W^{-1,3}}
\end{align*}
Summing over $\mu$ and applying the frequency convolution estimate~\eqref{loggain}, this gives
\begin{align*}
\E^3_{\tau_*}[\n{V}] & \ll \sum_{\ld \ll \tau_*} (\ld/\tau_*)^{\al}  M \n{q_\ld}_{W^{-1,3}} \\
\ti \E^3_K[\n{V}]& \ll  M(\log K)^{-1/3} \n{q}_{W^{-1,3}}.
\end{align*}

Finally, we estimate the high-high terms. When the modulation is sufficiently small, we use the Strichartz estimate~\eqref{dualbandstrichartz}
\begin{equation}
\label{smallmod}
\begin{aligned}
\n{Q_{\leq C} (a_\ld q_{\sim \ld})} &\ll  (C /\tau)^{1/3} \ld \n{a_\ld}_2 \n{q_{\sim \ld}}_{W^{-1,3}}\\
&\ll (C/\tau)^{1/3} M \n{q}_{W^{-1,3}}
\end{aligned}
\end{equation}
When the modulation is large, we use the $L^2$ estimate~\eqref{l2estimate} and then estimate $\n{a}_6$ by interpolation.
\begin{equation}
\label{largemod}
\begin{aligned}
\n{Q_{> C} (a_\ld q_{\sim \ld})} & \ll (C \tau)^{-1/2}\ld  \n{a_\ld}_6 \n{q_{\sim \ld}}_{W^{-1,3}}\\
&\ll (C \tau)^{-1/2} \ld \n{a_\ld}_\infty^{2/3} \n{a_\ld}_2^{1/3} \n{q_{\sim \ld}}_{W^{-1,3}}\\
&\ll (C \tau)^{-1/2} \ld^{2/3} (\tau/\ld)^{4/3}  M\n{q}_{W^{-1,3}}
\end{aligned}
\end{equation}
Here we use that $\n{\na^2 a}_\infty \ll \tau^2 M$. Let $C = \tau \ld^{-\e}$. Summing the inequalities~\eqref{smallmod} and~\eqref{largemod} over $\ld \ggg \tau$, we obtain
\begin{align*}
\n{HH} & \ll  \sum_{\ld \ggg \tau} (\ld^{-\e/3} + \tau^{1/3} \ld^{-2/3 + \e})M\n{q}_{W^{-1,3}}\\
\ti\E_K[\n{HH}]&\ll 2^{-\e K/3} M \n{q}_{W^{-1,3}}.
\end{align*}

\end{proof}
In the next lemma, we make use of the relationship between the operator $\pb_e$ and the operator $\lp_\z$.
\begin{lemma}
\label{lpchia}
Fix $s>1$. Let $B = B(0,1)$. Let $A$ be a smooth function supported in $\ff 1 2 B$, and let $\chi$ be a cutoff supported in $B$ such that $\chi = 1$ on $\f1 2 B$. Let $a = \exp(\pb_e^{-1} A)$. Then
\begin{align*}
\n{\lp (\chi a)}_{X^{-1/2}_\z} & \ll_{s} (1+\n{\na \pb_e^{-1} A}_{L^2(B)} + e^{\n{\pb_e^{-1} A}_\infty} + \n{A}_{L^\infty L^2_e} \\
&\qq+ \n{\j{\na_1}^{-1/2+s} \j{\na_2}^{-1/2+s}\na A}_2 + \n{\j{\na_{1,2}}^{-1+s} \na A}_{2} )^4
\end{align*}
\end{lemma}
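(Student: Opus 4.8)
\noindent\emph{Proof sketch.} The plan is to reduce $\lp(\chi a)$ algebraically to a sum of ``zeroth order'' terms, which are controlled trivially, plus three ``divergence form'' terms, which are the heart of the matter.

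By equivariance under the diagonal $O(3)$-action on the frame we may take $e_1,e_2$ to be the first two coordinate vectors and $e_3=e_1\times e_2$, so that $\pb_e=\pd_1+i\pd_2$. Writing $e=e_1+ie_2$ and $\bar e=e_1-ie_2$, the conjugate operator $\bar e\cd\na=\pd_1-i\pd_2$ satisfies $\pb_e(\bar e\cd\na)=\pd_1^2+\pd_2^2$, so $\lp=\pb_e(\bar e\cd\na)+\pd_3^2$; and since $\z=\tau e$ obeys $\z^2=0$ we have $\lp_\z=\lp+2\tau\pb_e$. Set $\phi=\pb_e^{-1}A$, so that $\pb_e\phi=A$, $a=e^{\phi}$ and $\na a=a\na\phi$. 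Starting from $\lp(\chi a)=\na\cd(\chi a\,\na\phi)+\na\cd((\na\chi)a)$, splitting the divergence coordinatewise and using the two-dimensional identity $\pd_1(u\,\pd_1\phi)+\pd_2(u\,\pd_2\phi)=\tfrac12(\bar e\cd\na)(u\,\pb_e\phi)+\tfrac12\pb_e(u\,(\bar e\cd\na)\phi)$ with $u=\chi a$ together with $\pb_e\phi=A$, one is led to the decomposition
\begin{equation*}
\lp(\chi a)=\tfrac12(\bar e\cd\na)(\chi A a)+\tfrac12\pb_e\bigl(\chi a\,(\bar e\cd\na)\phi\bigr)+\pd_3(\chi a\,\pd_3\phi)+(\lp\chi)a+(\na\chi)\cd a\na\phi .
\end{equation*}
The last two summands are products of a Schwartz function, the bounded function $a$ (with $\n{a}_\infty\le e^{\n{\pb_e^{-1}A}_\infty}$), and one of the factors $1$ or $\na\phi=\na\pb_e^{-1}A$; since they are supported in $B$, their $L^{6/5}$ norm---hence, by~\eqref{dualstrichartz}, their $X^{-1/2}_\z$ norm---is $\ll e^{\n{\pb_e^{-1}A}_\infty}(1+\n{\na\pb_e^{-1}A}_{L^2(B)})$, which is within the claimed bound.

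The remaining three terms have the common form $\mathcal D(\chi a\,g)$, where $\mathcal D\in\{\bar e\cd\na,\ \pb_e,\ \pd_3\}$ has symbol $\ell(\xi)$ with $\a{\ell(\xi)}\le\a{\xi}$, and $g\in\{A,\ (\bar e\cd\na)\phi,\ \pd_3\phi\}$; the product $\chi a\,g$ is supported in $B$, and since $(\bar e\cd\na)\phi=(\bar e\cd\na)\pb_e^{-1}A$ is a Beurling-type transform of $A$ (a unit-modulus multiplier in the $e$-plane, acting fibrewise in the $e_3$ variable) one has $\n{(\bar e\cd\na)\phi}_{L^2(B)}\ll\n{A}_{L^\infty L^2_e}$, while $\n{\pd_3\phi}_{L^2(B)}\le\n{\na\pb_e^{-1}A}_{L^2(B)}$. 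I would dualize, reducing matters to bounding $\a{\j{\chi a\,g,\mathcal D^{*}h}}$ for $\n{h}_{X^{1/2}_\z}\le1$, and decompose $h$ dyadically in frequency $\a{\xi}\sim\rho$ and modulation $\mu=d(\xi,\Si_\z)$. On the bulk, where $\a{\ell(\xi)}\ll(\a{p_\z(\xi)}+\tau)^{1/2}$---by~\eqref{pbehavior} this holds at all high modulations and, at low modulation, whenever $\rho^2\ll\tau\mu$---the multiplier $\mathcal D^{*}$ is bounded from $X^{1/2}_\z$ to $L^2$, so this part contributes $\ll\n{\chi a\,g}_2\ll e^{\n{\pb_e^{-1}A}_\infty}\bigl(\n{A}_{L^\infty L^2_e}+\n{\na\pb_e^{-1}A}_{L^2(B)}\bigr)$. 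The leftover frequencies form a neighbourhood of $\Si_\z$ with $\rho\gg\tau^{1/2}$ and $\mu\ll\rho^2/\tau$, on which the naive pairing loses a factor $\rho(\tau\mu)^{-1/2}$. To recover this I would exploit the curvature of $\Si_\z$: decomposing $\Si_\z$ into caps on which the component entering $\ell$ (namely $e_3\cd\xi$ for $\mathcal D=\pd_3$, and $\bar e\cd\xi$ for the other two) varies by $O(\mu)$, each resulting piece of $\mathcal D^{*}Q_\mu h$ has frequency $\sim\rho$ with $e_3$-frequency support of width $\ll\mu$, so by Bernstein in the $e_3$ direction it sits favourably in the mixed norm $L^1_{e_3}L^2_e$; pairing against $\chi a\,g$ by H\"older in $L^\infty_{e_3}L^2_e\times L^1_{e_3}L^2_e$, summing the caps with $\ell^2$-orthogonality, and then summing the dyadic parameters, one finds that the loss is merely logarithmic and is absorbed by the extra regularity measured by $\n{\j{\na_1}^{-1/2+s}\j{\na_2}^{-1/2+s}\na A}_2$ and $\n{\j{\na_{1,2}}^{-1+s}\na A}_2$---these norms enter precisely because on this region $(\bar e\cd\na)\phi$ and $\pd_3\phi$ must be re-expressed through $A$, at the cost of derivatives transverse to $\Si_\z$. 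Collecting the contributions, each term is bounded by a product of at most four of the quantities on the right-hand side (one or two factors of $\n{a}_\infty=e^{\n{\pb_e^{-1}A}_\infty}$ together with the relevant norms of $A$), which yields the stated estimate.

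The routine parts are the algebraic reduction and the bulk estimates. The genuine obstacle is the last region---the part of a neighbourhood of $\Si_\z$ where $\a{\xi}\gg\tau^{1/2}$---on which the first-order operators $\bar e\cd\na$, $\pb_e$, $\pd_3$ are \emph{not} dominated by $(\a{p_\z}+\tau)^{1/2}$, so the naive argument loses a power of $\tau$; recovering it forces the cap decomposition of $\Si_\z$, the mixed-norm Bernstein gain, and a careful bookkeeping of the resulting dyadic sums, and it is the logarithmic divergence of those sums that the hypothesis $s>1$ is present to absorb.
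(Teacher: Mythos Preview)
Your algebraic decomposition of $\Delta(\chi a)$ and the treatment of the two commutator terms and of the ``bulk'' region are fine. The gap is in the leftover low-modulation region. Bernstein's inequality goes only from $L^p$ to $L^q$ with $q\ge p$: a localization of width $\mu$ in the $e_3$-frequency places a cap piece of $\mathcal D^*Q_\mu h$ into $L^\infty_{e_3}L^2_e$ (at cost $\mu^{1/2}$), not into $L^1_{e_3}L^2_e$. If one runs the cap argument with this correction---pairing $\chi a\,g\in L^1_{e_3}L^2_e$ (using its compact support) against each cap piece in $L^\infty_{e_3}L^2_e$, then applying $\ell^2$-orthogonality over the $\sim\tau/\mu$ caps---one obtains $\mu^{1/2}\cdot\rho\cdot(\tau/\mu)^{1/2}\cdot(\mu\tau)^{-1/2}=\rho\,\mu^{-1/2}$ per dyadic block, which at $\mu\sim 1$ and $\rho\sim\tau$ is a full power of $\tau$, not a logarithm. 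Your sketch also does not indicate how the two weighted $L^2$ norms of $\nabla A$ in the statement would actually arise from this mechanism.

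The paper's argument avoids caps and works in the $e$-plane rather than in $e_3$. It decomposes in modulation $\mu$ and in the $e$-frequency $\lambda\sim|\xi\cdot e|$, and uses the symbol identity $|\xi|^2\ll\tau|\xi\cdot e|+|p_\zeta(\xi)|$ to see that the region $\lambda\le\mu$ is already controlled by $\|a\|_{H^1(B)}$. For $\lambda>\mu$ it trades one full gradient for $\pb_e$, writes $\nabla\pb_e(\chi a)=\chi\nabla(Aa)+[\nabla\pb_e,\chi]a$, and splits $\nabla(Aa)=a\,\nabla A+A\,\nabla a$. The term $A\,\nabla a$ is handled by Bernstein in the $e_1,e_2$ directions followed by H\"older (this is where $\|A\|_{L^\infty L^2_e}$ appears), and the term $a\,\nabla A$ by a paraproduct decomposition of $(\chi a)\cdot\nabla A$ in the $e$-frequency together with the constraint $|\xi_1|\ll\mu$ coming from $Q_\mu$ (this is where the two weighted norms of $\nabla A$ appear). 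The point is that all the Bernstein gains are taken in the $e_1,e_2$ directions, where $\Sigma_\zeta\subset\{\xi_1=0\}$ is thin; the $e_3$ direction is tangent to $\Sigma_\zeta$ and offers no gain of this kind.
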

\begin{proof}
As in the previous lemma, we redefine $P_1$ as $P_{\leq 1}$ and so on. 

At high modulation we use the high-modulation estimate~\eqref{hmestimate}.
\begin{align*}
\n{Q_h \lp(\chi a)}_{X^{-1/2}} & \ll \n{\chi a}_{H^1}\\
&\ll \n{a}_{H^1(B)}.
\end{align*}
It remains to consider the low-modulation part of $\chi a$. By the $L^2$ estimate~\eqref{l2estimate},
\[\n{Q_l \lp (\chi a)}_{X^{-1/2}}^2 \ll \sum_{1 \leq \mu \ll \tau} \sum_{ \ld \ll \tau} (\mu \tau)^{-1} \n{Q_\mu P^e_\ld \lp (\chi a)}_2^2.\]
Now we observe that
\begin{align*}
\a{\xi}^2 &= 2 i \z \cd \xi + p_\z(\xi) \\
&\ll \tau \a{\xi \cd e} + \tau d(\xi,\Si).
\end{align*}
Thus, when $\ld \leq \mu$, the symbol of $Q_\mu P_\ld^e \na$ is bounded by $(\mu\tau)^{1/2}$. It follows that 
\begin{align*}
\sum_{1 \leq \mu \ll \tau} \sum_{\ld \leq \mu} (\mu \tau)^{-1} \n{Q_\mu P^e_\ld \lp (\chi a)}_2^2 & \ll \sum_{1 \leq \mu \ll \tau} \sum_{\ld \leq \mu} \n{Q_\mu P_\ld^e \na(\chi a)}_2^2 \\
&\ll \n{\na(\chi a)}_2^2\\
&\ll \n{a}_{H^1(B)}^2.
\end{align*}
It remains to control the terms where $\ld > \mu$. In this case the symbol of $Q_\mu P_\ld^e\na$ is bounded by $(\ld\tau)^{1/2}$, so we have
\begin{align}
\label{lgtmu}
\sum_{1 \leq \mu < \ld \ll \tau}(\mu \tau)^{-1/2} \n{Q_\mu P_\ld^e \lp (\chi a)}_2 & \ll \sum_{1 \leq \mu \leq \tau/8} \sum_{\mu < \ld}\mu^{-1/2} \ld^{-1/2}\n{ Q_\mu P^e_\ld \na \pb_e (\chi a)}_2
\end{align}
Since the commutator $[\na \pb_e,\chi]$ satisfies the bound
\[\n{[\na\pb_e,\chi]a}_2 \ll \n{a}_{H^1(B)},\]
we may replace $\na \pb_e(\chi a)$ with $\chi \na\pb_e a$ on the right-hand side of~\eqref{lgtmu}. Now we use the definition of $a$ to write
\[\na \pb_e a = \na (A a) = \na A a + A \na a.\]
For $A \na a$, we apply Bernstein's inquality in the $e_1$ and $e_2$ directions and use the identity $\na a = \pb_e^{-1} A \cd a$.
\begin{align*}
\mu^{-1/2} \ld^{-1/2} \n{Q_\mu P^e_\ld (\chi A \na a)}_2 & \ll (\mu \ld)^{-s/2} \n{ Q_\mu P^e_\ld(\chi\cd A\na a)}_{L^{2} L^{1/(1-s/2)}_{e}} \\
&\ll (\mu\ld)^{-s/2} \n{A}_{L^\infty L^2_{e}}\n{\chi\na a}_{L^2 L^{2/(1-s)}_e}\\
&\ll (\mu \ld)^{-s/2} \n{A}_{L^\infty L^2_e}\n{a}_\infty \n{\chi\na \pb_e^{-1} A}_{L^2 L^{2/(1-s)}_e}
\end{align*}
Since $s>0$, we can sum the right-hand side over $\mu$ and $\ld$ as long as the last factor is bounded. To check this, we use the localization estimate~\eqref{dbploc} and Sobolev embedding.
\begin{align*}
\n{\chi\na \pb_e^{-1} A}_{L^2 L^{2/(1-s)}_e} & \ll \n{\na P_{\leq 1}^{e} A}_{L^2 L^{2/(1-s)}_e} + \n{\na\pb^{-1}_e P_{>1}^e A}_{L^2 L^{2/(1-s)}_e} \\
&\ll \n{\na P^e_{\leq 1} A}_2 + \n{\na \j {\na_{1,2}}^s \pb_e^{-1} P^e_{>1} A}_{2} \\
&\ll \n{\j{\na_{1,2}}^{s-1} \na A}_2.
\end{align*}

For $(\chi a)\na A $, use the LP dichotomy formula:
\[P^e_{\ld} ((\chi a )\na A) = \sum_{\k \ll \ld}P^e_\ld( P^e_\k(\chi a)\cd P^e_{\ll \ld} \na A) + P^e_{\ld}(\sum_{\eta \gg \ld} P^e_\eta (\chi a) \cd P^e_{\sim \eta} \na A).\]
For the low-low terms, we have two cases. When $\k \leq \mu$, we use the identity $Q_\mu (P_{\leq \mu}^e f\cd g) = Q_\mu (P_{\leq \mu}^e f \cd  P^{e_1}_{\ll \mu} g)$. Thus we have
\begin{align*}
\mu^{-1/2} \ld^{-1/2} \n{Q_\mu P^e_\ld (P_{\leq \mu}^e (\chi a)\cd P^e_{\ll \ld} P^{e_1}_{\ll \mu} \na A)}_2 & \ll \n{a}_\infty (\mu \ld)^{-1/2} \n{P^{e_2}_{\ll \ld} P^{e_1}_{\ll \mu} \na A}_2
\end{align*}
Summing over $\mu$ and $\ld$, we obtain
\begin{align*}
\sum_{1\leq \mu,\ld \leq \tau/8}\dotsb & \ll  \n{a}_\infty \n{\j{\na_{1}}^{-1/2+s} \j{\na_{2}}^{-1/2+s} \na A}_2.
\end{align*}
When $\k > \mu$, we have instead $Q_\mu(P_\k^e f \cd g) = Q_\mu(P_\k^e f \cd P^{e_1}_{\ll \k} g)$. Then
\begin{align*}
\mu^{-1/2} \ld^{-1/2} \n{Q_\mu(P_{\k}^e (\chi a) P^e_{\ll \ld} P^{e_1}_{\ll \k} \na A)}_2 &\ll \ld^{-1/2} \n{P_\k^e (\chi a) P^e_{\ll \ld} P^{e_1}_{\ll \k} \na A)}_{L^2_{e_3,e_2}L^1_{e_1}} \\
&\ll \ld^{-1/2} \n{P_\k^e (\chi a)}_{L^\infty_{e_3} L^\infty_{e_2} L^2_{e_1}} \n{P^{e_2}_{\ll \ld} P^{e_1}_{\ll \k} \na A}_{L^2}\\
&\ll \ld^{-1/2}\k^{-1/2} \n{P_\k^e \pb_e (\chi a)}_{L^\infty L^2_{e}} \n{P^{e_2}_{\ll \ld} P^{e_1}_{\ll \k} \na A}_{L^2}.
\end{align*}
Summing over $\k$, $\mu$, and $\ld$, we obtain 
\[\sum_{1 \leq \mu< \k\leq \ld \leq \tau/8} \dotsb \ll  \n{a}_\infty(1 + \n{A}_{L^\infty L^2_{e}}) \n{\j{\na_1}^{-1/2+s} \j{\na_2}^{-1/2+s} \na A}_{L^2}.\]
For the high-high terms, we use Bernstein's inequality and then transfer the $\pb^{-1}_e$ from $a$ to $A$:
\begin{align*}
\mu^{-1/2} \ld^{-1/2} \n{Q_\mu P_\ld^e(P_\eta^e (\chi a) \cd P^e_{\sim \eta} \na A)}_{2} & \ll  \n{P_\eta^e(\chi a) \cd P^e_{\sim \eta} \na A}_{L^2 L^{1}_{e}}\\
&\ll \n{P_\eta^e(\chi a)}_{L^\infty L^2_e} \n{P^e_{\sim \eta} \na A}_{2} \\
&\ll \eta^{-1}\n{P_\eta^e\pb (\chi a)}_{L^\infty L^2_e} \n{P^e_{\sim \eta} \na A}_{2}\\
&\ll \eta^{-s}\n{a}_\infty (1+\n{A}_{L^\infty L^2_{e}}) \n{\j{\na_{1,2}}^{-1+s} \na A}_{L^2}.
\end{align*}
The sum of the right hand side over $\eta \geq \ld \geq \mu\geq 1$ is bounded, and the proof is complete.
\end{proof}

\section{Solvability of $L_{A,q,\z}$}
\label{solvabilityoflaqz}
Now we show that on average, the terms in $L_{A,q,\z} + \lp_\z$ are all perturbative.
\begin{lemma}
\label{solvability}
Let $e_1$ be a fixed unit vector in $\R^3$, and let $\eta$ be a vector in $\R^3$ such that $\a{\eta}\leq 1$. Define the operator norm $\tn{\cd}_{\tau,U }$ by $\tn{T} = \n{\cd}_{X^{1/2}_{\z(\tau,U)} \to X_{\z(\tau,U)}^{-1/2}}$, where $\z(\tau,U) = \tau U(e_1 + i \eta)$. Suppose $A \in L^3(\R^3)$. For every dyadic integer $\ld$ such that $1\leq \ld \leq 100\tau$ we have
\begin{equation}
\label{agradestimate}
\E^3[\tau \tn{A_\ld}_{\tau,U }+ \tn{A_\ld\cd\na}_{\tau,U}] \ll \min\{\j{\log_+ \ld}, \j {\log_+ \tau/\ld}\}^{1/3} \n{A_\ld}_{L^3}
\end{equation}
On the other hand, we have the high-frequency estimate
\begin{equation}
\label{ahf}
\tn{\na A_{> 100\tau}} + \tn{A_{> 100\tau} \cd \na} \ll \n{A_{>100 \tau}}_{L^3}.
\end{equation}
Finally, for $q \in W^{-1,3}$ we have
\begin{equation}
\label{potentialestimate}
\ti\E^3_K[\tn{q}_{\tau,U}] = (\log K)^{-1/3} \n{q}_{W^{-1,3}} + \n{q_{> 100\tau}}_{W^{-1,3}}.
\end{equation}
\end{lemma}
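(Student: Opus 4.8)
All three bounds assert that a zeroth‑ or first‑order operator is bounded from $X^{1/2}_\z$ to $X^{-1/2}_\z$, and I would prove them by the scheme already used for Lemma~\ref{aq}: dualize to a bilinear form, decompose both arguments and the output dyadically in frequency and in the modulation $d(\cdot,\Si_\z)$, estimate each piece with the low–modulation $L^2$ bound~\eqref{l2estimate}, the high–modulation identity~\eqref{hmestimate}, and the (dual) band–Strichartz estimate~\eqref{dualbandstrichartz}, and finally average over $U$ — and, for~\eqref{potentialestimate}, also over $\tau$ — using Lemma~\ref{avgestimates} and the frequency–convolution bound~\eqref{loggain}. Writing $\tn{T}_{\tau,U}=\sup\{\abs{\jap{Tu,v}}:\norm{u}_{X^{1/2}_\z}=\norm{v}_{X^{1/2}_\z}=1\}$, I would first dispose of the high–frequency estimate~\eqref{ahf} and of the $\norm{q_{>100\tau}}_{W^{-1,3}}$ term in~\eqref{potentialestimate}: once the coefficient has frequency $\gg\tau$, every dyadic piece of the output of comparable frequency is at high modulation, so by~\eqref{hmestimate} the norms $X^{\pm 1/2}_\z$ collapse to the semiclassical Sobolev norms $H^{\pm 1/2}_\tau$, and the bound follows from Hölder's inequality, the Sobolev embedding $H^{1/2}(\R^3)\into L^3$, and a Littlewood–Paley trichotomy, with no averaging. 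The content is therefore the estimate for a single coefficient shell $A_\ld$ (respectively $q_\ld$) with $1\le\ld\le 100\tau$.

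For such a shell I would expand $u=\sum_\mu Q_\mu u$, $v=\sum_{\mu'}Q_{\mu'}v$ and break the output into modulation shells $Q_\eta(Tu)$. The geometric point making the averaging effective is that $Q_\mu u$ and $Q_{\mu'}v$ are frequency–localized to $O(\mu)$– and $O(\mu')$–neighborhoods of $\Si_\z$, which lies in the plane $e_1^\perp$; hence the product $\bar u_\mu v_{\mu'}$ is frequency–supported in the slab $\{\abs{\xi\cd e_1}\ll\mu+\mu'\}$, so only $P^{e_1}_{\ll\mu+\mu'}A_\ld$ ever pairs against it. Three regimes arise, as in Lemma~\ref{aq}: (i) output at high modulation, handled by~\eqref{hmestimate}; (ii) output at low modulation $\eta\gg\ld$, handled by~\eqref{l2estimate}, whose gain $(\eta\tau)^{-1/2}$ on the $X^{-1/2}_\z$ side, together with Bernstein in the characteristic directions, supplies the powers of $\ld/\tau$ needed for summability; and (iii) output at low modulation $\eta\ll\ld$, handled by the curved estimate~\eqref{dualbandstrichartz}, whose factor $(\eta/\tau)^{1/3}$ makes the $\eta$–sum geometric. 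Regime~(iii) is where ``the frequency of $A$ is low enough that the curvature of $\Si_\z$ plays no role'': on that scale the neighborhood of $\Si_\z$ is coarser than the frequency scale of $A_\ld$, and a flat estimate suffices. In every regime the operator norm, for fixed $U$, is dominated by a sum over the modulation scales of deterministic weights $\mu^{a}\ld^{b}\tau^{c}$ times $\norm{P^{Ue_1}_{\ll\mu}A_\ld}_{L^r}$ (or its curved analogue $\norm{Q^{\z}_{\ll\mu}A_\ld}_{L^r}$).

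One then averages this single quantity in $U$, which is precisely what the averaging estimates are built for: \eqref{pavg}/\eqref{pavgdual} give $\E^r[\norm{P^{Ue_1}_{\le\mu}A_\ld}_{L^r}]\ll\jap{\ld/\mu}^{-1/r}\norm{A_\ld}_{L^r}$ — $\hat A_\ld$ cannot concentrate near too many hyperplanes — while~\eqref{qavg} trades a stronger, curved gain for the Carleson–Sjölin logarithm $(1+\log_+(\tau/\mu))^{5(1/p-1/2)}$. Summing the geometric series in $\mu,\mu',\eta$, the gains defeat the polynomial losses and leave the advertised loss $\min\{\jap{\log_+\ld},\jap{\log_+\tau/\ld}\}^{1/3}$: the $\jap{\log_+\tau/\ld}$ branch is the surviving Carleson–Sjölin logarithm, the $\jap{\log_+\ld}$ branch is the count of the $O(\log\ld)$ modulation scales $1\le\mu\ll\ld$ treated by the weaker flat bound, and one keeps whichever of the two estimates is cheaper; this gives~\eqref{agradestimate}. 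For~\eqref{potentialestimate} the same analysis applied to multiplication by $q_\ld$ comes out with a genuine power gain $(\ld/\tau)^{\alpha}$, $\alpha>0$, because $\norm{q_\ld}_{W^{-1,3}}\sim\ld^{-1}\norm{q_\ld}_{L^3}$ carries a spare factor $\ld^{-1}$, so that one may push all modulations below a threshold $\sim\ld^{2-2\alpha}\tau^{-1+2\alpha}$ exactly as in the bound on term~$I$ in Lemma~\ref{aq}; summing over $1\le\ld\le100\tau$ and averaging over $\tau\in[2^K,2^{K^2}]$ via~\eqref{loggain} and the normalization~\eqref{logk} then replaces the $\ld$–sum by $(\log K)^{-1/3}\norm{q}_{W^{-1,3}}$, which with the high–frequency tail yields~\eqref{potentialestimate}.

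I expect the first–order operator $A_\ld\cd\na$ (and its companion $\tau A_\ld$) to be the main obstacle. For a $W^{-1,3}$ potential the naive bound from~\eqref{strichartz} costs only the averaging logarithm, but for $A_\ld\cd\na$ the extra derivative — which on $\Si_\z$ has magnitude up to $\sim\tau$ — loses a full power of $\tau^{1/2}$ in the worst case, so that the straightforward estimate reads $\tau\tn{A_\ld}_{\tau,U}\ll\tau^{1/2}\norm{A_\ld}_{L^3}$, far from the claimed logarithmic bound; this is the analytic face of the failure of the gradient Carleman estimate of~\cite{BarceloKenigRuizSogge1988}. Recovering the missing $\tau^{1/2}$ forces one to exploit the concentration of $X^{1/2}_\z$ on $\Si_\z$ (through the slab localization above) and the off–$\Si_\z$ ellipticity simultaneously, and the delicate point is to arrange the split between the flat regime~(iii) and the curved regime~(ii) — and to track the logarithmic losses across it — so that only the sharp weight $\jap{\log_+\ld}$, rather than $\jap{\log_+\tau}$, is paid; this is exactly what makes the hypothesis $A\in W^{s,3}$ (used when~\eqref{agradestimate} is later summed over the shells $\ld$) sufficient rather than requiring $A\in L^{3+\epsilon}$.
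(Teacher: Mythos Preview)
Your overall architecture---bilinear characterization of $\tn{\cdot}$, modulation decomposition of $u$ and $v$, the slab localization forcing $P^{Ue_1}_{\ll\nu}A_\ld$ in place of $A_\ld$, and averaging in $U$ via~\eqref{pavg}---matches the paper. So does the reduction of $\tn{A_\ld\cd\na}$ to $\tau\tn{A_\ld}$ by splitting $\na u=\na Q_l u+\na Q_h u$, and the derivation of~\eqref{potentialestimate} from~\eqref{agradestimate} via the extra factor $\ld/\tau$ and~\eqref{loggain}.

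The genuine gap is in the low--low bilinear piece $\sum_{\mu\le\nu}B_{\mu,\nu}$ when the input modulation is large relative to the coefficient frequency, i.e.\ when $\ld^2\le\mu\tau$. Your regimes (ii)--(iii), built from~\eqref{l2estimate}, ``Bernstein in the characteristic directions'', and~\eqref{dualbandstrichartz}, do not close there: combining band Strichartz on $Q_\mu u$, $L^2$ on $Q_\nu v$, and the slab averaging~\eqref{pavg} yields
\[
\E^3[B_{\mu,\nu}]\ll(\mu/\nu)^{1/3}(\tau/\nu)^{1/6}\n{A_\ld}_3\,a_\mu b_\nu,
\]
and the sup over $\nu$ is $(\tau/\ld)^{1/6}\n{A_\ld}_3$, a polynomial loss rather than the logarithmic one claimed. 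The missing ingredient is a \emph{sector decomposition}: the paper partitions the $\mu$-neighborhood of the circle $\Si_\z$ into $M\sim(\tau/\mu)^{1/2}$ angular sectors $R_k$, each Fourier-supported in a box of dimensions $\sim\mu\times\mu\times(\mu\tau)^{1/2}$. Since $\ld\le(\mu\tau)^{1/2}$, multiplication by $A_\ld$ cannot move one sector to a non-adjacent one, so $\jap{A_\ld R_kQ_\mu u,R_jQ_\nu v}=0$ unless $\abs{j-k}\le C$; Cauchy--Schwarz over sectors together with anisotropic Bernstein on each box then gives
\[
B_{\mu,\nu}\ll(\mu/\nu)^{1/6}(\ld/\nu)^{1/3}\n{P^{Ue_1}_{\le 8\nu}A_\ld}_3\,a_\mu b_\nu,
\]
which \emph{is} summable. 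This almost-orthogonality is exactly what the paper means by ``when the frequency of $A$ is sufficiently low, the curvature of the characteristic set does not play an important role''; it is not the dual band Strichartz estimate, and your regime~(iii) description conflates the two.

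A related misattribution: the logarithm in~\eqref{agradestimate} does \emph{not} come from the Carleson--Sj\"olin bound~\eqref{qavg}---the paper uses only the flat averaging~\eqref{pavg} in this lemma. Both branches of $\min\{\jap{\log_+\ld},\jap{\log_+\tau/\ld}\}$ arise from the \emph{same} sum: after the sector argument one averages $\sum_{\nu}(\ld/\nu)\n{P^{Ue_1}_{\le 8\nu}A_\ld}_3^3$ over $\max\{1,\ld^2/\tau\}\le\nu\le\ld$, and the number of dyadic $\nu$ in that range is exactly $\min\{\log_+\ld,\log_+(\tau/\ld)\}$.
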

\begin{proof}
It is convenient to use a bilinear characterization of the $\tn{\cd}$ norm
\[\tn{A} = \sup\{\a{\j{A  u,v}}: \n{u}_{X^{1/2}_\z}=\n{v}_{X^{1/2}_\z}=1\}.\]
Note that 
\[\j{A u, v} = \j{A e^{-i v \cd x} u,e^{-i v \cd x} v},\]
and that the $X^{1/2}_\z$ spaces have the modulation invariance
\[\n{e^{-i v \cd x} u}_{X^{1/2}_\z} \sim \n{u}_{X^{1/2}_{\z + i v}}.\]
Thus we may as well assume that $\eta$ is zero.

Decompose $u$ and $v$ into low and high modulation parts:
\begin{align*}
\j{A u, v} & = \j{A Q_h u, v} + \j{A Q_l u, Q_h v} + \j{A Q_l u, Q_l v}.
\end{align*}
The terms with $Q_h$ can be estimated by the high-modulation estimate~\eqref{hmestimate} and the Strichartz estimate~\eqref{strichartz}. For example,
\begin{align*}
\tau \a{\j{A Q_h u,v}} & \ll\tau \n{A}_3 \n{Q_h u}_2\n{v}_6 \\
&\ll \n{A}_3 \n{Q_h u}_{X^{1/2}_\z} \n{v}_{X_\z^{1/2}} 
\end{align*}

It remains to estimate the low modulation terms. Write
\begin{align}
\label{qsum}
\j{A_\ld \cd Q_l u, Q_l v} & = \sum_{\tau/8\geq\mu,\nu\geq 1}\int A_\ld \cd Q_\mu u \cd \bar{Q_\nu v}\,dx ,
\end{align}
where we make the notational convention that $Q_1 = Q_{\leq 1}$. 
Note that when $\ld\geq 100\tau$ these terms are all zero, so for the high-frequency estimate~\eqref{ahf} there is nothing left to prove.

Set
\begin{align*}
a_\mu & = \n{Q_\mu u}_{X^{-1/2}_\z}\\
b_\mu & = \n{Q_\mu v}_{X^{-1/2}_\z},
\end{align*}
and
\[B_{\mu, \nu} = \tau \as{\int  A_\ld \cd Q_\mu u \cd \bar{Q_\nu v}\,dx}.\]
We claim that 
\begin{equation}
\label{abimpliesb}
\sum_\mu (a_\mu^2 +  b_\mu^2)  \ll 1 \text{ implies that } \sum_{\mu, \nu} B_{\mu, \nu} \ll Z(U),
\end{equation}
where
\[\E^3[Z(U)] \ll \n{A_\ld}_{L^3}.\]
By symmetry, it suffices to treat the terms where $\mu \leq \nu$. Since $Q_\mu u \cd \bar{Q_\nu v}$ has Fourier support in the set $\{\a{\xi_1} \leq 2\nu\}$, we have
\[\int  A_\ld \cd Q_\mu u \cd \bar{Q_\nu v}\,dx = \int P^{Ue_1}_{\leq 8\nu}  A_\ld \cd Q_\mu u \cd \bar{Q_\nu v}\,dx.\]
Suppose first that $\ld^2 > \mu\tau$. In this case we use H\"older's inequality and estimate $Q_\mu u$ by using the Strichartz estimate~\eqref{bandstrichartz}:
\begin{align*}
B_{\mu,\nu} & \ll\tau \n{P^{Ue_1}_{\leq 8\nu} A_\ld}_3 \n{Q_\mu u}_6 \n{Q_\nu v}_2\\
&\ll \n{P^{Ue_1}_{\leq 8\nu} A_\ld}_3 \tau (\mu/\tau)^{1/3} (\nu \tau)^{-1/2} a_\mu b_\nu.
\end{align*}
By Young's inequality, we have
\[\sum_{\mu\leq \nu} (\mu/\nu)^{1/12} a_\mu b_\nu \ll 1,\]
so that the sum over $\ld^2 > \mu\tau$ is bounded by
\begin{align*}
\sum_{\substack{\mu \leq \nu\\\ld^2 > \mu\tau}} B_{\mu,\nu} &\ll \sup_{\substack{\nu \geq \mu\\ \ld^2 > \mu \tau}} (\ld/\nu)^{1/3} (\mu/\nu)^{1/12}(\mu \tau/\ld^2)^{1/6} \n{P^{Ue_1}_{\leq 8\nu} A_\ld}_3\\
&\ll  Z_1(U) + Z_2 (U)+ \n{A_\ld}_3,
\end{align*}
where $Z_1(U)$ and $Z_2(U)$ are given by
\begin{align*}
\label{f1}
Z_1(U)^3 &= \sum_{\max\{1, \ld^2/\tau\} \leq \nu \leq \ld} (\ld/\nu)(\ld^2/\nu\tau)^{1/12}  \n{P^{Ue_1}_{\leq 8\nu} A_\ld}^3_3\\
\notag
Z_2(U)^3 & = \sum_{\nu \leq \ld^2/\tau} (\ld/\nu) (\nu \tau/\ld^2)^{1/2} \n{P^{U e_1}_{\leq 8\nu} A_\ld}_3^3.
\end{align*}
Now we check that $Z_1$ and $Z_2$ are bounded on average by applying the averaging estimate~\eqref{pavg}.
\begin{align*}
\E^3[Z_1(U)]^3 & \ll  \sum_{\max\{1, \ld^2/\tau\} \leq \nu \leq \ld}(\ld^2/\nu\tau)^{1/12} \n{A_\ld}_3^3 \\
& \ll \n{A_\ld}_3^3\\
\E^3[Z_2(U)]^3 & \ll \sum_{\nu \leq \ld^2/\tau} (\nu \tau/\ld)^{1/2}/\ld\n{A_\ld}_3^3 \\
& \ll \n{A_\ld}_3^3
\end{align*}

Next, we treat the case $\ld \leq (\mu\tau)^{1/2}$. We subdivide the set 
\[E_l = \{\xi: d(\xi,\Si_\z) \leq \tau/8\}\]
into $M=\floor{(\tau/\mu)^{1/2}}$ sectors $S_k$, defined for $k=0,\dotsc,M-1$ by
\[S_k = E_l \cap \{(\xi_1, r \cos \th, r \sin \th): \th \in (2\pi/M)[k, k+1), r \in \R_+\}.\]
Let $R_k$ be Fourier projection onto $S_k$. The distance between two points in $E_l$ is bounded below by $\tau \th$, where $\th$ is the angular separation between the points. Thus for any two sectors $S_j$ and $S_k$, we have
\[d(S_j,S_k) \gg (\mu \tau)^{1/2} d_M(j,k),\]
where $d_M(j,k) = \min \{\a{j-k},M-\a{j-k}\}$. Since $A_\ld \cd R_k f$ has Fourier support in the set $\{S_k + B(0, 2\ld)\}$, we find that the inner product $\j{A_\ld \cd R_k f, R_j g}$ vanishes unless $\a{d_M(j,k)} \leq C$, so that
\[B_{\mu,\nu} \ll \tau \sum_{d_M(j,k) \leq C} \a{\j{P^{Ue_1}_{\leq 8\nu}A_\ld \cd R_k Q_\mu u, R_j Q_\nu v}}.\]
The Fourier support of $R_k Q_\mu u$ is contained in a rectangle of size proportional to $\mu^{1/2}\tau^{1/2}\times \mu \times \mu$. Thus, applying H\"older and Bernstein in each direction separately, we obtain
\begin{align*}
\n{P^1_{\leq 8\nu}A_\ld\cd R_k Q_\mu u}_{L^2} & \ll \n{P^{Ue_1}_{\leq 8\nu}A_\ld}_{L^\infty L^3 L^3} \n{R_k Q_\mu u}_{L^2 L^6 L^6}\\
&\ll \ld^{1/3} \mu^{2/3} \n{P^{Ue_1}_{\leq 8\nu}A_\ld}_{L^3} \n{R_k Q_\mu u}_{L^2}.
\end{align*}
Now apply Cauchy-Schwarz to the sum over $j$ and $k$. This gives
\begin{align*}
B_{\mu,\nu}& \ll \ld^{1/3} \mu^{2/3} \n{P^{Ue_1}_{\leq 8\nu} A_\ld}_3 \sps{\sum_k \n{R_k Q_\mu u}_2^2}^{1/2} \sps{\sum_k \n{R_k Q_\nu v}^2_2}^{1/2}\\
&\ll \tau \mu^{2/3} \ld^{1/3}\n{P^{Ue_1}_{\leq 8\nu} A_\ld}_{L^3} \n{Q_\mu u}_{L^2}\n{Q_\nu v}_{L^2}\\
& \ll (\mu/\nu)^{1/6} (\ld/\nu)^{1/3}a_\mu b_\nu \n{ P^{Ue_1}_{\leq 8\nu} A_\ld}_{L^3} 
\end{align*}
Thus
\begin{align*}
\sum_{\substack{\ld^2 \leq \mu \tau\\\mu \leq \nu}} B_{\mu,\nu}& \ll \sup_{\nu \geq \max\{\ld^2/\tau, 1\}} {(\ld/\nu)^{1/3} \n{P^{Ue_1}_{\leq 8\nu} A_\ld}_3}\\
&\ll Z_3 + \n{A_\ld}_3,
\end{align*}
where $Z_3$ is given by
\[Z_3(U)^3 = \sum_{\max\{\ld^2/\tau,1\} \leq \nu \leq \ld} (\ld/\nu)^{1/3} \n{P^{Ue_1}_{\leq 8\nu}A_\ld}_3.\]
Applying the averaging estimate~\eqref{pavg}, we have
\begin{align*}
\E^3[Z_3(U)]^3& \ll  \sum_{\max\{\ld^2/\tau,1\} \leq \nu \leq \ld} \n{A_\ld}_3\\
&\ll \min\{\j{\log_+ \ld}, \j{\log_+ \tau/\ld}\}^{1/3} \n{A_\ld}_3.
\end{align*}
This proves the claim~\eqref{abimpliesb}, which shows that 
\[\E^3[\tau \tn{A_\ld}_{\tau,U }] \ll \min\{\j{\log_+ \ld}, \j {\log_+ \tau/\ld}\}^{1/3} \n{A_\ld}_{3}.\]
The estimate for $\tn{A\cd \na}$ now follows, since
\begin{align*}
\n{A \cd \na u}_{X^{-1/2}_\z} & \ll \tn{A} \n{\na Q_l u}_{ X^{1/2}_\z} + \n{A\cd \na Q_h u}_{X^{-1/2}_\z}.
\end{align*}
Since $Q_l$ localizes to frequencies $\a \xi \ll \tau$, we can estimate the first term using the bound 
\[ \n{\na Q_l u}_{X^{1/2}_\z} \ll \tau \n{u}_{X^{1/2}_\z}.\]
For second term we apply the Strichartz estimate~\eqref{dualstrichartz} and the high-modulation estimate~\eqref{hmestimate}
\begin{align*}
\n{A \cd \na Q_h u}_{X^{-1/2}_\z} & \ll \n{A \cd \na Q_h u}_{L^{6/5}} \\
&\ll \n{A}_3 \n{\na Q_h u}_{L^2}\\
&\ll \n{A}_3 \n{u}_{X^{1/2}_\z}.
\end{align*}
Thus we have
\[\E^3[\tn{A_\ld \cd \na}] \ll \E^3[\tau \tn {A_\ld}] + \n{A_\ld}_3 \ll \j{\log_+ \tau/\ld}^{1/3} \n{A_\ld}_3.\]

Finally we derive the estimate~\eqref{potentialestimate} for $\tn{q}$. By Bernstein's inequality, we can control $\tn{q_{\leq 1}}$ by 
\begin{align*}
\tn{q_{< 1}} &\ll \tau^{-1} \n{q_{\leq 1}}_\infty \\
&\ll \tau^{-1} \n{q}_{W^{-1,n}}.
\end{align*}
On the other hand, the estimate~\eqref{agradestimate} for $\tau \tn{A_\ld}$ applied to $q_\ld$ gives
\begin{align*}
\E^3[\tn{q_{[1,100\tau]}}] & \ll \sum_{1 \leq \ld \leq 100 \tau} (\ld/\tau) \j{\log_+ \tau/\ld}^{1/3} \n{q_\ld}_{W^{-1,3}} \\
&\ll \sum_{1 \leq \ld \leq 100\tau} (\ld/\tau)^{1-\e} \n{q_\ld}_{W^{-1,3}}.
\end{align*}
Thus the frequency convolution estimate~\eqref{loggain} gives
\[\ti\E_K^3[\tn{q_{[1,100\tau]}}] \ll (\log K)^{-1/3} \n{q}_{W^{-1,3}}.\]
Together with the high-frequency estimate~\eqref{ahf}, this gives the desired bound~\eqref{potentialestimate} for $\tn{q}$.
\end{proof}

\section{Proof of the main theorem}
\label{proof}
We will need the following Poincar\'e lemma:
\begin{lemma}
\label{poincare}
Suppose that $A \in L^3(\R^3)$ is compactly supported, and that 
\[\curl A = 0.\]
Then there exists $\psi \in W^{1,3}(\R^3)$ supported in $\supp A$ such that 
\[\na \psi = A.\]
\end{lemma}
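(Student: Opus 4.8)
The plan is to build $\psi$ as an $L^3$-limit of classical potentials for mollified versions of $A$, using the compact support to get around the failure of the Sobolev embedding $W^{1,3}(\R^3)\into L^\infty$.

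First I would fix an open ball $B_R$ with $\supp A\ssub B_R$, let $\rho_\e$ be a standard mollifier supported in $B(0,\e)$, and set $A^\e=A*\rho_\e$. Then $A^\e\in C_0^\infty(\R^3)$, $\curl A^\e=(\curl A)*\rho_\e=0$, $A^\e\to A$ in $L^3$, and $\supp A^\e\sub B_{2R}$ once $\e<R$. By the classical Poincar\'e lemma on the simply connected set $\R^3$ there is a smooth $\psi^\e$ with $\na\psi^\e=A^\e$, unique up to an additive constant; since $A^\e$ vanishes on the connected open set $\R^3\sm\bar B_{2R}$, the function $\psi^\e$ is constant there, and after subtracting that constant we may assume $\psi^\e\equiv 0$ on $\R^3\sm\bar B_{2R}$. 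Thus every $\psi^\e$ (for $\e$ small) is smooth and supported in $\bar B_{2R}$.

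Next I would show that $(\psi^\e)$ is Cauchy in $L^3$. Each difference $\psi^\e-\psi^{\e'}$ is supported in $\bar B_{2R}$, so the elementary Poincar\'e inequality
\[
\n{f}_{L^3(\R^3)}\ll_R\n{\na f}_{L^3(\R^3)}\qquad(\supp f\sub\bar B_{2R}),
\]
proved by writing $f(x)=\int_{-\infty}^{x_1}\pd_1 f\,dt$ (a bounded integral) and applying H\"older in the $x_1$-variable on the bounded support, yields $\n{\psi^\e-\psi^{\e'}}_{L^3}\ll_R\n{A^\e-A^{\e'}}_{L^3}$. Since $(A^\e)$ is Cauchy in $L^3$, so is $(\psi^\e)$; let $\psi=\lim\psi^\e\in L^3(\R^3)$, which is supported in $\bar B_{2R}$. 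Passing to the limit in $\na\psi^\e=A^\e$ in the sense of distributions gives $\na\psi=A\in L^3$, so $\psi\in W^{1,3}(\R^3)$.

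It remains to sharpen the support. Since $\na\psi=A$ vanishes a.e.\ on the open set $\R^3\sm\supp A$, the representative $\psi$ is (a.e.) constant on each connected component of that set, and on the unbounded component it equals the limit of $\psi^\e$, namely $0$. Hence $\psi$ vanishes off $\supp A$ together with the bounded components of its complement; this set lies in the closed convex hull of $\supp A$ (a point outside the convex hull can be joined to infinity by a ray missing $\supp A$), so $\psi$ is supported in any ball containing $\supp A$, which is all that is used afterward, and when $\R^3\sm\supp A$ is connected it equals $\supp A$ exactly. The one genuine obstacle is the borderline nature of $W^{1,3}$ in three dimensions: one cannot control $\psi^\e$ in $L^\infty$ or recover $\psi$ as a uniform limit, which is why the argument must be run in $L^3$ and why it is essential to normalize all the $\psi^\e$ to vanish on a common neighborhood of infinity before invoking the Poincar\'e inequality. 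Everything else is routine.
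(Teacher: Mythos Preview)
Your argument is correct and takes a genuinely different route from the paper. The paper constructs $\psi$ explicitly as a Newtonian-type potential, setting $\psi = c\,\partial_i K * A_i$ with $K$ the fundamental solution of the Laplacian, and then uses the identity $\Delta = \grad\circ\div - \curl\circ\curl$ together with the maximum principle to verify $\nabla\psi = A$. Your approach instead mollifies, invokes the smooth Poincar\'e lemma, and passes to the limit via the Poincar\'e inequality on a fixed ball. Your method is more elementary in that it avoids any potential theory, while the paper's method gives an explicit formula for $\psi$ and makes the decay at infinity immediate without needing to normalize.

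You also correctly flag a point the paper glosses over: if $\R^3\setminus\supp A$ has bounded components, neither argument forces $\psi$ to vanish there (take $A=\nabla\phi$ for a radial bump $\phi$), so strictly speaking one only gets $\supp\psi$ inside the complement of the unbounded component of $\R^3\setminus\supp A$. As you note, the application only needs $\psi$ supported in $\tfrac12 B$, which both arguments deliver since $\supp A\subset\tfrac12 B$ is convex.
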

\begin{proof}
If $\psi$ exists then
\[\lp \psi = \div(\na \psi) = \div(A).\]
Thus we set
\[\psi (x) = c \int \sum_i \pd_i K(y) A_i(x-y)\,dy,\]
where $K(y) \sim \a{y}^{-1}$ is the fundamental solution of the Laplacian. Since $\a{\pd_i K} \ll \a{y}^{-2}$, the kernel is locally integrable. By Young's inequality and the fact that $A$ is compactly supported, we have $\psi \in L^3_{\loc}$. Furthermore, the function $\psi$ is smooth away from $\supp A$ and decays to zero at infinity. The vector Laplacian is given by 
\[\lp = \grad \cp \div - \curl \cp \curl.\]
Since $\curl A$ and $\curl \psi$ are both zero, we have
\begin{align*}
\lp (A - \na \psi) = \na (\div A - \lp \psi)=0.
\end{align*}
Since $A - \na \psi$ vanishes at infinity, this implies, by the maximum principle, that $\na \psi = A$. In particular, $\na \psi = 0$ away from $\supp A$, and since $\psi$ decays at infinity we conclude that $\psi = 0$ away from $\supp A$.
\end{proof}

\thetheorem*
\begin{proof}
Let $B = B(0,1)$. We construct solutions in $H^1(B)$ to the Schr\"odinger equation $L_{A,q} u = 0$ of the form
\begin{equation}
\label{cgosolution}
u  =  e^{x\cd \ti \z}(a + \psi).
\end{equation}
Let $\chi,\ti \chi \in C^\infty_0(B)$ be cutoff functions satisfying $\chi = \ti \chi = 1$ in $\f 1 2 B$ and $\chi \ti \chi = \ti \chi$. We construct $\psi$ by solving the equation
\begin{equation}
L_{A,q,\ti \z} \psi = \ti \chi (F+G),
\end{equation}
where
\begin{equation}
\label{laqzdef}
L_{A,q,\ti z} = -\lp_\z - 2 i \z \cd A + D\cd A + 2 A \cd D + q ,
\end{equation}
\begin{equation}
\label{Fdef}
F =  -\lp (\chi a)  + (D \cd A) a + 2 A \cd Da +  A^2a + qa,
\end{equation}
and
\begin{equation}
\label{Gdef}
G = -2\ti \z \cd \na a - 2 i\ti \z \cd A .
\end{equation}
In order to eliminate the terms of order $\tau$ in $G$, we let $a = e^{-i\phi_\z}$, where
\begin{equation}
\label{phidef}
\phi_\z = \pb_\z^{-1}(\chi \z\cd A_{\leq 100\tau}),
\end{equation}
where $\z \in \C^n$ will be chosen such that $\a{\z - \ti \z} = O(1)$. With this choice of $\phi$, the function $G$ satisfies 
\begin{align*}
\ti \chi G & = \ti \chi( -2i\z\cd A_{>100\tau} a - (\ti \z- \z) \cd (2\na a + 2 i Aa)).
\end{align*}

We choose the parameters $\z_i,\ti \z_i$ as follows: Fix a radius $r\in [1,2]$ and an orthonormal frame $\{e_1,e_2, e_3\}$, and define 
\begin{align*}
\z_1(\tau,U) & = \tau U(e_1 + i e_2)\\
\z_2(\tau, U) & = -\z_1(\tau, U)\\
\ti \z_1 (\tau,U) &:= \tau Ue_1 + i \sps{\ff r 2 Ue_3 + \sqrt{\tau^2 - r^2}Ue_2}\\
\ti \z_2 (\tau,U) &:= -\tau U e_1 + i\sps{\ff r 2 Ue_3 - \sqrt{\tau^2 - r^2}Ue_2}
\end{align*}
Note that $\a{\z_i - \ti \z_i} \ll 1$. In particular, the spaces $X^b_{\z_i}$ and $X^b_{\ti \z_i}$ have equivalent norms.

Let $\ti A_1 = A_1$ and $\ti A_2 = -A_2$, define $F_i,G_i,\phi_{i,\z}$ as in~\eqref{Fdef},~\eqref{Gdef},~\eqref{phidef} by replacing $A$ with $\ti A_i$. 
Let
\begin{align*}
M &= 1 + \sum_i (\n{A_i}_{W^{s,3}} + \n{q_i}_{W^{-1,3}})\\
Z_0 &= 1 + \sum_i \Big(\sum_{k=0,1,2} \tau^{-k} \n{\na^k \phi_{i,\z_i}}_\infty + \n{\na \phi_{i,\z_i}}_{L^2(B)}\\
&\qq +\n{A_i}_{L^\infty L^2_e} + \n{\j{\na_1}^{-1/2+s} \j{\na_2}^{-1/2+s}\na A_i}_2 + \n{\j{\na_{1,2}}^{-1+s} \na A_i}_{2}  \Big)\\
Z_1 &= \sum_{i,l,m} (\tau \n{A_i}_{X^{1/2}_{\z_j}\to X^{-1/2}_{\z_j}}+\n{q_{l,m}}_{X^{1/2}_{\z_j}\to X^{-1/2}_{\z_j}})\\
Z_2 & = \sum_{i,j,l,m} \n{\chi a_i q_{j,l}}_{X^{-1/2}_{\z_m}}  .
\end{align*}
Here the $q_{j,l}$ are all the terms that are bounded in $W^{-1,3}$, namely
\begin{align*}
q_{j,1} & = A_j^2\\
q_{j,2} & = q_j\\
q_{j,3} & = \tau P_{>100\tau} A_{j}\\
q_{j,4} & = \na A_j.
\end{align*}

By the localization estimate~\eqref{loc} and the fact that $\a{\z - \ti \z} \ll 1$, we have
\[\n{L_{\ti A_i,q,\ti\z_i} + \lp_{\ti\z_i}}_{\dot X^{1/2}_{\ti\z_i} \to\dot X^{-1/2}_{\ti\z_i}} \ll Z_1,\]
If $Z_1$ is sufficiently small, then by the contraction mapping principle there are $u_i = e^{x\cd \ti \z_i} (e^{-i\phi_i} + \psi_i)$ solving $L_{\ti A_i,q_i} u_i = 0$ in $\Om$ such that
\[\n{\psi_i}_{\dot X^{1/2}_{\ti \z_i}} \ll \n{\ti \chi(F_i+G_i)}_{X^{-1/2}_{\z_i}}.\]
By Lemma~\ref{lpchia} and the Strichartz estimate~\eqref{dualstrichartz}, we have
\begin{align*}
\n{F_i}_{X^{-1/2}_{\z_i}} & \ll Z_2^4 + Z_2 + \n{A_i}_3 \n{\na a_i}_{L^2(B)} \\
&\ll Z_2^4 + Z_2 + M e^{Z_0} Z_0.
\end{align*}
On the other hand we can simply estimate $\ti \chi G$ by
\begin{align*}
\n{\ti \chi G_i}_{X^{-1/2}_{\z_i}} & \ll Z_2 +  e^{Z_0}(Z_0 + M).\
\end{align*}
Thus we have
\[\n{\psi_i}_{\dot X^{1/2}_{\ti \z_i}} \ll g(M, Z_0,Z_2),\]
where $g$ is continuous.

Now we apply the integral identity~\eqref{weakmagnetic} to the solutions $u_1$ and $u_2$ to obtain
\begin{align*}
0&=\int [i(A_1-A_2)\cd (u_1 \na  u_2 - u_2 \na  u_1) + (A_1^2 -A_2^2 + q_1-q_2)u_1  u_2]\,dx \\
&= I + II + III + IV ,
\end{align*}
where
\begin{align*}
I & =i(\z_2-\z_1) \int \chi (A_1-A_2)_{\leq 100\tau} e^{-i(\phi_1+\phi_2)} e^{i k\cd x} \,dx \\
II &= \int [(\z - \ti \z)\chi A_{\leq 100\tau} + \chi \ti \z A_{> 100 \tau} + A \na \phi + \chi q] e^{-i \phi} e^{ik\cd x} \,dx\\
III & = \int [\ti \z A a \psi + A \na a \psi + A a \na \psi + \chi q a \psi]e^{ik \cd x}\,dx\\
IV & = \int q \psi_1 \psi_2 e^{ik \cd x}\,dx.
\end{align*}
Here $k = rUe_3$. The error terms $II$-$IV$ are in schematic form. For example, the notation $\int \chi qa \psi e^{ik\cd x}$ represents a linear combination of terms $\int\chi q_{l,m} a_i \psi_j e^{ik \cd x}$.

The first expression $I$ contains the main term. We remove the exponential factor $e^{-i(\phi_1+\phi_2)}$ using Lemma~\ref{undophase}. We use the Littlewood-Paley commutator estimate $\n{[\chi,P_{\leq 100\tau}]}_{L^1\to L^1} \ll \tau^{-1}$ to control some of the errors.
\begin{align*}
I & =i( \z_2-\z_1)\cd (\chi (A_1 - A_2)_{\leq 100\tau})^\we (k)\\
&=i( \z_2-\z_1)\cd(P_{\leq 100 \tau}(A_1-A_2) + [\chi, P_{\leq 100 \tau}] (A_1-A_2))^\we (k) \\
&= i( \z_2-\z_1)\cd (A_1 - A_2)^\we (k) + O(\n{A_1-A_2}_1).
\end{align*}
Next we estimate the terms in $II$. 
\begin{align*}
\a{II} & \ll \n{aA}_{L^1} + \tau \n{A_{> 100\tau}}_{H^{-1}} \n{\chi a}_{H^1} + \n{A}_2 \n{\chi a \na \phi}_2 + \n{q}_{H^{-1}} \n{\chi a}_{H^1}\\
&\ll e^{Z_0}Z_0 M .
\end{align*}
For the terms in $III$ we use~\eqref{loc}, the duality between $\dot X^{1/2}_\z$ and $\dot X^{-1/2}_\z$, the Strichartz estimate~\eqref{dualstrichartz}, and the fact that multiplication by $e^{ik\cd x}$ is bounded in $X^{1/2}_\z$:
\begin{align*}
\a{III} & \ll (\tau \n{Aa}_{X^{-1/2}_\z} +\n{A \na a}_{X^{-1/2}_\z}+ \n{\na A \chi a}_{X^{-1/2}_\z}  + \n{q \chi a}_{X^{-1/2}_\z}) \n{\psi}_{X^{1/2}_\z} \\
&\ll (\tau^{1/2} \n{A}_2 e^{Z_0} + \n{A}_3 \n{\na a}_{L^2(B)} + Z_2) g(M,Z_0,Z_2) \\
&\ll (\tau^{1/2} Me^{Z_0} + M e^{Z_0} Z_0 + Z_2)g(M,Z_0,Z_2)
\end{align*}
For $IV$, we estimate by $\n{q}_{X^{1/2}_\z\to X^{-1/2}_\z}\n{\psi_1}_{X^{1/2}_\z} \n{\psi_2}_{X^{1/2}_\z}$. Thus
\[\a{IV} \ll Z_1 g(M, Z_0,Z_2)^2.\]
Combining all these estimates, we obtain
\begin{equation}
\label{tozero}
\a{i( \z_2-\z_1)\cd (A_1 - A_2)^\we (k) } \leq \tau^{1/2} f(M,Z_0,Z_1,Z_2),
\end{equation}
where $f$ is continuous. To conclude, we must select $\z$ such that all of the constants $Z_i$ are bounded uniformly in $\z$. 

First, we note that by Lemma~\ref{avgestimates}, Lemma~\ref{h1}, Lemma~\ref{phili} and Lemma~\ref{lil2}, we have
\[\E[Z_0\mid SO(3)] \leq C \e.\]
Define the set $V$ by
\[V = \{U \in SO(3): Z_0 < 2 C \e\},\]
By Chebyshev's inequality, we have $\Pr[V] \geq \ff 12$. Similarly, Lemma~\ref{solvability} implies that
\[\limsup_{K \to \infty} \ti \E_K[Z_1\mid V] \leq C \e.\]
Lemma~\ref{aq} implies that
\[\ti\E_K[Z_2 \mid [2^K,2^{K^2}]\times V] \leq h(Z_0,M), \]
where $h$ is continuous. Thus for sufficiently large $K$, it follows that the inequality
\[(2C \e)^{-1} Z_1 + (2h(Z_0,M))^{-1} Z_2 \leq 2\]
holds on a set $\ti V_K \subset [2^K,2^{K^2}]\times V$ with $\ti \Pr_K[\ti V_K] \geq \ff 1 4$. By choosing $\e$ small, we can ensure that $Z_1$ is sufficiently small on $\ti V_K$ that we can use the contraction mapping principle to construct $\psi_i$ as above. Furthermore, for $(\tau,U) \in \ti V_K$, the quantities $Z_i$ are all bounded independently of $\tau, U, K$. Let
\[J(r, U) = U(e_1 + i e_2) \cd (A_1 - A_2)^\we(r U e_3).\]
By~\eqref{tozero}, we have $\a{J(r,U)} \ll \tau^{-1/2}$ for all $(\tau,U)$ in the set $\ti V_K$. Integrating this inequality over all $(\tau, U)$ in $\ti V_K$ and $r$ in $[1,2]$, we have
\[\int_{SO(3)}\int_1^2 \int_{[2^K,2^{K^2}]} 1_{\ti V_K} \a{J(r, U)} (\log K)^{-1} (\tau \log \tau)^{-1} d\tau\,dr\,d\si(U) = O(e^{-K/2})\]
Let 
\[\eta_K = \int_{[2^K,2^{K^2}]} 1_{\ti V_K}\,(\log K)^{-1} (\tau \log \tau)^{-1} d\tau,\]
and note that 
\[\int_{SO(3)}\int_1^2 \eta_{\ti V_K}\,dr \,d\si(U) \sim \ti \Pr_K[\ti V_K] \geq \ff 1 4.\]
By the Banach-Alaoglu theorem, there is a sequence $K_i\to \infty$ and a function $\eta \in L^\infty(SO(3) \times [1,2])$ such that $\eta_{K_i} \wc \eta$. Since $\int \eta = \lim \int \eta_{K_i} \geq \f 1 4$, it is clear that $\eta \neq 0$. On the other hand,
\[\int \eta(r,U) \a{J(r,U)}\,dr \,d\si(U) = \lim_{i \to \infty} \int \eta_{K_i}(r,U)\a{J(r,U)}\,dr \,d\si(U)=0.\]
It follows that $J(r,U)$ vanishes on a set of positive measure. But $A_1 - A_2$ is a compactly supported function, which implies that $J(r,U)$ is analytic in $r$ and $U$. Thus we can conclude that $J(r,U) = 0$ in $\R_+\times SO(3)$. By replacing $SO(3)$ by its complement throughout the argument, we find that $J(r,U) = 0$ in $\R_+ \times (O(3)\sm SO(3))$ as well. Let $H = A_1 - A_2$. Since $J(r,U)$ vanishes uniformly, we must have $v \cd \hat H(k)=0$ whenever $v \cd k = 0$. In particular, $0 = (w \times k) \cd \hat H(k) = (\curl H)^\we(k) \cd w$ for any $w,k\in \R^n$, so $\curl H = 0$.

By Lemma~\ref{poincare}, there is a gauge transformation $\psi$ such that $A_2 = A_1 + \na \psi$, which implies that $\Ld_{A_1,q_1} =  \Ld_{A_2,q_2} = \Ld_{A_1,q_2}$. We can repeat the whole argument to obtain
\begin{align*}
0 & = \int ((q_1-q_2)  e^{i k\cd x} + \chi q a \psi e^{ik\cd x} + \chi q \psi_1 \psi_2 e^{i k\cd x})\,dx.
\end{align*}
Since $\ti E_K[\n{\chi a q}_{X^{-1/2}_\z} + \n{q}_{X^{1/2}_\z \to X^{-1/2}_\z}\mid \ti V_K] \to 0$ as $K \to \infty$, we can repeat the arguments above to show that that $(q_1 -q_2)^\we(k) = 0$ for all $k$. It follows that $q_1=q_2$.
\end{proof}

\section*{Acknowledgments}
The author would like to thank his PhD advisor Daniel Tataru for suggesting the problem, for his patient guidance and encouragement, and for many helpful suggestions. He would also like to thank Mikko Salo, Gunther Uhlmann, Russell Brown, Michael Christ and Herbert Koch for taking the time to read various versions of the manuscript and for many interesting discussions about the problem.

\bibliography{/home/boaz/Documents/library.bib}{}
\bibliographystyle{alphaabbr}
\end{document}